\newtheorem{introthm}{Theorem}
\newtheorem{introprop}[introthm]{Proposition}
\newtheorem{thm}{Theorem}[section]
\newtheorem{cor}[thm]{Corollary}
\newtheorem{lemma}[thm]{Lemma}
\newtheorem{prop}[thm]{Proposition}
\newtheorem{conj}[thm]{Conjecture}
\theoremstyle{definition}
\newtheorem{defi}[thm]{Definition}
\newtheorem{example}[thm]{Example}
\theoremstyle{remark}
\newtheorem{remark}[thm]{Remark}
\newlist{enums}{enumerate}{2}  
\setlist[enums,1]{label=\textup{(\alph*)}}
\setlist[enums,2]{label=\textup{(\roman*)}}
\renewcommand{\phi}{\varphi}
\renewcommand{\theta}{\vartheta}
\newcommand{\eps}{\varepsilon}
\renewcommand{\leq}{\leqslant}
\newcommand{\nbd}{\nobreakdash-\hspace{0pt}}  
\newcommand{\defemph}[1]{\textbf{#1}}
\newcommand{\rats}{\mathbb{Q}}   
\newcommand{\compl}{\mathbb{C}}  
\newcommand{\crp}[1]{\mathbb{#1}}     
\newcommand{\sym}[1]{\mathrm{S}_{#1}}
\DeclarePairedDelimiterX{\brcls}[3]{\llbracket}{\rrbracket}{#1,#2,#3}
\DeclarePairedDelimiter{\algcls}{\lbrack}{\rbrack}
\newcommand{\iso}{\cong}    
\newcommand{\nteq}{\trianglelefteq} 
\newcommand{\into}{\hookrightarrow}  
\newcommand{\tensor}{\otimes}
\newcommand{\bigtensor}{\bigotimes}
\newcommand{\oG}{\widehat{G}}
\newcommand{\oH}{\widehat{H}}
\DeclareMathOperator*{\bigdcup}{\mathaccent\cdot{\mathop{\bigcup}}}
\DeclarePairedDelimiter{\abs}{\lvert}{\rvert}
\DeclarePairedDelimiter{\erz}{\langle}{\rangle}
\DeclareMathOperator{\Ker}{Ker}        
\DeclareMathOperator{\Aut}{Aut}
\DeclareMathOperator{\Z}{\mathbf{Z}}
\DeclareMathOperator{\C}{\mathbf{C}}         
\DeclareMathOperator{\Gal}{Gal}  
\DeclareMathOperator{\Irr}{Irr}
\DeclareMathOperator{\Lin}{Lin}
\DeclareMathOperator{\mat}{\mathbf{M}}      
\DeclareMathOperator{\enmo}{End} 
\DeclareMathOperator{\Hom}{Hom}
\DeclareMathOperator{\Tr}{Tr}
\DeclareMathOperator{\SC}{\mathcal{SC}}
\DeclareMathOperator{\Br}{Br}
\DeclareMathOperator{\BrCliff}{BrCliff}
\DeclareMathOperator{\Cores}{Cores}
\DeclareMathOperator{\Res}{Res}
\begin{document}
\title[The Schur-Clifford subgroup]{The Schur-Clifford subgroup\\%
                           of the Brauer-Clifford group}
\author{Frieder Ladisch}
\address{Universität Rostock\\
         Institut für Mathematik\\
         Ulmenstr.~69, Haus~3\\
         18057 Rostock\\
         Germany}
\email{frieder.ladisch@uni-rostock.de}
\thanks{Author partially supported by the DFG (Project: SCHU 1503/6-1)}
\subjclass[2010]{Primary 20C15, Secondary 16K50}
\keywords{Brauer-Clifford group, Clifford theory,
          Character theory of finite groups, 
          Brauer group, Schur subgroup}

\begin{abstract}
  We define 
  a Schur-Clifford subgroup of Turull's Brauer-Clifford group,
  similar to the Schur subgroup of the Brauer group. 
  The Schur-Clifford subgroup 
  contains exactly the equivalence classes coming from the intended 
  application to Clifford theory of finite groups.
  We show that the Schur-Clifford subgroup is indeed a subgroup 
  of the Brauer-Clifford group, 
  as are certain naturally defined subsets.
  We also show that this Schur-Clifford subgroup behaves well 
  with respect to restriction and corestriction maps between 
  Brauer-Clifford groups.
\end{abstract}
\maketitle

\section{Introduction}
\label{sec:intro}
The Brauer-Clifford group has been introduced by 
Turull~\cite{turull09,turull11}
to handle Clifford theory in finite groups in a way that 
respects fields of values and other rationality properties 
of the characters involved.
%
%
Let $\kappa\colon \oG\to G$ be a surjective homomorphism
of finite groups with kernel $N\nteq \oG$. 
In other words, 
  \[ 
    \begin{tikzcd}
       1 \rar & N \rar[hook] & \oG \rar{\kappa} & G \rar & 1
    \end{tikzcd}
  \]
  is an exact sequence of groups.
Let $\theta\in \Irr N$, where $\Irr N$ denotes, as usual, the set of 
irreducible complex characters of the group $N$.
For convenient reference, we call the pair
$(\theta, \kappa)$ a
\defemph{Clifford pair}.
Note that $N$, $\oG$ and $G$ are determined by $(\theta,\kappa)$
as the kernel, the domain and the image 
of $\kappa$, respectively.
We will often consider different Clifford pairs which have
the group $G$ in common, while $\oG$ and $N$ vary.
To describe this conveniently,
we also say that $(\theta,\kappa)$ is a 
\defemph{Clifford pair over $G$} in the above situation.

Let $\crp{F}\subseteq \compl$ be a field.
Turull has shown how to associate a commutative
$\crp{F}$\nbd algebra 
$\Z(\theta,\kappa,\crp{F})$, on which $G$ acts, 
to a Clifford pair $(\theta,\kappa)$ and 
the field $\crp{F}$~\cite[Definition~7.1]{turull09}.
This is called the \defemph{center algebra} of
$(\theta,\kappa)$ over $\crp{F}$.
Then the Brauer-Clifford group
\[ \BrCliff( G, \Z(\theta,\kappa, \crp{F}))
\]
is defined
(We will review its precise definition below).
Moreover, Turull showed~\cite[Definition~7.7]{turull09} 
how to associate an element
\[\brcls{\theta}{\kappa}{\crp{F}}\in \BrCliff(G,\Z(\theta,\kappa,\crp{F}))
\]
to $(\theta,\kappa)$ and $\crp{F}$.
We call this the
\defemph{Brauer-Clifford class} 
of the pair $(\theta,\kappa)$ over $\crp{F}$.
This element determines in some sense the character theory 
of $\oG$ over $\theta$,
including fields of values and Schur indices over the field
$\crp{F}$~\cite[Theorem~7.12]{turull09}.

In this paper, we consider the subset of $\BrCliff(G,Z)$
that consists of elements that come from this construction.
To make this more precise, we need the fact that 
the Brauer-Clifford group is a covariant functor
in the second variable.
This means that if $\alpha\colon R \to S$ 
is a homomorphism of $G$-rings, then there is
a corresponding homomorphism of abelian groups
\[ \overline{\alpha}= \BrCliff(G,\alpha)\colon   
   \BrCliff(G,R)\to \BrCliff(G,S).
\]
If $\alpha$ happens to be an isomorphism, then
$\overline{\alpha}$ is an isomorphism, too.

For a fixed commutative $G$\nbd algebra $Z$ over the field $\crp{F}$,
we define
$ \SC^{(\crp{F})}(G,Z)$
as the set of all
$ \overline{\alpha}\brcls{\theta}{\kappa}{\crp{F}} $
such that $(\theta,\kappa)$ is a Clifford pair over $G$
and $\alpha\colon \Z(\theta,\kappa,\crp{F})\to Z$
is an isomorphism of $G$\nbd algebras.
In short,
\[ \SC^{(\crp{F})}(G,Z)
    := \{ \overline{\alpha}
          \brcls{\theta}{\kappa}{\crp{F}}
          \mid \Z(\theta,\kappa,\crp{F}) \iso Z
               \text{ via } \alpha 
      \}.
\]

Of course, for the wrong $Z$, it may happen that
$\SC^{(\crp{F})}(G,Z)$ is empty.
If not,
we call $\SC^{(\crp{F})}(G,Z)$ the
\defemph{Schur-Clifford subgroup} (over $\crp{F}$)
of the Brauer-Clifford group $\BrCliff(G,Z)$.
The main result of this paper justifies this name:
\begin{introthm}\label{t:main}
  If $Z\iso \Z(\theta,\kappa,\crp{F})$ for 
  some Clifford pair $(\theta,\kappa)$,
  then $\SC^{(\crp{F})}(G,Z)$ is a subgroup of
  $\BrCliff(G,Z)$.
\end{introthm}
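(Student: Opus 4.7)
The plan is to exploit the fact that $\BrCliff(G,Z)$ is abelian: since $\SC^{(\crp{F})}(G,Z)$ is assumed to be nonempty, it is enough to establish closure under multiplication and under inversion. Both properties should follow from explicit universal constructions on Clifford pairs, combined with the functoriality of $\BrCliff(G,-)$ in its second argument.

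For closure under products, given two Clifford pairs $(\theta_1,\kappa_1)$ and $(\theta_2,\kappa_2)$ over $G$ with $\kappa_i\colon \oG_i\to G$ and $N_i=\Ker\kappa_i$, I would form the fibre product $\oG_1\times_G\oG_2$ together with its natural projection $\kappa$ to $G$ (kernel $N_1\times N_2$), and take the outer tensor product $\theta=\theta_1\times\theta_2\in\Irr(N_1\times N_2)$. The natural expectation, which I would verify by unwinding Turull's construction, is that there is a canonical $G$-algebra isomorphism
\[
  \Z(\theta,\kappa,\crp{F}) \iso \Z(\theta_1,\kappa_1,\crp{F})\tensor_{\crp{F}}\Z(\theta_2,\kappa_2,\crp{F})
\]
under which $\brcls{\theta}{\kappa}{\crp{F}}$ corresponds to the external product of the two original Brauer-Clifford classes. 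Because $Z$ is commutative, multiplication $\mu\colon Z\tensor_{\crp{F}}Z\to Z$ is a $G$-algebra homomorphism, and the induced map $\overline{\mu}$ transforms that external product into the product in $\BrCliff(G,Z)$. Composing $\mu$ with $\alpha_1\tensor\alpha_2$ produces a single $G$-algebra isomorphism $\Z(\theta,\kappa,\crp{F})\iso Z$ that realizes the product $\overline{\alpha_1}\brcls{\theta_1}{\kappa_1}{\crp{F}}\cdot\overline{\alpha_2}\brcls{\theta_2}{\kappa_2}{\crp{F}}$ as a single element of $\SC^{(\crp{F})}(G,Z)$.

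For closure under inverses, the candidate is the complex-conjugate Clifford pair $(\overline{\theta},\kappa)$: passing to opposite algebras on the algebraic side corresponds to replacing $\theta$ by its dual on the character side. One expects a natural $G$-algebra isomorphism $\Z(\overline{\theta},\kappa,\crp{F})\iso \Z(\theta,\kappa,\crp{F})$ (the center is commutative, so there is no distinction between the algebra and its opposite) under which $\brcls{\overline{\theta}}{\kappa}{\crp{F}}=\brcls{\theta}{\kappa}{\crp{F}}^{-1}$. Composing with the given $\alpha\colon \Z(\theta,\kappa,\crp{F})\iso Z$ then exhibits the inverse as a member of $\SC^{(\crp{F})}(G,Z)$.

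The main obstacle is the first identification: rigorously matching the center algebra and Brauer-Clifford class of the fibre-product pair with the tensor product of the individual data in a fully $G$-equivariant way. This requires careful bookkeeping of the semi-invariants, endomorphism rings and trace maps that enter Turull's definitions of $\Z(\theta,\kappa,\crp{F})$ and $\brcls{\theta}{\kappa}{\crp{F}}$. Once that identification is in hand, covariance of $\BrCliff(G,-)$ and the fact that $Z$ is commutative do the rest, and the inverse step is entirely analogous, with $\crp{F}[\oG]$-modules replaced by their contragredients.
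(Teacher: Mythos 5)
Your overall strategy --- closure under products via the fibre-product pair $(\theta_1\times\theta_2,\kappa_1\times_G\kappa_2)$ and closure under inverses via $(\overline{\theta},\kappa)$ and opposite algebras (dual modules) --- is exactly the route the paper takes (Lemmas~\ref{l:scinv} and~\ref{l:scprod}), and your remark that nonemptiness plus these two closures suffices is fine. But the step your product argument leans on is false as stated: in general there is \emph{no} $G$-algebra isomorphism $\Z(\theta_1\times\theta_2,\kappa_1\times_G\kappa_2,\crp{F})\iso \Z(\theta_1,\kappa_1,\crp{F})\tensor_{\crp{F}}\Z(\theta_2,\kappa_2,\crp{F})$; the center algebra of the fibre-product pair is only a direct \emph{summand} of that tensor product (this is pointed out explicitly in the remark following Lemma~\ref{l:scprod}). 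Already for $G=1$ and $\theta_1=\theta_2=\theta$ with $\crp{F}(\theta)\neq\crp{F}$, the left-hand side is the field $\crp{F}(\theta)$, while $\crp{F}(\theta)\tensor_{\crp{F}}\crp{F}(\theta)$ is a product of $[\crp{F}(\theta):\crp{F}]$ copies of $\crp{F}(\theta)$, so the two cannot be isomorphic. Since your whole closure-under-products argument is routed through this identification (external product in $\BrCliff(G,Z\tensor_{\crp{F}}Z)$ followed by $\overline{\mu}$), this is a genuine gap at the central point, not mere bookkeeping to be filled in later.

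The repair is what the paper actually does. For $Z=\crp{E}$ a field, the hypothesis that \emph{both} center algebras are isomorphic to $\crp{E}$ as $G$-algebras forces $\crp{F}(\theta_1)=\crp{F}(\theta_2)=\crp{E}$ and, for each $g\in G$, a \emph{common} Galois automorphism $\alpha_g$ with $\theta_i^{g\alpha_g}=\theta_i$ for $i=1,2$; from this one checks directly that $\Z(\theta_1\times\theta_2,\kappa_1\times_G\kappa_2,\crp{F})\iso\crp{E}$ again, and the product of the two classes is realized not through a tensor product over $\crp{F}$ but through the module $V_1\tensor_{\crp{E}}V_2$, via $\enmo_{\crp{F}[N_1\times N_2]}(V_1\tensor_{\crp{E}}V_2)\iso S_1\tensor_{\crp{E}}S_2$. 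Note also that you argue with an arbitrary $Z$ throughout, whereas the paper first treats the case of a field and only then deduces the general case through the nontrivial reduction $\SC(G,Z)\iso\SC(H,Ze)$ of Theorem~\ref{t:reductofields}, whose surjectivity half requires a wreath-product construction. If you wish to bypass that reduction, you must in addition align the primitive idempotents (replace each $\theta_i$ by a suitable $G$-conjugate so that the stabilizers of the $e_{\theta_i,\crp{F}}$ agree with the stabilizer $H$ of a primitive idempotent of $Z$) and then show that the center algebra of the product pair is again the $G$-algebra induced from $Ze$; none of this appears in your sketch. The inverse step via $\overline{\theta}$ and contragredient modules is correct and coincides with the paper's Lemma~\ref{l:scinv}.
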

With respect to the Brauer-Clifford group,
the Schur-Clifford subgroup is the same
as is the Schur subgroup~\cite{Yamada74}
with respect to the Brauer group.

We will usually write $\SC(G,Z)$ instead of
$\SC^{(\crp{F})}(G,Z)$. 
This is justified by the following result:
\begin{introprop}\label{p:fieldindep}
  Let $Z$ be a $G$\nbd ring and assume
  that $Z\iso \Z(\theta,\kappa,\crp{F})$
  for some Clifford pair $(\theta,\kappa)$
  and some field\/ $\crp{F} \subseteq \compl $.
  Then $Z^G$, the centralizer of $G$ in $Z$,
  is a field.
  There is a unique subfield\/
  $\crp{K}\subseteq \compl$ with $\crp{F} \subseteq \crp{K}$
  such that $Z^G \iso \crp{K}$ as algebra over\/ $\crp{F}$.
  For this field $\crp{K}$, we have
  \[ \SC^{(\crp{F})}(G,Z)= \SC^{(\crp{K})}(G,Z).
  \]  
\end{introprop}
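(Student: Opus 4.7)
The plan is to unpack Turull's construction of $\Z(\theta,\kappa,\crp{F})$ enough to see that $Z^G$ is a field, identify it canonically as a subfield of $\compl$, and then use naturality of the center-algebra construction to show the two Schur-Clifford subgroups coincide.

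First I would inspect the structure of $\Z(\theta,\kappa,\crp{F})$ from~\cite[Definition~7.1]{turull09}: as a commutative semisimple $\crp{F}$\nbd algebra it decomposes as a finite product of fields of character values, and $G$ acts via a combination of $\oG$\nbd conjugation (permuting characters above $\theta$) with the Galois action. Because the input is a single Clifford pair, the action of $G\times \Gal(\compl/\crp{F})$ on the primitive idempotents is transitive, and $G$ has at most one orbit inside each Galois orbit. It follows that $Z^G$ is isomorphic, via projection onto any factor, to the subfield of $Ze$ fixed by the stabilizer $S\leq G$ of the idempotent $e$; in particular $Z^G$ is a field.

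Next, to pin down $\crp{K}$: each factor $Ze$ is canonically a subfield of $\compl$ (a field of values), so setting $\crp{K}:=(Ze)^S$ produces an embedding $Z^G \iso \crp{K} \subseteq \compl$ with $\crp{F}\subseteq \crp{K}$. Different choices of $e$ give $G$\nbd conjugate stabilizers acting on Galois\nbd conjugate factors, and the resulting fixed subfields of $\compl$ all coincide, yielding uniqueness. Alternatively one can characterize $\crp{K}$ intrinsically as the subfield of $\compl$ generated by the images of $Z^G$ under any $\crp{F}$\nbd algebra embedding.

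Finally, for $\SC^{(\crp{F})}(G,Z)=\SC^{(\crp{K})}(G,Z)$: since $\crp{K}\subseteq Z^G$, the ring $Z$ already carries a compatible $\crp{K}$\nbd algebra structure. Given any Clifford pair $(\theta',\kappa')$ with $\Z(\theta',\kappa',\crp{F})\iso Z$, I would argue that Turull's construction over $\crp{K}$ applied to the same pair produces $\Z(\theta',\kappa',\crp{K})\iso Z$ as $G$\nbd algebras, and that the Brauer-Clifford classes $\brcls{\theta'}{\kappa'}{\crp{F}}$ and $\brcls{\theta'}{\kappa'}{\crp{K}}$ correspond under this identification, yielding both inclusions at once. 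The main obstacle I anticipate is precisely this last step: verifying that enlarging scalars from $\crp{F}$ to $\crp{K}=Z^G$ leaves the center algebra and its Brauer-Clifford class essentially unchanged. This requires a careful reading of the Morita-theoretic ingredients of~\cite[Section~7]{turull09}, exploiting the fact that $\crp{K}$ already lies inside the $G$\nbd invariants so that no further splitting or collapsing of idempotents can occur.
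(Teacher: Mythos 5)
Your first two steps are essentially sound: $Z$ is a product of fields permuted transitively by $G$, projection onto a factor $Ze$ identifies $Z^G$ with $(Ze)^{G_e}$, so $Z^G$ is a field; and since $Ze\iso\crp{F}(\theta^g)$ lies in a cyclotomic, hence abelian, extension of $\crp{F}$, the field $Z^G$ is a normal extension of $\crp{F}$, so all its $\crp{F}$\nbd embeddings into $\compl$ have the same image — that normality, which you do not state, is what actually gives uniqueness of $\crp{K}$, not the independence of the choice of $e$. (The paper, incidentally, proves the whole proposition first for $Z$ a field and then deduces the general case from the reduction $\SC(G,Z)\iso\SC(H,Ze)$ of Theorem~\ref{t:sccliffcor}; your direct treatment of general $Z$ would have to absorb that reduction as well.)

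The genuine gap is in your last step, and it is not the obstacle you anticipate. The direction you worry about is the easy one: if $\Z(\theta',\kappa',\crp{F})\iso Z$, then $\crp{K}$ embeds into the $G$\nbd fixed part of the relevant center, a $\theta'$\nbd quasihomogeneous $\crp{F}\oG$\nbd module can be viewed as a $\crp{K}\oG$\nbd module, and $\enmo_{\crp{K}N}(V)=\enmo_{\crp{F}N}(V)$ gives $\brcls{\theta'}{\kappa'}{\crp{F}}=\brcls{\theta'}{\kappa'}{\crp{K}}$, hence $\SC^{(\crp{F})}(G,Z)\subseteq\SC^{(\crp{K})}(G,Z)$ (this is Lemma~\ref{l:brclsfielddep}). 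But your plan to get ``both inclusions at once'' by applying Turull's construction over the two fields to the \emph{same} pair fails in the other direction: a pair with $\Z(\theta',\kappa',\crp{K})\iso Z$ need not satisfy $\Z(\theta',\kappa',\crp{F})\iso Z$. In the field case $Z=\crp{E}$ one has $\crp{E}=\crp{K}(\theta')$, but possibly $\crp{F}(\theta')<\crp{E}$ (equivalently $\crp{K}\not\subseteq\crp{F}(\theta')$); for instance $\crp{F}=\rats$, $\crp{K}=\rats(\sqrt{2})$, $\crp{E}=\rats(\sqrt{2},\zeta)$ with $\zeta$ a primitive third root of unity and $\rats(\theta')=\rats(\zeta)$. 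Then $\brcls{\theta'}{\kappa'}{\crp{F}}$ lives in a different Brauer--Clifford group altogether and the pair is simply not admissible for $\SC^{(\crp{F})}(G,Z)$, so no amount of care about Morita theory or idempotents rescues the symmetric argument. The inclusion $\SC^{(\crp{K})}(G,Z)\subseteq\SC^{(\crp{F})}(G,Z)$ requires modifying the pair: the paper takes an auxiliary pair $(\phi,\pi)$ from Lemma~\ref{l:scid} with $[\crp{E}]=\brcls{\phi}{\pi}{\crp{F}}=\brcls{\phi}{\pi}{\crp{K}}$ and $\crp{F}(\phi)=\crp{E}$, and multiplies by this identity class via the fiber-product construction of Lemma~\ref{l:scprod}, so that $\brcls{\theta'}{\kappa'}{\crp{K}}=\brcls{\theta'\times\phi}{\kappa'\times_G\pi}{\crp{K}}=\brcls{\theta'\times\phi}{\kappa'\times_G\pi}{\crp{F}}$, the last equality by Lemma~\ref{l:brclsfielddep} since now the character field over $\crp{F}$ is all of $\crp{E}$. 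Some such enlargement of the character field is the missing idea in your proposal.
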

This means that we can always work with the field
$\crp{K}\iso Z^G$ which is uniquely determined by 
the $G$\nbd algebra $Z$.

We also consider certain subclasses $\mathcal{X}$
of the class of all Clifford pairs. 
Possible examples are 
the class $\mathcal{X}$ of all pairs $(\theta,\kappa)$
such that $N=\Ker \kappa$ is abelian (nilpotent, solvable \dots),
or the class of all pairs such that
$\Ker \kappa$ is a central subgroup of the domain of 
$\kappa$,
or the class of all pairs such that $\theta$ is induced
from a linear character of some subgroup, and so on.
For any class $\mathcal{X}$,
we define a \defemph{restricted Schur-Clifford subset}
\[ \SC_{\mathcal{X}}(G,Z)
   = 
   \{ \overline{\alpha}
             \brcls{\theta}{\kappa}{\crp{F}}
             \mid \Z(\theta,\kappa,\crp{F}) \iso Z
                  \text{ via } \alpha 
                  \text{ for some }
                  (\theta,\kappa)\in \mathcal{X}
   \}.
\]
Of course, depending on $\mathcal{X}$ and $Z$, this may not
be a subgroup, even if
$\SC(G,Z)\neq \emptyset$.
Our proof of Theorem~\ref{t:main} will show, however,
that $\SC_{\mathcal{X}}(G,Z)$ \emph{is} a subgroup
for certain classes $\mathcal{X}$. 
(See Theorem~\ref{t:scresgroup} for the case 
 where $Z$ is a field.)
Finally, we mention the following: Assume that
$Z\iso\Z(\theta,\kappa,\crp{F})$ for some Clifford pair
$(\theta,\kappa)$ over $G$.
Then it is known that
$Z \iso \crp{F}(\theta)\times \dotsm \times \crp{F}(\theta)$
is a direct product of isomorphic fields
and that $G$ permutes transitively the factors of $Z$.
Let $e\in Z$ be a primitive idempotent and let
$H\leq G$ be its stabilizer in $G$.
Thus $K=Ze\iso \crp{F}(\theta)$ is an $H$\nbd algebra.
Turull~\cite[Theorem~5.3]{turull09} has shown that
$\BrCliff(G,Z)\iso \BrCliff(H, K) $ canonically.
\begin{introthm}\label{t:reductofields}
  In the situation just described,
  the canonical isomorphism 
  \[\BrCliff(G,Z)\iso \BrCliff(H,K)\]
  restricts to an isomorphism
  \[\SC(G,Z)\iso \SC(H,K).\]
\end{introthm}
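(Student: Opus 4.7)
The plan is to verify both containments
\[ \phi(\SC(G,Z)) \subseteq \SC(H,K) \quad\text{and}\quad \SC(H,K) \subseteq \phi(\SC(G,Z)) \]
for the canonical isomorphism $\phi\colon \BrCliff(G,Z) \iso \BrCliff(H,K)$, which on the level of $(G,Z)$\nbd CSAs is induced by $A \mapsto eAe$.

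For the forward containment, let $(\theta',\kappa')$ be any Clifford pair over $G$ with an isomorphism $\alpha\colon \Z(\theta',\kappa',\crp{F}) \iso Z$; write $\oG'$ for the domain of $\kappa'$ and $N' := \Ker\kappa'$. The primitive idempotents of $\Z(\theta',\kappa',\crp{F})$ are permuted by $G$ according to its action on the $\oG'/N'$\nbd conjugates of $\theta'$, so the stabilizer in $G$ of $\alpha^{-1}(e)$ equals the image in $G$ of the inertia subgroup $T' := I_{\oG'}(\theta')$. Setting $\widetilde\kappa := \kappa'|_{T'}\colon T' \to H$, the restricted pair $(\theta',\widetilde\kappa)$ is a Clifford pair over $H$, the character $\theta'$ is $T'$\nbd invariant, and $\Z(\theta',\widetilde\kappa,\crp{F}) \iso \crp{F}(\theta') \iso K$. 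Inspecting Turull's explicit central simple algebra representing $\brcls{\theta'}{\kappa'}{\crp{F}}$, one verifies that passing through $\phi$ (i.e.\ cutting down by $e$ on both sides after transport by $\alpha$) produces precisely the CSA representing $\brcls{\theta'}{\widetilde\kappa}{\crp{F}}$. Hence $\phi$ maps every class in $\SC(G,Z)$ into $\SC(H,K)$.

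For the reverse containment, given a Clifford pair $(\psi,\lambda)$ over $H$ with $\beta\colon \Z(\psi,\lambda,\crp{F}) \iso K$, the goal is to produce a Clifford pair over $G$ whose BrCliff class maps to $\overline\beta\brcls{\psi}{\lambda}{\crp{F}}$. The most direct candidate is the pair $(\psi,\kappa)$, where $\kappa\colon \oG \to G$ is any surjection with $\Ker\kappa = \Ker\lambda$ whose restriction to $\kappa^{-1}(H)$ equals $\lambda\colon\overline{H}_0\to H$: the forward construction applied to $(\psi,\kappa)$ then recovers $(\psi,\lambda)$, so its BrCliff class is the desired preimage. The main obstacle is the existence of such a $\kappa$, i.e.\ of a group extension $1\to N \to \oG \to G \to 1$ restricting along $H\leq G$ to the given extension $1 \to N \to \overline{H}_0 \to H \to 1$, which is cohomologically obstructed in general. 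To overcome this, I would exploit the fact that only the class in $\BrCliff(H,K)$ needs to be preserved and replace $(\psi,\lambda)$ by a BrCliff-equivalent pair $(\psi',\lambda')$ built by a co-induced or wreath-type construction along $G/H$, designed to absorb the extension obstruction into an auxiliary direct factor of $N$ so that the resulting extension problem over $G$ admits a solution. Establishing that such an equivalent representative always exists without altering the class in $\BrCliff(H,K)$ is the technical heart of the argument, and will rely on the subgroup property from Theorem~\ref{t:main} together with the functoriality of $\BrCliff$ under the base-change isomorphisms $\overline\alpha$.
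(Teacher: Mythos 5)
Your forward containment already contains an error: you identify the stabilizer $H=G_e$ with the image in $G$ of the inertia group $T'=I_{\oG'}(\theta')$. The primitive idempotents of $\Z(\theta',\kappa',\crp{F})$ are the $G$\nbd conjugates of $e_{\theta',\crp{F}}$, and the stabilizer of such an idempotent is the image of the \emph{semi-invariance} group $\{x\in\oG' \mid \theta'^x=\theta'^\sigma \text{ for some }\sigma\in\Gal(\crp{F}(\theta')/\crp{F})\}$, which in general properly contains the image of $I_{\oG'}(\theta')$. Hence $\kappa'|_{T'}$ need not map onto $H$, and $(\theta',\widetilde\kappa)$ need not be a Clifford pair over $H$. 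The correct restricted pair is $(\theta'^g,\rho)$ for a suitable $g\in G$, where $\rho$ is the restriction of $\kappa'$ to the full preimage $\kappa'^{-1}(H)$; then $\theta'^g$ is only semi-invariant over $\crp{F}$ in $\kappa'^{-1}(H)$, which is exactly what makes $\Z(\theta'^g,\rho,\crp{F})$ a field isomorphic to $K$ (this is how Proposition~\ref{p:restriction} proceeds). This is a fixable slip, but as written the easy direction is not correct.

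The real content of the theorem is surjectivity, and here your proposal stops short of a proof. You correctly sense that a co-induction/wreath construction along $G/H$ is the right device, but you explicitly defer its construction and the verification that it does not change the class in $\BrCliff(H,K)$ as ``the technical heart,'' and your framing of the problem is off: one should not look for an extension $\oG\to G$ with the \emph{same} kernel $M=\Ker\lambda$ restricting to $\lambda$ (that obstructed problem is never solved in the paper). Instead the kernel is enlarged: with $T$ a right transversal of $H$ in $G$ and $\phi\colon G\to H\wr\sym{T}$ the coset-action homomorphism, one takes $\oG$ to be the pullback of $\phi$ and $\lambda\wr 1\colon \oH\wr\sym{T}\to H\wr\sym{T}$, obtaining $\lambda^{\tensor G}\colon\oG\to G$ with kernel $M^T$ (Lemma~\ref{l:exseti}), and one shows that $\brcls{\psi\times 1_M\times\dotsm\times 1_M}{\lambda^{\tensor G}}{\crp{F}}$ is mapped to the given class. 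This requires constructing the module $V=\bigoplus_{t\in T}W_t$ from $t$\nbd conjugates of a $\psi$\nbd quasihomogeneous module $W$, proving $\enmo_{\crp{F}[M^T]}(V)\iso S$ where $S$ is the simple $G$\nbd algebra over $Z$ with $Se\iso\enmo_{\crp{F}M}(W)$ (a computation that genuinely uses $\psi\neq 1_M$), and treating the case $\psi=1_M$ separately by showing $Z$ itself is a Schur $G$\nbd algebra. None of these steps appears in your argument. Finally, beware the circularity in invoking Theorem~\ref{t:main}: for general $Z$ that theorem is deduced \emph{from} the present statement, so at this point only the field case (Theorem~\ref{t:scgroup}, applicable to $(H,K)$) may be used.
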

We mention that it is not particularly difficult to see that the map
$\BrCliff(G,Z)\to \BrCliff(H,K)$ maps
$\SC(G,Z)$ into $\SC(H,K)$
(replace $(\theta,\kappa)$ by 
 $(\theta, \pi)$, where
 $\pi$ is the restriction of $\kappa$ to
 $\kappa^{-1}(H)$).
The proof that every class in $\SC(H,K)$ comes from a class
in $\SC(G,Z)$ is more complicated and involves a construction
using wreath products.

The proofs are organized as follows:
First, we prove Theorem~\ref{t:main} and Proposition~\ref{p:fieldindep} 
in the case where $Z$ is a field.
(This is done in Sections~\ref{sec:proof} and~\ref{sec:depfield}.
In Sections~\ref{sec:brauercliff}--\ref{sec:semiinv}
we review the necessary definitions and background we need,
and define the Schur-Clifford group.)
Then, in Section~\ref{sec:notfield},
 we prove Theorem~\ref{t:reductofields} in the sense that we show
that the set $\SC(G,Z)$ and the subgroup
$\SC(H,K)$ correspond under the canonical isomorphism
of Brauer-Clifford groups.
This then implies that 
Theorem~\ref{t:main} and Proposition~\ref{p:fieldindep}
are true for arbitrary $Z$.
In Section~\ref{sec:res}, we show that the Schur-Clifford group
behaves well with respect to restriction of groups
(this yields the easy part of Theorem~\ref{t:reductofields}).
In the more technical Section~\ref{sec:cores}, we consider the relation
of the Schur-Clifford group to the \emph{corestriction} map
as defined in our paper~\cite{ladisch15a}.
This will be needed in forthcoming work, but is included here since 
the proof is similar to the proof of Theorem~\ref{t:reductofields},
and we can use some lemmas from this proof.
Finally, in the last section, we describe the Schur-Clifford group
$\SC(G,\crp{F})$, when $G$ acts trivially on $\crp{F}$,
in terms of the Schur subgroup and a certain subgroup of 
$H^2 (G, \crp{F}^*)$,
which has been studied by Dade~\cite{dade74}.
Based on this special case, we propose a conjecture.

\section{Review of the Brauer-Clifford group}
\label{sec:brauercliff}
The Brauer-Clifford group was first defined by Turull~\cite{turull09} 
for commutative simple $G$\nbd algebras.  
Later, he gave a different, but equivalent definition~\cite{turull11}.
Herman and Mitra~\cite{HermanMitra11} have shown how to extend
the definition to arbitrary commutative $G$\nbd rings, 
and that the Brauer-Clifford group is the same
as the equivariant Brauer group defined earlier
by Fröhlich and Wall~\cite{frohlichwall00}. 

Let $G$ be a group.
A 
\defemph{$G$\nbd ring} is a ring
$R$ on which $G$ acts by ring automorphisms.
\emph{In this paper, ``ring'' always means
``ring with one'' and 
``ring homomorphism'' always means
``unital ring homomorphism''.}
We use exponential notation $r\mapsto r^g$
($r\in R$, $g\in G$)
to denote the action of $G$
on $R$.
A \defemph{$G$\nbd ring homomorphism}
(or \defemph{homomorphism of $G$\nbd rings})
is a ring homomorphism $\phi\colon R\to S$
between $G$\nbd rings
such that $r^g \phi = (r\phi)^g$
for all $r\in R$ and $g\in G$.

Let $R$ be a commutative $G$\nbd ring. 
A \defemph{$G$\nbd algebra $A$ over $R$}
is a $G$\nbd ring $A$, together with an
homomorphism of $G$\nbd rings
$\eps\colon R \to \Z(A)$.
In particular, $A$ is an algebra over $R$.
We usually suppress $\eps$ and simply write
$ra$ for $(r\eps)\cdot a$. 
A $G$\nbd algebra $A$ over the commutative $G$\nbd ring $R$
is called an \defemph{Azumaya $G$\nbd algebra over $R$}
if it is Azumaya over $R$ as an $R$\nbd algebra.
We will only need the cases where $R$ is a field
or a direct product of fields.
If $R$ is a field, 
$A$ is Azumaya over $R$ iff $A$ is a finite dimensional 
central simple $R$\nbd algebra.
If $R$ is a direct product of fields, then
$A$ is Azumaya iff the algebra unit induces an isomorphism
$R\iso \Z(A)$, and   for every primitive idempotent $e$ of $R$,
the algebra $Ae$ is central simple over the field $Re$.

If $A$ and $B$ are Azumaya $G$\nbd algebras over the 
$G$\nbd ring $R$, then
$A \tensor_{R} B$ is an Azumaya $G$\nbd algebra over $R$.

The Brauer-Clifford group $\BrCliff(G,R)$
consists of equivalence classes of Azumaya 
$G$\nbd algebras over $R$.
To define the equivalence relation,
we need some definitions first.
Let $R$ be a $G$\nbd ring.
The \defemph{(skew) group algebra} $RG$ 
is the set of formal sums
\[ \sum_{g\in G} g r_g, \quad r_g\in R,
\]
which form a free $R$\nbd module with basis $G$,
and multiplication defined by $gr \cdot hs = gh r^hs$
and extended distributively.
Let $R$ be commutative and $P$ a right $RG$\nbd module.
Then $\enmo_R(P)$ is a $G$\nbd algebra over $R$.
If $P_R$ is an $R$-progenerator, then 
$\enmo_R(P)$ is Azumaya over $R$.
(Note that any finitely generated 
 non-zero $R$\nbd module is an $R$\nbd progenerator
 if $R$ is a field.)
By definition,
a \defemph{trivial Azumaya $G$\nbd algebra} over $R$
is an algebra of the form $\enmo_R(P)$,
where $P$ is a module over the group algebra
$RG$ such that $P_R$ is a progenerator over $R$.

Two Azumaya $G$-algebras $A$ and $B$ over the $G$\nbd ring $R$
are called \defemph{equivalent}, 
if there are trivial Azumaya $G$\nbd algebras
$E_1$ and $E_2$ such that
\[ A\tensor_R E_1 \iso B\tensor_R E_2.
\]
In other words, $A$ and $B$ are equivalent, if there
are $RG$\nbd modules $P_1$ and $P_2$ 
which are progenerators over $R$ and
such that
\[ A \tensor_R \enmo_R(P_1) \iso
   B \tensor_R \enmo_R(P_2).
\]

The \defemph{Brauer-Clifford group} $\BrCliff(G,R)$
is the set of equivalence classes of 
Azumaya $G$\nbd algebras over $R$
together with the (well defined) multiplication induced by 
the tensor product $\tensor_R$.
With this multiplication,
$\BrCliff(G,R)$ becomes an abelian torsion 
group~\cites[Theorem~6.2]{turull11}[Theorem~5]{HermanMitra11}.

\section{The Schur-Clifford group}
\label{sec:schurcliff}
Let $(\theta,\kappa)$ be a Clifford pair, 
as defined in the introduction.
Recall that this means that there is an exact sequence
of groups
\[ \begin{tikzcd}
     1 \rar & N \rar[hook]   & \oG \rar{\kappa} & G \rar & 1,
   \end{tikzcd}
\]
and that $\theta\in \Irr N$.

In this situation, $G\iso \oG/N$ acts on the 
set $\Irr N$ of irreducible
characters of $N$.
Moreover, $G$ acts on $\Z(\compl N)$, 
the center of the group algebra.
Recall that with every $\theta\in \Irr N$ there is associated
a central primitive idempotent
\[ e_{\theta} = \frac{\theta(1)}{\abs{N}}
                 \sum_{n\in N} \theta(n^{-1})n.
\]
The actions of $G$ on $\Irr N $ and $\Z(\compl N)$
are compatible in the sense that
$(e_{\theta})^g = e_{\theta^g}$.

Let $\crp{F}\leq \compl$ be a subfield of $\compl$.
Let $\Gamma:=\Gal(\compl/\crp{F})$ be the group of all
field automorphisms of $\compl$ that leave every element
of $\crp{F}$ fixed.
(For our purposes, $\compl$ can be replaced by any 
 algebraically closed field containing $\crp{F}$,
 or by a Galois extension of $\crp{F}$
 that is a splitting field of $\oG$.)
Then $\Gamma$ acts on $\Irr N$ 
by $\theta^{\alpha}(n)=\theta(n)^{\alpha}$
and on
$\compl N$ by acting on coefficients.
Again, these actions are compatible in the sense that
$(e_{\theta})^{\alpha}=e_{\theta^{\alpha}}$.
Observe, however, that if we view $\theta$ as a function defined
on $\compl N$, then
$\theta^{\alpha}(\sum_n a_n n) 
  = \sum_n a_n \theta(n)^{\alpha}
  = \theta(\sum_n a_n^{\alpha^{-1}} n)^{\alpha}$.
  
Since the actions of $\Gamma$ and $G$ commute,
we get an action of $\Gamma \times G$ on 
$\Irr N$ respective $\Z(\compl N)$.

We write
  \[ e_{\theta, \crp{F}}
      = \sum_{\alpha\in \Gal(\crp{F}(\theta)/\crp{F})}
        (e_{\theta})^{\alpha}.
  \]
Thus 
$e_{\theta,\crp{F}}$ is the central primitive idempotent
of $\crp{F}N$ associated to $\theta$,
and $\crp{F}N e_{\theta,\crp{F}}$ the corresponding simple
summand. 
It is well known that 
$\Z(\crp{F}Ne_{\theta,\crp{F}})\iso \crp{F}(\theta)$:
an isomorphism is given by restricting the
\defemph{central character}
\[ \omega_{\theta}\colon \Z(\crp{F}N)\to \crp{F}(\theta),
   \quad \omega_{\theta}(z)= \frac{\theta(z)}{\theta(1)}
\] 
to
$\Z(\crp{F}Ne_{\theta,\crp{F}})$~\cite[Proposition~38.15]{huppCT}.

We write
  $(e_{\theta,\crp{F}})^{\oG}$ or $(e_{\theta,\crp{F}})^G$ for the sum
  of the different $G$\nbd conjugates of $e_{\theta,\crp{F}}$.
Then $(e_{\theta, \crp{F}})^G$ is a primitive idempotent
in 
$(\Z(\crp{F}N))^G = \C_{\Z(\crp{F}N)}(G) 
                  =\crp{F}N\cap\Z(\crp{F}\oG)$.
It is the sum of the idempotents corresponding to irreducible
characters in the $(\Gamma \times G)$-orbit of $\theta$.
Following Turull~\cite{turull09}, we define
\[ \Z(\theta, \kappa, \crp{F})
   = \Z(\crp{F}N)(e_{\theta,\crp{F}})^G. 
\]
This is called 
the \defemph{center algebra} of $(\theta,\kappa)$ over $\crp{F}$.
It is a commutative $G$-algebra over $\crp{F}$,
and a direct sum of the algebras
$\Z(\crp{F}N(e_{\theta,\crp{F}})^g)\iso \crp{F}(\theta)$.
These summands are permuted transitively by the group $G$.
Thus 
\[\Z(\theta,\kappa,\crp{F})\iso 
   \crp{F}(\theta)\times \dotsm \times \crp{F}(\theta)
\]
is simple in the sense that it has no nontrivial
$G$\nbd invariant ideals.
The center algebra depends only
on the $\Gamma \times G$\nbd orbit of $\theta$,
where $\Gamma= \Gal(\compl/\crp{F})$.

Turull~\cite{turull09} has also shown how to
associate an equivalence class of simple
$G$\nbd algebras with center 
$\Z(\theta,\kappa,\crp{F})$
to the triple $(\theta,\kappa, \crp{F})$.
Namely, let $V$ be any non-zero
$\crp{F}\oG$\nbd module such that
$V=V(e_{\theta,\crp{F}})^G$.
Such a module is called
\defemph{$\theta$-quasihomogeneous}.
Let $S=\enmo_{\crp{F}N}(V)$.
Then $G\iso \oG/N$ acts on $S$.
For $z\in Z = \Z(\theta,\kappa,\crp{F})$, 
the map $v\mapsto vz$ is in $S$.
This defines an isomorphism of $G$\nbd algebras
$\Z(\theta,\kappa,\crp{F})\iso \Z(S)$.
The equivalence class of $S$ in the 
Brauer-Clifford group $\BrCliff(G,Z)$
depends only on $(\theta,\kappa,\crp{F})$,
not on the choice of the $\crp{F}\oG$\nbd module 
$V$~\cite[Theorem~7.6]{turull09}.
We write $\brcls{\theta}{\kappa}{\crp{F}}$ to denote
this element of the Brauer-Clifford group,
and call it the 
\defemph{Brauer-Clifford class} of $(\theta,\kappa)$ over $\crp{F}$.
\begin{defi}
  Let $S$ be a $G$\nbd algebra, with $\crp{F}\subseteq \Z(S)^G$.
   We call $S$ a 
   \defemph{Schur $G$\nbd algebra} over $\crp{F}$,   
   if there is a Clifford pair $(\theta,\kappa\colon \oG\to G)$
   and a $\theta$-quasihomogeneous
   $\crp{F}\oG$-module $V$, such that
  $S\iso \enmo_{\crp{F}N} V$ as $G$\nbd algebras over $\crp{F}$.  
\end{defi}
This means in particular that $S\in \brcls{\theta}{\kappa}{\crp{F}}$.
This definition is not completely analogous to 
the definition used in the theory of the Schur subgroup
of the Brauer group~\cite[cf.][]{Yamada74}.
Recall that a central simple algebra $S$ over a field 
$K$ is called a Schur algebra if 
$S$ is isomorphic to a direct summand of the group algebra $ K N$
of some finite group $N$.
In the situation above,
$V_{\crp{F}N}\iso \left(\sum_i U_i\right)^n$, 
where the $U_i$ are simple $\crp{F}N$\nbd modules 
which are conjugate in $\oG$.
Thus $S\iso \prod_i \mat_n(D)$, where
$D\iso \enmo_{\crp{F}N}(U_i)$.
We also have
$\crp{F}Ne_{\theta,\crp{F}}\iso \mat_k(D)$
and
$\crp{F}N (e_{\theta,\crp{F}})^G
 \iso \prod_i \mat_k(D)$
for some $k$.
Thus $S$ and $\crp{F}N (e_{\theta,\crp{F}})^G$
are Morita equivalent.

We mention that one can show that in fact every $G$\nbd algebra
in a Brauer-Clifford class 
$\brcls{\theta}{\kappa}{\crp{F}}$ 
is a Schur $G$\nbd algebra over $\crp{F}$,
if the class contains one Schur algebra.

\begin{lemma}[{Turull~\cite[Proposition~7.2]{turull09}}]
  Let $S$ be a Schur $G$-algebra over $\crp{F}$. Then
  $\Z(S)\iso \crp{E}\times\dots \times \crp{E}$, where
  $\crp{E}$ is a field contained in a cyclotomic extension of
  $\crp{F}$.
\end{lemma}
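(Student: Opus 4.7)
The plan is to trace through the definition of a Schur $G$\nbd algebra and invoke the structural description of the center algebra given just before the lemma statement, together with Brauer's classical theorem on the rationality of character values.

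First I would unwind the definition: if $S$ is a Schur $G$\nbd algebra over $\crp{F}$, then there is a Clifford pair $(\theta,\kappa\colon \oG\to G)$ and a $\theta$\nbd quasihomogeneous $\crp{F}\oG$\nbd module $V$ with $S\iso \enmo_{\crp{F}N}(V)$ as $G$\nbd algebras over $\crp{F}$. From the construction of the Brauer-Clifford class recalled in the preceding discussion, the map $z\mapsto (v\mapsto vz)$ produces an isomorphism of $G$\nbd algebras
\[ \Z(\theta,\kappa,\crp{F})\iso \Z(S).
\]
Thus it suffices to prove the assertion about $\Z(\theta,\kappa,\crp{F})$.

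Next I would appeal to the decomposition already displayed in Section~\ref{sec:schurcliff}:
\[ \Z(\theta,\kappa,\crp{F}) \iso \crp{F}(\theta)\times \dotsm \times \crp{F}(\theta),
\]
with one factor for each $G$\nbd conjugate of $(e_{\theta,\crp{F}})$, coming from the identification of $\Z(\crp{F}N e_{\theta,\crp{F}})$ with $\crp{F}(\theta)$ via the central character $\omega_\theta$. So with $\crp{E}:=\crp{F}(\theta)$ we already have the required product decomposition.

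It remains to check that $\crp{E}=\crp{F}(\theta)$ lies in a cyclotomic extension of $\crp{F}$. This is Brauer's theorem on character values: every value $\theta(n)$ is a sum of $\abs{N}$\nbd th roots of unity in $\compl$, so $\rats(\theta)\subseteq \rats(\zeta_{\abs{N}})$ where $\zeta_{\abs{N}}$ is a primitive $\abs{N}$\nbd th root of unity, and therefore
\[ \crp{F}(\theta)\subseteq \crp{F}(\zeta_{\abs{N}}),
\]
a cyclotomic extension of $\crp{F}$. This completes the argument. There is really no obstacle here; the content of the lemma is entirely bundled in the definitions and in Brauer's theorem, and my role is only to point out where each ingredient is used.
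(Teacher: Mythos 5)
Your argument is correct and follows essentially the same route as the paper: identify $\Z(S)$ with $\Z(\theta,\kappa,\crp{F})$ and use its decomposition as a product of copies of $\crp{F}(\theta)$, with the cyclotomic containment coming from the fact that character values are sums of roots of unity. One small quibble: the containment $\crp{F}(\theta)\subseteq\crp{F}(\zeta_{\abs{N}})$ needs only the elementary observation that eigenvalues of finite-order matrices are $\abs{N}$\nbd th roots of unity, not Brauer's (deeper) splitting-field theorem, so the attribution is a bit of overkill.
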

\begin{proof}
  When $S\in \brcls{\theta}{\kappa}{\crp{F}}$,
  then $\Z(S)\iso \Z(\theta,\kappa,\crp{F})$,
  and $\Z(\theta,\kappa,\crp{F})$ is a direct product
  of fields of the form 
  $\Z(\crp{F}N)e_{\theta^g,\crp{F}}
    \iso\crp{F}(\theta^g)
    =\crp{F}(\theta)$.  
\end{proof}

\begin{defi}
  Let $Z$ be a commutative simple $G$-algebra,
  and assume $Z\iso \Z(\theta,\kappa,\crp{F})$
  for some Clifford pair $(\theta,\kappa)$. 
  The \defemph{Schur-Clifford group} of
  $Z$ is the set of all equivalence classes
  of central simple
  $G$\nbd algebras over $Z$ that contain a Schur
  $G$\nbd algebra over $\crp{F}$.
  We denote it by $\SC^{(\crp{F})}(G,Z)$
  or simply $\SC(G,Z)$.
  In other words,
  \[ \SC(G,Z)=\{ \overline{\alpha}
                 \brcls{\theta}{\kappa}{\crp{F}}\mid
                 \Z(\theta,\kappa,\crp{F})\iso Z 
                 \text{ via } \alpha\},
  \]
  where 
  \[ \overline{\alpha}\colon 
       \BrCliff(G,\Z(\theta,\kappa,\crp{F})) 
       \to \BrCliff(G,Z)
  \]
  is induced by $\alpha$.
\end{defi}
Omitting the field $\crp{F}$ from the notation 
will be justified once we have proved
Proposition~\ref{p:fieldindep} from the introduction.

Our aim is to show that 
the Schur-Clifford group $\SC(G,Z)$ is indeed a subgroup of the
Brauer-Clifford group.

It will be convenient to have a definition of 
a restricted Schur-Clifford group, 
where not all pairs $(\theta,\kappa)$ are allowed.
For example, we could consider only pairs
where the kernel of $\kappa$ is 
contained in the center of $\oG$. 
\begin{defi}
  Let $G$ be a finite group and $Z$ a commutative
  simple $G$\nbd algebra over $\crp{F}$.
  Let $\mathcal{X}$ be a class of 
  Clifford pairs. 
  Then
  \[ \SC_{\mathcal{X}}(G,Z)
    =\{ \overline{\alpha}\brcls{\theta}{\kappa}{\crp{F}}
        \mid (\theta,\kappa)\in \mathcal{X}
        \quad\text{and}\quad
        \Z(\theta,\kappa,\crp{F})\stackrel{\alpha}{\iso} Z \}.
  \]
\end{defi}
Of course, depending on $\mathcal{X}$,
$\SC_{\mathcal{X}}(G,Z)$ might not be a subgroup
of the Brauer-Clifford group.
\begin{example}\label{ex:complcohom}
  Let $Z=\compl $ with trivial $G$-action.
  Then $\BrCliff(G,\compl)\iso H^2(G,\compl^*)$.
  Every simple $G$-algebra $S$ over $\compl $
  is isomorphic to a matrix ring
  $\mat_n(\compl)$ 
  and defines a projective representation
  $G\to S^*$.
  By the classical theory of Schur, 
  the projective representation can be lifted to an
  ordinary representation
  $\oG\to S^*$, where $\oG$ has a cyclic central subgroup
  $C$ such that $\oG/C\iso G$.
  Thus 
  $\BrCliff(G,\compl)=\SC(G,\compl)=\SC_{\mathcal{C}}(G,\compl)$,
  where $\mathcal{C}$ 
  denotes the class of pairs $(\theta,\kappa)$, 
  where $\Ker\kappa$ is a central cyclic subgroup.
  
  It is also known that there is a central extension
  \[ \begin{tikzcd}
      1 \rar & H^2(G,\compl^*) \rar & \oG \rar{\kappa} & G \rar &1
     \end{tikzcd}
  \]
  such that every projective representation of $G$ can be lifted 
  to an ordinary representation of $\oG$.
  The group $\oG$ is then called a Schur representation group.
  Thus for 
  $\mathcal{C} = \mathcal{C}_{\kappa}
               = \{ (\theta,\kappa) \mid 
                    \theta\in \Lin(\Ker\kappa)
                 \}$,
  where $\kappa\colon \oG\to G$ is a fixed
  Schur representation group of $G$,
  we also have $\BrCliff(G,\compl)=\SC_{\mathcal{C}}(G,\compl)$.
\end{example}

\section{Semi-invariant Clifford pairs}
\label{sec:semiinv}
Let $(\theta,\kappa)$ be a Clifford pair and 
$\crp{F}\subseteq \compl$ a field
such that $\Z(\theta,\kappa,\crp{F})$ is a field.
By the remarks above on the center algebra, this means
that $(e_{\theta,\crp{F}})^G= e_{\theta,\crp{F}}$
is invariant in $G$ or $\oG$
(where, as before, $\kappa\colon\oG\to G$).
In this case, we say that
$\theta$ is 
\defemph{semi-invariant} in $\oG$ over $\crp{F}$.
We also say that $(\theta,\kappa)$ is 
a semi-invariant Clifford pair over $\crp{F}$.
A character $\theta$ is semi-invariant over $\crp{F}$ in $\oG$
if and only if for every
$g\in \oG$ there is 
$\alpha_g\in \Gal(\crp{F}(\theta)/\crp{F})$ such that
$\theta^{g\alpha_g}=\theta$.
The map $g\mapsto \alpha_g$ is a group homomorphism
from $\oG$ to $\Gal(\crp{F}(\theta)/\crp{F})$
with kernel $\oG_{\theta}$~\cite[Lemma~2.1]{i81b}.
Since $N=\Ker\kappa\subseteq \oG_{\theta}$,
the map $g\mapsto\alpha_g$ defines an action
of $G$ on $\crp{F}(\theta)$.
Thus
$\crp{F}(\theta)$ can be viewed as a
$G$\nbd ring.
Note that a Galois conjugate $\theta^{\sigma}$
of $\theta$ yields the same homomorphism, 
since $\Gal(\rats(\theta)/\rats)$ is abelian.

The central character 
$\omega_{\theta}$ defines an isomorphism
$\Z(\theta,\kappa,\crp{F})\iso \crp{F}(\theta)$.
From
\[ \theta(z^g)= \theta^{g^{-1}}(z)= \theta(z)^{\alpha_g}
\]
we see that 
$\omega_{\theta}\colon \Z(\theta,\kappa,\crp{F})\to \crp{F}(\theta)$
is an isomorphism of $G$\nbd algebras.
In fact, any other $\crp{F}$\nbd isomorphism
$\Z(\theta,\kappa,\crp{F})\to \crp{F}(\theta)$
is an isomorphism of $G$\nbd algebras, 
since $\Gal(\crp{F}(\theta)/\crp{F})$ is abelian.

When considering the Schur-Clifford group of a field
$Z$ with $G$\nbd action, it is thus natural to assume that
$Z=\crp{E}$ is a subfield of $\compl$
(or any algebraically closed field of characteristic zero
 fixed in advance and 
 in which all characters are assumed to take values).
Thus
\[ \SC(G,\crp{E})
    =
    \{ \overline{\omega_{\theta}}\brcls{\theta}{\kappa}{\crp{F}}
       \mid \Z(\theta,\kappa,\crp{F})\stackrel{\omega_{\theta}}{\iso} 
       \crp{E}= \crp{F}(\theta)
    \}.
\]
Let us mention here that the notation
$\SC(G,\crp{E})$, 
as well as 
$\BrCliff(G,R)$, 
does not reflect the dependence on the actual
action of $G$ on $R$. 
A ring may be equipped with different actions of the same group $G$.
In particular, two different
semi-invariant Clifford pairs
$(\theta_1,\kappa_1)$ and $(\theta_2,\kappa_2)$
with
$\crp{F}(\theta_1)=\crp{F}(\theta_2)=\crp{E}$ 
may yield elements of different 
Schur-Clifford groups
which are both denoted by $\SC(G,\crp{E})$.
This is due to the fact that it can happen that 
$\Z(\theta_1,\kappa_1,\crp{F})$ and
$\Z(\theta_2,\kappa_2,\crp{F})$
are not isomorphic as $G$\nbd algebras, 
although they are isomorphic fields.
In the equation displayed above, we assume that
an action of $G$ on $\crp{E}$ is fixed and that
$\omega_{\theta}$ yields an isomorphism
of $G$\nbd algebras
from $\Z(\theta,\kappa,\crp{F})$ to $\crp{E}$.

\section{Proof of the subgroup properties}
\label{sec:proof}
We work now to show that 
$\SC_{\mathcal{X}}(G,Z)$ is a subgroup of 
$\BrCliff(G,Z)$, 
given some mild conditions on $\mathcal{X}$ and $Z$.
We do this first for the case where $Z=\crp{E}$ is a field.
The general case will follow once we have proved
Theorem~\ref{t:reductofields} from the introduction.

Note that when 
$\compl \supseteq\crp{E}\iso \Z(\theta,\kappa,\crp{F})$
for some Clifford pair $(\theta,\kappa)$,
then necessarily $\crp{E}=\crp{F}(\theta)$
is contained in $\crp{F}(\eps)$
for some root of unity $\eps$.
The next lemma yields the converse.
\begin{lemma}\label{l:scid}
  Let $G$ be a group which acts on the field\/
  $\crp{E}\subseteq \compl$.
  Assume that\/ $\crp{F}$ is a subfield of the fixed
  field\/ $\crp{E}^G$.
  If there is a root of unity $\eps$
  such that\/ $\crp{E}\subseteq \crp{F}(\eps)$,
  then\/ $\crp{E}$ is a Schur $G$-algebra over\/ $\crp{F}$.
\end{lemma}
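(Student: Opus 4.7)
The plan is to realize $\crp{E}$ as the endomorphism algebra $\enmo_{\crp{F}N}(V)$ of an explicit simple module arising from a Clifford pair built from cyclotomic data. Set $L := \crp{F}(\eps)$, $\Delta := \Gal(L/\crp{F})$, $H := \Gal(L/\crp{E})$, so that $\crp{E} = L^H$ and the $G$\nbd action on $\crp{E}$ corresponds to a homomorphism $\pi\colon G \to \Delta/H$. Form the fibre product $\widetilde G := G\times_{\Delta/H}\Delta$, which projects onto $G$ with kernel $H\leq\Delta$ and also projects onto $\Delta$.

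Take $M = \langle n\rangle$ cyclic of the order of $\eps$, with the natural $\Delta$\nbd action via $\Delta\hookrightarrow(\crp{Z}/m\crp{Z})^* = \Aut M$, and let $\psi\in\Irr M$ be the character with $\psi(n) = \eps$ (possibly replaced by its complex conjugate to fix conventions). Put $\oG := M\rtimes\widetilde G$ (with $\widetilde G$ acting via its projection to $\Delta$), let $\kappa\colon\oG\twoheadrightarrow G$ be the induced projection, $N := \ker\kappa = M\rtimes H$, and $\theta := \psi^N \in \Irr N$. Standard Clifford-theoretic computations then show that $\theta$ is irreducible with $\crp{F}(\theta) = L^H = \crp{E}$, and that $\theta$ is semi-invariant in $\oG$ over $\crp{F}$ with the induced $G$-action on $\crp{F}(\theta)$ matching $\pi$.

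Next I take $V := L$ as an $\crp{F}$-vector space, equipped with the $\oG$\nbd action
\[ m\cdot \ell := \psi(m)\ell, \qquad (g,\sigma)\cdot\ell := \sigma(\ell), \]
whose compatibility is immediate from $\psi(m^\sigma) = \sigma(\psi(m))$. Any $\crp{F}N$\nbd submodule of $V$ is in particular an $\crp{F}M$\nbd submodule of the simple $\crp{F}M$\nbd module $L$, so $V$ is simple over $\crp{F}N$; a trace computation using the normal basis theorem shows $V$ affords the character $\sum_{\tau\in\Delta/H}\theta^\tau$ and therefore lies in the block of $\theta$, so $V$ is $\theta$-quasihomogeneous (and in particular $\theta$ has Schur index $1$ over $\crp{F}$).

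Finally, any $\crp{F}N$\nbd endomorphism of $V$ must commute with the $M$\nbd action (hence be multiplication by an element of $L$) and with the $H$\nbd action by Galois automorphisms (hence that scalar lies in $L^H = \crp{E}$); so $\enmo_{\crp{F}N}(V) = \crp{E}$, and inspecting the conjugation action of $\oG$ on these scalars reveals exactly the prescribed Galois action of $G$ on $\crp{E}$. Thus $\crp{E} \iso \enmo_{\crp{F}N}(V)$ as $G$\nbd algebras over $\crp{F}$, exhibiting $\crp{E}$ as a Schur $G$\nbd algebra over $\crp{F}$. The main bookkeeping obstacle I anticipate is keeping straight the three simultaneous rôles of $\Delta$ — as Galois group of $L$, as subgroup of $\Aut M$, and as acting on $\Irr M$ — and in particular making sure the chosen convention yields $\pi$ rather than $\pi^{-1}$; once that is pinned down, the remaining verifications are routine.
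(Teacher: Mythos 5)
Your construction is essentially the paper's own proof: the fibre product $\widetilde G = G\times_{\Delta/H}\Delta$ is exactly the paper's pullback $U$ of $G\to\Aut(\crp{E})\leftarrow\Gal(\crp{F}(\eps)/\crp{F})$, the extension $\oG=M\rtimes\widetilde G$ with $N=M\rtimes H$ matches the paper's $\oG=UC$, $N=AC$ with $C=\erz{\eps}$ and $A\iso\Gal(\crp{F}(\eps)/\crp{E})$, and the module $V=\crp{F}(\eps)$ with Galois-plus-multiplication action and the identification $\enmo_{\crp{F}N}(V)\iso\crp{E}$ via commuting with $L$-multiplication and with $H$ is the same argument. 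The only blemish is the unneeded (and in general false, unless $\pi$ is surjective) remark that $\widetilde G$ projects \emph{onto} $\Delta$; everything you actually use is fine.
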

\begin{proof}
   Set $\Gamma = \Gal(\crp{F}(\eps)/\crp{F})$.
   As $\Gamma$ is abelian, every field between 
   $\crp{F}$ and $\crp{F}(\eps)$ is normal over $\crp{F}$.
   In particular, $\Gamma$ acts on $\crp{E}$. 
   Since $\crp{E}$ is a $G$\nbd algebra by
   assumption, the group $G$ acts on $\crp{E}$, too. 
   We let $U$ be the pullback in 
   \[ \begin{tikzcd}
           U \rar \dar & \Gamma \dar \\
           G \rar & \Aut(\crp{E}) \rlap{,}
     \end{tikzcd}
   \]
   that is,
   \[ U = \{(g,\sigma)\in G\times \Gamma
              \mid x^g=x^{\sigma}
            \text{ for all } x\in \crp{E}\}.
   \]
   Let $C=\erz{\eps}$. 
   The group $U$ acts on $C$ by
   $c^{(g,\sigma)}=c^\sigma$.
   Let  $\oG=UC$ be the semidirect
   product of $U$ and $C$ with respect to this action.
   The map $\kappa\colon \oG \to G$ sending
   $(g,\sigma)c$ to $g$ is an epimorphism.
     (The surjectivity follows from the 
      well known fact that every 
      $\crp{F}$\nbd automorphism of $\crp{E}$
      extends to one of $\crp{F}(\eps)$.)
   Let $N$ be the kernel of $\kappa $.
   Then $N=AC$, where
   \[ A=\{(1,\sigma)\mid x^{\sigma}=x
   \text{ for all } 
   x\in \crp{E}\} \iso \Gal(\crp{F}(\eps)/\crp{E}).
   \]
   Set $V=\crp{F}(\eps)$ and define a $\oG$-module structure 
   on $V$ by
   \[ v(g,\sigma)c = v^{\sigma} c
      \quad\text{for $v\in \crp{F}(\eps)$,
                 $(g,\sigma)\in U$ and  $c\in C$.} 
   \]
   
   We claim that $\crp{E}\iso \enmo_{\crp{F}N}(V)$ as
   $G$\nbd algebras.
   For $x\in \crp{E}$, 
   right multiplication
   $\rho_x\colon v\mapsto vx$ ($v\in V$)
   is an $\crp{F}N$\nbd endomorphism.
   The map $x\mapsto \rho_x$
   defines an injection
   of $\crp{F}$\nbd algebras
   $\crp{E}\to \enmo_{\crp{F}N}(V)$.   
   Since $\crp{F}C$ acts as $\crp{F}(\eps)$ on $V$ from the
   right, $\enmo_{\crp{F}N}(V)$ can be identified with a
   subfield of $\crp{F}(\eps)$ (via the above map $x\mapsto \rho_x$), 
   and since $A$ acts as
   $\Gal(\crp{F}(\eps)/\crp{E})$ on $V$, this subfield is
   just $\crp{E}$.

   It remains to show that
   $x\mapsto \rho_x$ commutes with the action of $G$.
   Remember that for $g\in G$, the endomorphism 
   $(\rho_x)^g$ is defined by 
   $v\mapsto v\hat{g}^{-1}\rho_x \hat{g}$, 
   where $\hat{g}\in \oG$ is some element
   with $\hat{g}\kappa =g$.
   Here we can choose 
   $\hat{g}=(g,\sigma)$ with suitable $\sigma\in \Gamma$.      
   We get that
   \[ v(\rho_x)^g= v(g^{-1},\sigma^{-1})x(g,\sigma)
        =  v^{\sigma^{-1}}x(g,\sigma)
        = vx^{\sigma} = v x^g ,\]
   where the last equation follows from the fact that
   $(g,\sigma)\in U$ and thus $x^{\sigma}=x^g$.
   So $(\rho_x)^g = \rho_{x^g}$,
   and the proof is finished.
\end{proof}
From the proof of the last lemma, we get the following 
more precise result:
\begin{cor}
    Assume the hypotheses of Lemma~\ref{l:scid}.
    Then $\algcls{\crp{E}}=\brcls{\theta}{\kappa}{\crp{F}}$,
    where $(\theta,\kappa)$ has the following properties:
    \begin{enums}
    \item $N:=\Ker\kappa = AC$ is the semidirect product
          of a cyclic group $C\iso\erz{\eps}$ 
          and a subgroup $A$ of $\Aut(C)$
          such that $A\iso \Gal(\crp{F}(\eps)/\crp{E})$.
    \item $\theta = \lambda^N$, where $\lambda $ is a 
          faithful linear character of $C$.
    \end{enums}
\end{cor}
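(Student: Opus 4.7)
The plan is to take the very Clifford pair $(\theta,\kappa)$ constructed in the proof of Lemma~\ref{l:scid} and make its ingredients explicit. Part~(a) is essentially a readout of the construction; for part~(b), I would write down a faithful $\lambda\in\Lin C$ and verify $\brcls{\lambda^N}{\kappa}{\crp{F}}=\algcls{\crp{E}}$.

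For (a), the subgroup $N=\Ker\kappa$ of $\oG=UC$ is generated by $C=\erz{\eps}$ together with $A=\Ker(U\to G)=\{(1,\sigma): \sigma|_{\crp{E}}=\mathrm{id}\}\iso\Gal(\crp{F}(\eps)/\crp{E})$. Since $A\leq U$ and $U\cap C=1$ in the semidirect product $\oG=UC$, we have $A\cap C=1$; and $A$ acts on $C$ through the inclusion $A\leq\Gamma\leq\Aut(C)$, so $N=A\ltimes C$.

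For (b), I take $\lambda\in\Lin C$ with $\lambda(\eps)=\eps$ and set $\theta:=\lambda^N$. Three things need checking. First, $\lambda^N\in\Irr N$: the inertia group of $\lambda$ in $N$ equals $C$ because $A\leq\Gamma$ acts faithfully on $C=\erz{\eps}$ (the cyclotomic action), so every nontrivial element of $A$ moves $\lambda$; Clifford theory then gives irreducibility of $\lambda^N$. Second, $V$ lies in the Wedderburn block of $\theta$: restricting $V$ to $C$, the cyclic $\crp{F}C$-module generated by $1\in V$ is all of $V$, and extending scalars identifies $V_\compl|_C\iso\bigoplus_{\tau\in\Gamma}\lambda^\tau$, so $\lambda$ is a constituent of $V|_C$; Frobenius reciprocity then makes $\lambda^N$ a constituent of $V_\compl|_N$. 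By Lemma~\ref{l:scid} the endomorphism ring $\enmo_{\crp{F}N}(V)\iso\crp{E}$ is a field, so $V$ is simple over $\crp{F}N$ with Schur index one, whence $V_\compl|_N$ is the sum of the full Galois orbit of $\theta$ alone and $\crp{F}(\theta)=\crp{E}$. Third, $V$ is $\theta$-quasihomogeneous: since $\Z(\theta,\kappa,\crp{F})\iso\crp{E}$ is a field, $(\theta,\kappa)$ is semi-invariant, so $(e_{\theta,\crp{F}})^G=e_{\theta,\crp{F}}$, and $V=V\,e_{\theta,\crp{F}}=V\,(e_{\theta,\crp{F}})^G$.

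With these three facts in hand, the definition of the Brauer-Clifford class gives $\brcls{\theta}{\kappa}{\crp{F}}=[\enmo_{\crp{F}N}(V)]=\algcls{\crp{E}}$, which is the content of the corollary. The only nontrivial ingredient is the identification of $\theta$ in step~(ii); this combines Frobenius reciprocity (to produce $\lambda^N$ as a constituent) with the observation that a commutative endomorphism ring forces Schur index one (to rule out any other Galois orbits). Everything else is bookkeeping built into the construction from Lemma~\ref{l:scid}.
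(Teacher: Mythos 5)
Your proposal is correct and follows essentially the same route as the paper: it reuses the pair $(\theta,\kappa)$ and the module $V=\crp{F}(\eps)$ from the proof of Lemma~\ref{l:scid}, takes $\lambda$ to be the embedding of $C=\erz{\eps}$ viewed as a linear character, computes that $V_C$ affords $\sum_{\sigma\in\Gamma}\lambda^{\sigma}$, and concludes via quasihomogeneity that $\brcls{\theta}{\kappa}{\crp{F}}=\algcls{\crp{E}}$. You merely spell out details the paper leaves implicit (irreducibility of $\lambda^N$, simplicity of $V$ over $\crp{F}N$ and the quasihomogeneity check); note that your step on semi-invariance is not even needed, since $V=Ve_{\theta,\crp{F}}$ already gives $V(e_{\theta,\crp{F}})^G=V$.
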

\begin{proof}
  Let $C=\erz{\eps}$, 
  $\oG$ and $V$ be as in the proof of Lemma~\ref{l:scid}.
  View the embedding
   $\lambda\colon C\into \crp{F}(\eps)$ 
  as linear character of
  $C$.
  The $\crp{F}C$-module $V=\crp{F}(\eps)$ 
  affords the 
  character 
  $\Tr_{\crp{F}}^{\crp{F}(\eps)}(\lambda)
     = \sum_{\sigma\in \Gamma} \lambda^{\sigma}$,
  where $\Gamma= \Gal(\crp{F}(\eps)/\crp{F})$.
  Since $\theta=\lambda^N\in \Irr N$,
  the character of $V_N$ contains $\theta$ as irreducible constituent.
  This proves the corollary.
\end{proof}
In the next result, we only assume that
$Z$ is a commutative $G$\nbd ring and that
$\crp{F}\subseteq Z^G$ is a field.
\begin{lemma}\label{l:scinv}
  If $S$ is a Schur $G$-algebra over $\crp{F}$,
  then $S^{\text{op}}$ is a Schur $G$-algebra over 
  $\crp{F}$.
  In fact, when $\algcls{S}=\brcls{\theta}{\kappa}{\crp{F}}$,
  then $\algcls{S^{\text{op}}}= \brcls{\overline{\theta}}{\kappa}{\crp{F}}$.
\end{lemma}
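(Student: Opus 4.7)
The approach is to realize $S^{\text{op}}$ as the endomorphism algebra of the $\crp{F}$-dual of a $\theta$-quasihomogeneous module and observe that this dual is itself $\overline{\theta}$-quasihomogeneous. Choose an $\crp{F}\oG$-module $V$ with $V = V(e_{\theta,\crp{F}})^G$ and $S = \enmo_{\crp{F}N}(V)$, and set $V^* := \Hom_{\crp{F}}(V,\crp{F})$, regarded as a right $\crp{F}\oG$-module via $(f \cdot g)(v) := f(vg^{-1})$. A direct character computation shows that the character of $V^*_N$ is the complex conjugate of the character of $V_N$. Since complex conjugation commutes with the $\Gamma \times G$-action on $\Irr N$, every simple $\crp{F}N$-constituent of $V^*$ lies in the $\Gamma \times G$-orbit of $\overline{\theta}$. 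Hence $V^* = V^*(e_{\overline{\theta},\crp{F}})^G$, so $V^*$ is $\overline{\theta}$-quasihomogeneous, and $\enmo_{\crp{F}N}(V^*)$ represents $\brcls{\overline{\theta}}{\kappa}{\crp{F}}$.

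Next I define the transpose $\Phi\colon S \to \enmo_{\crp{F}N}(V^*)$ by $\Phi(\phi)(f) := f\circ \phi$. From the $\crp{F}N$-linearity of $\phi$ together with the explicit formula for the $N$-action on $V^*$, one checks that $\Phi(\phi)$ is itself $\crp{F}N$-linear. Because composition of maps is reversed under duality, $\Phi$ is an $\crp{F}$-algebra anti-isomorphism (injectivity is formal and surjectivity follows from $\dim_{\crp{F}}V < \infty$), equivalently an isomorphism $S^{\text{op}} \iso \enmo_{\crp{F}N}(V^*)$ of $\crp{F}$-algebras.

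The main obstacle, and indeed the only point that needs care, is the $G$-equivariance of $\Phi$, which comes down to bookkeeping of left/right conventions. For $g\in G$ and a lift $\hat g\in \oG$, write $R_{\hat g}\colon v \mapsto v\hat g$ on $V$, so that the action on $\enmo_{\crp{F}N}(V)$ is $\phi^g = R_{\hat g}^{-1}\phi R_{\hat g}$. The corresponding action of $\hat g$ on $V^*$ sends $f$ to $f\circ R_{\hat g^{-1}}$, and unwinding the definitions gives
\[
  \bigl(\Phi(\phi)\bigr)^g(f) \;=\; f\circ R_{\hat g^{-1}}\circ \phi \circ R_{\hat g} \;=\; f\circ \phi^g \;=\; \Phi(\phi^g)(f).
\]
Thus $\Phi$ is an isomorphism of $G$-algebras over $\crp{F}$; combined with the $\overline{\theta}$-quasihomogeneity of $V^*$, this yields $\algcls{S^{\text{op}}} = \brcls{\overline{\theta}}{\kappa}{\crp{F}}$, proving that $S^{\text{op}}$ is indeed a Schur $G$-algebra over $\crp{F}$.
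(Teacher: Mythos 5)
Your proposal is correct and follows essentially the same route as the paper: pass to the $\crp{F}$-dual $V^{*}=\Hom_{\crp{F}}(V,\crp{F})$ with the $g^{-1}$-twisted right $\oG$-action, show the transpose gives a $G$-algebra isomorphism $S^{\text{op}}\iso\enmo_{\crp{F}N}(V^{*})$, and use that $V^{*}$ affords the complex-conjugate character, so it is $\overline{\theta}$-quasihomogeneous. You merely spell out the $G$-equivariance and the orbit argument that the paper cites as well known (modulo a harmless left/right composition-order slip in your displayed conjugation formula).
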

\begin{proof}
  Suppose that
  $\oG/N \iso G$ and that $V$ is a (right) $\crp{F}\oG$\nbd module such
  that $S= \enmo_{\crp{F}N}(V)$.
  Let $V'= \Hom_{\crp{F}}(V,\crp{F})$. Then
  $V'$ becomes a right $\crp{F}\oG$\nbd module by
    $v (v'g) = (vg^{-1})v'$ for $v\in V$, $v'\in V'$.
    Define $\alpha\colon S^{\text{op}}\to \enmo_{\crp{F}N}(V')$
    by $v((v') s^{\alpha}) = (vs)v'$.
    Then $\alpha$ is an isomorphism of
    $G$\nbd algebras. In fact, with the same definition
    one gets an isomorphism from
    $\enmo_{\crp{F}}(V)^{\text{op}}$ onto
    $\enmo_{\crp{F}}(V')$ as $\oG$\nbd algebras, as is well known.
    The first part of the lemma follows from this fact.
    
    For the second, recall that when $V$ affords the character $\chi$, then
    $V'$ affords the character
    $g\mapsto \chi(g^{-1})=\overline{\chi(g)}$.
    Thus if $\theta$ is an irreducible constituent of the character
    ov $V_{N}$, then
    $\overline{\theta}$ is an irreducible constituent
    of the character of $(V')_{N}$.
    The proof is finished.
\end{proof}
Our next goal is to show that when
$a_1$ and $a_2\in \SC(G,Z)$, 
      then $a_1a_2\in \SC(G,Z)$.
For simplicity of notation, we assume again that
$Z=\crp{E}$ is a subfield of $\compl$.
Let $(\theta_i,\kappa_i)$ be two Clifford pairs
over the same group $G$
such that 
$\Z(\theta_1,\kappa_1,\crp{F})\iso \crp{E}
 \iso \Z(\theta_2,\kappa_2,\crp{F})$
as $G$\nbd algebras.
We want to construct a Clifford pair
$(\theta,\kappa)$ such that
\[ \brcls{\theta}{\kappa}{\crp{F}}
    = \brcls{\theta_1}{\kappa_1}{\crp{F}}
      \brcls{\theta_2}{\kappa_2}{\crp{F}}.
\]
For $i=1$, $2$ let
\[ \begin{tikzcd}
           1 \rar & N_i \rar & G_i \rar{\kappa_i} &G \rar & 1
   \end{tikzcd}
\]
be the exact sequence of groups
belonging to the Clifford pair $(\theta_i, \kappa_i)$.
Let $\oG=\{(g_1,g_2)\in G_1\times G_2
              \mid g_1\kappa_1=g_2\kappa_2\}$
be the pullback of the maps $\kappa_1$ and $\kappa_2$
and let $\kappa\colon \oG\to G$ be the composition
$\oG\to G_i\to G$:
\[ \begin{tikzcd}
       \oG \dar \rar \drar[color=red]{\kappa}
           & G_1 \dar{\kappa_1} 
        \\
       G_2 \rar{\kappa_2} 
           & G
   \end{tikzcd}.
\]
We also write $\kappa_1 \times_{G} \kappa_2$ for this map.
($G_1\times_G G_2$
  is a common notation for the pullback $\oG$.)
Then $\kappa=\kappa_1\times_G \kappa_2$ is surjective,
since both $\kappa_1$ and $\kappa_2$ are,
and the kernel is $N_1\times N_2$.
Thus we have an exact sequence 
\[ \begin{tikzcd}
        1  \rar & N_1 \times N_2 \rar & \oG \rar{\kappa} & G \rar & 1.
   \end{tikzcd}
\]
We write $\theta_1 \times \theta_2$
for the irreducible character of $N_1\times N_2$
defined by $(\theta_1 \times\theta_2)(n_1,n_2)=\theta_1(n_1)\theta_2(n_2)$.
We have now defined a new Clifford pair
\[  (\theta_1\times\theta_2,\kappa_1\times_G \kappa_2)\]
over $G$.
\begin{lemma}\label{l:scprod}
    In the above situation, we have
    \[ \brcls{\theta_1\times\theta_2}{\kappa_1\times_G\kappa_2}{\crp{F}}
       = \brcls{\theta_1}{\kappa_1}{\crp{F}} 
         \brcls{\theta_2}{\kappa_2}{\crp{F}}
    \]
    in $\BrCliff(G,\crp{E})$.
\end{lemma}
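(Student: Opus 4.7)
The plan is to realize both sides of the identity as the class of a single explicit Schur $G$-algebra. Pick $\theta_i$-quasihomogeneous $\crp{F}G_i$-modules $V_i$, so that $S_i := \enmo_{\crp{F}N_i}(V_i)$ is a representative of $\brcls{\theta_i}{\kappa_i}{\crp{F}}$. My candidate $\crp{F}\oG$-module will be a summand of $V := V_1 \tensor_{\crp{F}} V_2$, viewed as an $\crp{F}\oG$-module via the embedding $\oG \hookrightarrow G_1 \times G_2$; note that as $\crp{F}N$-modules (with $N = N_1 \times N_2$) we have $V_N = (V_1)_{N_1} \tensor_{\crp{F}} (V_2)_{N_2}$.

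Before cutting down, I would first show that the pair $(\theta_1 \times \theta_2, \kappa)$ is itself semi-invariant over $\crp{F}$ with center algebra $G$-isomorphic to $\crp{E}$. Each $\omega_{\theta_i}\colon Z_i := \Z(\theta_i,\kappa_i,\crp{F}) \iso \crp{E}$ is by assumption a $G$-algebra isomorphism, so the two homomorphisms $G_i \to \Gal(\crp{E}/\crp{F})$ associated to the semi-invariant characters $\theta_i$ (as in Section~\ref{sec:semiinv}) factor through the common $G$-action on $\crp{E}$. Hence for $(g_1,g_2) \in \oG$ lying over $g \in G$, the conjugate $(\theta_1 \times \theta_2)^{(g_1,g_2)}$ is the simultaneous Galois twist by $\alpha_g^{-1}$, so $e_{\theta_1 \times \theta_2,\crp{F}}$ is $\oG$-invariant and $\omega_{\theta_1 \times \theta_2}$ supplies the desired $G$-algebra isomorphism. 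Setting $W := V \cdot e_{\theta_1 \times \theta_2,\crp{F}}$ then gives a nonzero (since $\theta_1 \times \theta_2$ is a constituent of $V_N$) $(\theta_1 \times \theta_2)$-quasihomogeneous $\crp{F}\oG$-submodule of $V$, so $\enmo_{\crp{F}N}(W) \in \brcls{\theta_1 \times \theta_2}{\kappa_1 \times_G \kappa_2}{\crp{F}}$.

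Finally I would identify this endomorphism algebra with $S_1 \tensor_{\crp{E}} S_2$. The canonical $G$-algebra isomorphism $S_1 \tensor_{\crp{F}} S_2 \iso \enmo_{\crp{F}N}(V)$ has center $Z_1 \tensor_{\crp{F}} Z_2$, which via $\omega_{\theta_1} \tensor \omega_{\theta_2}$ becomes $\crp{E} \tensor_{\crp{F}} \crp{E}$ with the diagonal $G$-action. Under this identification, the multiplicative identity $\omega_{\theta_1 \times \theta_2}(z_1 \tensor z_2) = \omega_{\theta_1}(z_1) \omega_{\theta_2}(z_2)$ forces $e_{\theta_1 \times \theta_2,\crp{F}}$ to correspond to the unique primitive $G$-invariant idempotent $\tilde{e} \in \crp{E} \tensor_{\crp{F}} \crp{E}$, namely the one cutting out the diagonal copy of $\crp{E}$ (i.e.\ the idempotent for which $(\crp{E} \tensor_{\crp{F}} \crp{E}) \tilde{e} \iso \crp{E}$ via multiplication). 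Consequently
\[ \enmo_{\crp{F}N}(W) \iso (S_1 \tensor_{\crp{F}} S_2)\tilde{e} \iso S_1 \tensor_{\crp{E}} S_2, \]
and passing to Brauer-Clifford classes yields the claim. The main obstacle is this last identification of idempotents and the accompanying base-change from $\tensor_{\crp{F}}$ to $\tensor_{\crp{E}}$: it is what converts the naive external tensor product of Schur $G$-algebras into the product used to define multiplication in $\BrCliff(G,\crp{E})$. Everything else is bookkeeping based on Sections~\ref{sec:brauercliff} and~\ref{sec:semiinv}.
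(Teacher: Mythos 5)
Your argument is correct, and it reaches the paper's representative by a slightly different route. The paper takes the internal tensor product $V_1\tensor_{\crp{E}}V_2$ from the start, defines the $\oG$-action on it directly, and verifies by an explicit computation (using $g_1\kappa_1=g_2\kappa_2$) that this action is well defined over $\crp{E}$; the endomorphism algebra is then identified with $S_1\tensor_{\crp{E}}S_2$ at once. You instead start from the external product $V_1\tensor_{\crp{F}}V_2$, where the $\oG$-action is automatic, and then cut by the block idempotent $e_{\theta_1\times\theta_2,\crp{F}}$, transferring the truncation through $\enmo_{\crp{F}N}(V)\iso S_1\tensor_{\crp{F}}S_2$ and the identification $Z_1\tensor_{\crp{F}}Z_2\iso\crp{E}\tensor_{\crp{F}}\crp{E}$. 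Since $(V_1\tensor_{\crp{F}}V_2)e_{\theta_1\times\theta_2,\crp{F}}\iso V_1\tensor_{\crp{E}}V_2$, the two constructions produce the same module; what you buy is that the well-definedness computation disappears, at the price of the idempotent bookkeeping and the base-change $(S_1\tensor_{\crp{F}}S_2)\tilde e\iso S_1\tensor_{\crp{E}}S_2$, which is standard. Your treatment of semi-invariance of $\theta_1\times\theta_2$ agrees with the paper's.

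One small inaccuracy to fix: $\tilde e$ is \emph{not} the unique primitive $G$-invariant idempotent of $\crp{E}\tensor_{\crp{F}}\crp{E}$. Writing $\crp{E}\tensor_{\crp{F}}\crp{E}\iso\prod_{\sigma\in\Gal(\crp{E}/\crp{F})}\crp{E}$ with components $x\tensor y\mapsto x\,\sigma(y)$, the diagonal $G$-action fixes \emph{every} primitive idempotent, because $\Gal(\crp{E}/\crp{F})$ is abelian and hence each $\sigma$ commutes with the automorphisms $\alpha_g$. So uniqueness cannot be used to pin down $\tilde e$. Fortunately your actual identification does not need it: the multiplicativity $\omega_{\theta_1\times\theta_2}(z_1\tensor z_2)=\omega_{\theta_1}(z_1)\omega_{\theta_2}(z_2)$ says that, under $\omega_{\theta_1}\tensor\omega_{\theta_2}$, the central character $\omega_{\theta_1\times\theta_2}$ becomes the multiplication map $\crp{E}\tensor_{\crp{F}}\crp{E}\to\crp{E}$, and $e_{\theta_1\times\theta_2,\crp{F}}$ is the unique primitive idempotent of $Z_1\tensor_{\crp{F}}Z_2$ not annihilated by it; that alone identifies it with $\tilde e$. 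With that phrase corrected, the proof is complete and equivalent in substance to the paper's.
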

\begin{proof}
  That $\Z(\theta_1,\kappa_1,\crp{F})\iso
        \Z(\theta_2,\kappa_2,\crp{F})\iso \crp{E}$
  as $G$\nbd algebras means that
  $\crp{F}(\theta_1)=\crp{F}(\theta_2)=\crp{E}$
  and that for every $g\in G$ there is 
  $\alpha_g\in \Gal(\crp{E}/\crp{F})$
  such that
  $\theta_1^{g\alpha_g}=\theta_1$ and
  $\theta_2^{g\alpha_g}=\theta_2$.
  Clearly, we also have
  $\crp{F}(\theta_1\times\theta_2)=\crp{E}$.
  Since 
  $(\theta_1\times\theta_2)^{g\alpha_g}
   = \theta_1^{g\alpha_g}\times \theta_2^{g\alpha_g}
   = \theta_1\times \theta_2$,
  we have that
  $\Z(\theta_1\times \theta_2,\kappa_1\times_G\kappa_2,\crp{F})
  \iso \crp{E}$ as $G$\nbd algebras.
      
  Suppose that
  $V_i $ are $\theta_i$\nbd quasihomogeneous
  $\oG_i$\nbd modules with
  $S_i = \enmo_{\crp{F}N_i}(V_i)$ as $G$\nbd algebras.
  We have to show that
  \[ S_1 \tensor_{\crp{E}} S_2 
     \in
        \brcls{\theta_1\times\theta_2}{\kappa_1\times_G\kappa_2}{\crp{F}}.
  \]
  
  Since $\crp{E}\iso \Z(\crp{F}N_i e_{\theta_i,\crp{F}})$
  for $i=1$, $2$, we can view $V_1$ and $V_2$ as vector spaces over
  $\crp{E}$.
  (The vector space structures depend on the concrete isomorphisms,
   which we can assume to be given by $\omega_{\theta_i}$,
   but in fact the arguments to follow work for any choices
   of isomorphisms
   $\crp{E}\iso \Z(\crp{F}N_i e_{\theta_i,\crp{F}})$.)
  
  Let $V= V_1\tensor_{\crp{E}}V_2$ and define an action of $\oG$
   on $V$ by
   $(v_1\tensor v_2)(g_1,g_2)=v_1g_1\tensor v_2g_2$.
   This is well defined: Let $z\in \crp{E}$. Then
   \begin{align*}
     (v_1z\tensor v_2 )(g_1,g_2)
       &= v_1z g_1 \tensor v_2 g_2
       \\
       &= v_1 g_1 z^{g_1\kappa_1}\tensor v_2g_2
       \\
       &= v_1 g_1 \tensor z^{g_1\kappa_1} (v_2g_2)
       \\
       &= v_1 g_1 \tensor (v_2g_2)z^{g_2\kappa_2}
            & &\text{since $g_1\kappa_1=g_2\kappa_2$}
       \\
       &= v_1 g_1 \tensor v_2 z g_2
       \\
        &= (v_1\tensor zv_2)(g_1, g_2).
   \end{align*}
   In this way, $V=V_1\tensor_{\crp{E}} V_2$ becomes an
   $\crp{F}\oG$\nbd module.
   
   As $\crp{E}N_i$\nbd module,
   the character of $V_i$ is a multiple of $\theta_i$.
   Thus $V$ as an $\crp{E}[N_1\times N_2]$ module has as 
   character a multiple of $\theta_1\times \theta_2$.
   It follows that $V$ as 
   $\crp{F}[N_1\times N_2]$\nbd module 
   is $(\theta_1 \times \theta_2)$\nbd quasihomogeneous.

   Since 
   $\crp{E}$ is isomorphic to the centers
   of the various algebras involved,
   we have
   \begin{align*}
     \enmo_{\crp{F}[N_1\times N_2]}(V)
       &= \enmo_{\crp{E}[N_1\times N_2]} (V_1 \tensor_{\crp{E}} V_2)
       \\
       &\iso \enmo_{\crp{E}N_1}(V_1) \tensor_{\crp{E}}
             \enmo_{\crp{E}N_2}(V_2)
       \\
       &= \enmo_{\crp{F}N_1}(V_1) \tensor_{\crp{E}}
                        \enmo_{\crp{F}N_2}(V_2)
       \\
       &= S_1 \tensor_{\crp{E}} S_2.
   \end{align*}
   The isomorphism sends
   $s_1\tensor s_2 \in S_1\tensor_{\crp{E}}S_2 $ 
   to the map
   $(v_1\tensor v_2) \mapsto (v_1s_1)\tensor (v_2s_2)$.
   It is easy to see that this is an isomorphism of
   $G$-algebras.
   Thus
   $S_1\tensor_{\crp{E}}S_2 \iso \enmo_{\crp{E}[N_1\times N_2]}(V)
    \in \brcls{\theta_1\times\theta_2}{\kappa_1\times \kappa_2}{\crp{F}}$
   as was to be shown.
\end{proof}
\begin{remark}
  The construction of the Clifford pair
  $(\theta_1\times \theta_2, \kappa_1\times_G \kappa_2)$
  does not depend on the fact that the center algebras
  of $(\theta_1,\kappa_1)$ and $(\theta_2,\kappa_2)$ are fields
  and isomorphic.
  Without this assumption, one can show the following:
  The center algebra 
  $Z:=\Z(\theta_1\times\theta_2,\kappa_1\times_G \kappa_2,\crp{F})$
  is a direct summand of 
  $Z_1\tensor_{\crp{F}}
   Z_2$, where
  $Z_i= \Z(\theta_i,\kappa_i,\crp{F})$.
  The isomorphism type of $Z$ depends on the choice
  of $\theta_i$ in its orbit under the action 
  of $G\times \Gal(\crp{F}(\theta_i)/\crp{F})$.
  This is different from the situation in
  Lemma~\ref{l:scprod}, where
  $\Z(\theta_1\times\theta_2^{\alpha}, \kappa_1\times\kappa_2,\crp{F})
   \iso \crp{E}$ as $G$\nbd algebras
  for all  $\alpha\in \Gal(\crp{E}/\crp{F})$.
  
  In the general situation,
  it follows that there are $G$\nbd algebra homomorphisms
  $Z_i \to Z_1\tensor_{\crp{F}}Z_2\to  Z$.
  If $S_i$ is a $G$\nbd algebra in 
  $\brcls{\theta_i}{\kappa_i}{\crp{F}}$, 
  then 
  $(S_1\tensor_{Z_1}Z)\tensor_Z (S_2\tensor_{Z_2} Z)$
  is in 
  $\brcls{\theta_1\times\theta_2}{\kappa_1\times_G \kappa_2}{\crp{F}}$.
  Since we do not need these more general results, 
  we do not pause to prove them.
\end{remark}
We are now ready to prove Theorem~\ref{t:main} for $Z=\crp{E}$ a field.
\begin{thm}\label{t:scgroup}
  Let\/ $\crp{E}$ be a field on which a group $G$ acts.
  Suppose $\crp{E}$ is contained in a cyclotomic extension
  of\/ $\crp{F}\subseteq\crp{E}^G$.
  Then the Schur-Clifford group $\SC(G,\crp{E})$
   is  a subgroup of
  the Brauer-Clifford group $\BrCliff(G,\crp{E})$.
\end{thm}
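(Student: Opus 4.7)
The plan is to verify the subgroup criterion for $\SC(G,\crp{E}) \subseteq \BrCliff(G,\crp{E})$: namely, that $\SC(G,\crp{E})$ is nonempty and closed under both the group product and inversion. All three facts have essentially been prepared by the preceding three lemmas, so the proof should be a short assembly.

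First, for nonemptiness (and identification of the identity element), I would invoke Lemma~\ref{l:scid}: since $\crp{E}$ lies in a cyclotomic extension of $\crp{F} \subseteq \crp{E}^G$, the field $\crp{E}$ itself is a Schur $G$\nbd algebra over $\crp{F}$. The corresponding class is the identity of $\BrCliff(G,\crp{E})$, so $1 \in \SC(G,\crp{E})$.

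Next, for closure under multiplication, I would take two elements of $\SC(G,\crp{E})$ and represent them by Schur $G$\nbd algebras $S_i \in \brcls{\theta_i}{\kappa_i}{\crp{F}}$ with $\Z(\theta_i,\kappa_i,\crp{F}) \iso \crp{E}$ as $G$\nbd algebras (for $i=1,2$). Lemma~\ref{l:scprod} then shows that the product class is represented by $\brcls{\theta_1 \times \theta_2}{\kappa_1 \times_G \kappa_2}{\crp{F}}$, which is a Brauer-Clifford class of a Clifford pair whose center algebra is again isomorphic to $\crp{E}$ as $G$\nbd algebra. Hence the product lies in $\SC(G,\crp{E})$.

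Finally, for closure under inverses, I would use that in any Brauer-Clifford group the inverse of $\algcls{S}$ is $\algcls{S^{\text{op}}}$ (this is a standard feature of $\BrCliff(G,R)$, inherited from the behavior of Azumaya algebras under the tensor product). Lemma~\ref{l:scinv} then says that if $S$ is a Schur $G$\nbd algebra over $\crp{F}$ with $\algcls{S} = \brcls{\theta}{\kappa}{\crp{F}}$, so is $S^{\text{op}}$, with class $\brcls{\overline{\theta}}{\kappa}{\crp{F}}$; the center algebra is unchanged (complex conjugation fixes $\crp{E}$ setwise as a subfield of $\compl$, and the $G$\nbd action is the same since the homomorphism $g \mapsto \alpha_g$ from the discussion of semi-invariance is insensitive to replacing $\theta$ by $\overline{\theta}$). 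Thus $\algcls{S}^{-1} \in \SC(G,\crp{E})$. There is no real obstacle left in this theorem itself; the substance sits in Lemmas~\ref{l:scid}, \ref{l:scinv}, and~\ref{l:scprod}, with the pullback construction in Lemma~\ref{l:scprod} being the most delicate input.
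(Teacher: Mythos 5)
Your proposal is correct and follows essentially the same route as the paper: the paper's proof of Theorem~\ref{t:scgroup} is exactly the assembly of Lemma~\ref{l:scid} (giving $\algcls{\crp{E}}\in\SC(G,\crp{E})$), Lemma~\ref{l:scinv} (inverses, via $\algcls{S}^{-1}=\algcls{S^{\text{op}}}=\brcls{\overline{\theta}}{\kappa}{\crp{F}}$), and Lemma~\ref{l:scprod} (products, via the pullback pair $(\theta_1\times\theta_2,\kappa_1\times_G\kappa_2)$). Your added remark that the center algebra and $G$\nbd action are unchanged when passing from $\theta$ to $\overline{\theta}$ is the same observation the paper makes about Galois conjugates in Section~\ref{sec:semiinv}, so nothing is missing.
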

We prove this together with a theorem about restricted 
Schur-Clifford groups.
\begin{thm}\label{t:scresgroup}
  In the situation of Theorem~\ref{t:scgroup},
  let $\mathcal{X}$ be a class of Clifford pairs
  $(\theta,\kappa)$ (over $G$),
  such that the following conditions hold:
  \begin{enums}
  \item $\SC_{\mathcal{X}}(G,\crp{E})\neq \emptyset$,
  \item When $(\theta,\kappa)\in \mathcal{X}$,
        then $(\overline{\theta},\kappa)\in \mathcal{X}$,
  \item When $(\theta_1,\kappa_1)$ and 
        $(\theta_2,\kappa_2)\in \mathcal{X}$,
        then $(\theta_1\times \theta_2,\kappa_1\times_G\kappa_2)
        \in \mathcal{X}$.
  \end{enums}  
  Then $\SC_{\mathcal{X}}(G,\crp{E})$ 
  is a subgroup of $\BrCliff(G,\crp{E})$.
\end{thm}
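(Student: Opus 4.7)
The plan is to verify the three standard subgroup conditions for $\SC_{\mathcal{X}}(G,\crp{E})$ inside $\BrCliff(G,\crp{E})$: nonemptiness, closure under products, and closure under inversion. Nonemptiness is exactly hypothesis~(a); once products and inverses are handled, the identity arises automatically as $aa^{-1}$ for any $a\in \SC_{\mathcal{X}}(G,\crp{E})$, so no separate identity argument is required.

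For closure under products, I would take two elements $a_i=\overline{\alpha_i}\brcls{\theta_i}{\kappa_i}{\crp{F}}\in \SC_{\mathcal{X}}(G,\crp{E})$ with $(\theta_i,\kappa_i)\in\mathcal{X}$ and $\alpha_i\colon\Z(\theta_i,\kappa_i,\crp{F})\to\crp{E}$ isomorphisms of $G$\nbd algebras. The pullback construction produces a new Clifford pair $(\theta_1\times\theta_2,\kappa_1\times_G\kappa_2)$, which by hypothesis~(c) lies in $\mathcal{X}$. Lemma~\ref{l:scprod} then identifies its Brauer-Clifford class with the product $a_1a_2$ in $\BrCliff(G,\crp{E})$, via an isomorphism $\alpha\colon\Z(\theta_1\times\theta_2,\kappa_1\times_G\kappa_2,\crp{F})\to\crp{E}$ of $G$\nbd algebras inherited from $\alpha_1$ and $\alpha_2$. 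Hence $a_1 a_2\in \SC_{\mathcal{X}}(G,\crp{E})$.

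For closure under inverses, I would let $a=\overline{\alpha}\brcls{\theta}{\kappa}{\crp{F}}$ be represented by a Schur $G$\nbd algebra $S$; its inverse in the Brauer-Clifford group is represented by $S^{\text{op}}$. Lemma~\ref{l:scinv} identifies $\algcls{S^{\text{op}}}$ with $\brcls{\overline{\theta}}{\kappa}{\crp{F}}$, and hypothesis~(b) places $(\overline{\theta},\kappa)$ in $\mathcal{X}$. Because $\Z(S)=\Z(S^{\text{op}})$ as $G$\nbd algebras, the same isomorphism $\alpha$ identifies the center algebra of $(\overline{\theta},\kappa)$ with $\crp{E}$, and therefore $a^{-1}\in \SC_{\mathcal{X}}(G,\crp{E})$.

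I expect the only subtlety to be the bookkeeping of the isomorphisms $\alpha$ between the various center algebras and $\crp{E}$. This is harmless in the present setting: all center algebras involved are fields isomorphic to $\crp{E}$ (as underlies the reduction to the case $Z=\crp{E}$), and, as recorded in Section~\ref{sec:semiinv}, every $\crp{F}$\nbd algebra isomorphism between two such fields is automatically a $G$\nbd algebra isomorphism, since $\Gal(\crp{E}/\crp{F})$ is abelian. Combining the three properties then gives $\SC_{\mathcal{X}}(G,\crp{E})\leq \BrCliff(G,\crp{E})$, as required.
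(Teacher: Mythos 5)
Your proposal is correct and takes essentially the same route as the paper: nonemptiness from hypothesis (a), inverses via Lemma~\ref{l:scinv} together with hypothesis (b), and products via Lemma~\ref{l:scprod} together with hypothesis (c), with only the cosmetic difference that you obtain the identity as $aa^{-1}$ rather than invoking Lemma~\ref{l:scid}. The bookkeeping remark that any $\crp{F}$\nbd algebra isomorphism between the relevant center fields is automatically a $G$\nbd algebra isomorphism matches the discussion in Section~\ref{sec:semiinv} and is exactly what the paper relies on.
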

\begin{proof}[Proof of Theorem~\ref{t:scgroup} and~\ref{t:scresgroup}]
  Recall that
  \[\SC_{\mathcal{X}}(G,\crp{E})
      =\{ \brcls{\theta}{\kappa}{\crp{F}}
          \mid (\theta,\kappa)\in \mathcal{X}
          \quad\text{and}\quad
          \Z(\theta,\kappa,\crp{F})\iso \crp{E} \}.
  \]
  Lemma~\ref{l:scid} yields that
  $\algcls{\crp{E}}\in \SC(G,\crp{E})$.
  Lemma~\ref{l:scinv} yields that when
  $\brcls{\theta}{\kappa}{\crp{F}}\in \SC_{\mathcal{X}}(G,\crp{E})$,
  then 
  \[\brcls{\theta}{\kappa}{\crp{F}}^{-1}
    = 
    \brcls{\overline{\theta}}{\kappa}{\crp{F}}
    \in \SC_{\mathcal{X}}(G,\crp{E}).
  \]
  Finally,
  when $\brcls{\theta_1}{\kappa_1}{\crp{F}}$
  and $\brcls{\theta_2}{\kappa_2}{\crp{F}}
      \in \SC_{\mathcal{X}}(G,\crp{E})$,
  then $\Z(\theta_1,\kappa_1,\crp{F})
       \iso \crp{E}
       \iso \Z(\theta_2,\kappa_2,\crp{F})$
  as $G$\nbd algebras,
  and Lemma~\ref{l:scprod}
  yields that 
  \[ \brcls{\theta_1}{\kappa_1}{\crp{F}}
    \brcls{\theta_2}{\kappa_2}{\crp{F}}
    = 
    \brcls{\theta_1\times\theta_2}{\kappa_1\times_G\kappa_2}{\crp{F}}
   \in \SC_{\mathcal{X}} (G,\crp{E}).
  \qedhere
  \]
\end{proof}
The conditions in Theorem~\ref{t:scresgroup}
for $\mathcal{X}$ are not necessary for 
$\SC_{\mathcal{X}}(G,\crp{E})$ to form a subgroup.
The following is an example:
\begin{cor}\label{c:cyclickernelclass}
  Let $\mathcal{C}$ be the class of semi-invariant
  Clifford pairs $(\theta,\kappa)$ such that
  $N=\Ker\kappa$ is cyclic.
  Suppose that $\crp{E}=\crp{F}(\eps)$
  for some root of unity $\eps$.
  Then $\SC_{\mathcal{C}}(G,\crp{E})$
  is a subgroup of
  $\BrCliff(G,\crp{E})$.
\end{cor}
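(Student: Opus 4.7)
The plan is to verify the three subgroup axioms for $\SC_{\mathcal{C}}(G,\crp{E})$ directly. Theorem~\ref{t:scresgroup} is \emph{not} available here, since $\mathcal{C}$ fails to be closed under the pullback construction: the kernel of $\kappa_1 \times_G \kappa_2$ is $N_1 \times N_2$, which is rarely cyclic.

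To see that the identity lies in $\SC_{\mathcal{C}}(G,\crp{E})$, I apply the corollary to Lemma~\ref{l:scid}. Because $\crp{E}=\crp{F}(\eps)$, the group $A=\Gal(\crp{F}(\eps)/\crp{E})$ is trivial, so the construction there produces $N=C$ cyclic and $\theta=\lambda$ a faithful linear character; hence $\algcls{\crp{E}}\in \SC_{\mathcal{C}}(G,\crp{E})$. For closure under inverses, Lemma~\ref{l:scinv} gives $\brcls{\theta}{\kappa}{\crp{F}}^{-1}=\brcls{\overline{\theta}}{\kappa}{\crp{F}}$; the kernel is unchanged and $\overline{\theta}$ is again an irreducible (hence linear) semi-invariant character of the cyclic group $\Ker\kappa$, so $(\overline{\theta},\kappa)\in\mathcal{C}$.

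The substantial step is closure under products. Given $(\theta_i,\kappa_i)\in\mathcal{C}$ for $i=1,2$, Lemma~\ref{l:scprod} identifies the product as $\brcls{\theta_1\times\theta_2}{\kappa_1\times_G\kappa_2}{\crp{F}}$. My plan is to replace this representative by one whose kernel is cyclic. Since each $N_i$ is cyclic, each $\theta_i$ is linear, so $\theta:=\theta_1\times\theta_2$ is a linear character of $N_1\times N_2$; the proof of Lemma~\ref{l:scprod} shows moreover that $\theta$ is semi-invariant over $\crp{F}$. Set $\oG=\oG_1\times_G\oG_2$ and $K=\Ker\theta$. Semi-invariance makes $K$ stable under $\oG$-conjugation, hence $K\nteq\oG$. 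The image of $\theta$ in $\compl^*$ is finite cyclic, so $(N_1\times N_2)/K$ is cyclic, and the induced pair $(\theta',\kappa')$ on $\oG/K\to G$ lies in $\mathcal{C}$.

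The main obstacle is to verify that this new pair represents the \emph{same} Brauer-Clifford class. I would settle this as follows. Because $\theta$ is linear with kernel $K$, every Galois conjugate $\theta^\sigma$ satisfies $\theta^\sigma(K)=\{1\}$, so the restriction to $N_1\times N_2$ of any $\theta$-quasihomogeneous $\crp{F}\oG$-module $V$ is a sum of one-dimensional characters that are trivial on $K$. Thus $V$ is naturally an $\crp{F}[\oG/K]$-module, and the ring $\enmo_{\crp{F}[N_1\times N_2]}(V)$ coincides with $\enmo_{\crp{F}[(N_1\times N_2)/K]}(V)$ as a $G$-algebra. This equality yields $\brcls{\theta}{\kappa_1\times_G\kappa_2}{\crp{F}}=\brcls{\theta'}{\kappa'}{\crp{F}}\in \SC_{\mathcal{C}}(G,\crp{E})$, completing the verification.
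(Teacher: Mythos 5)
Your proof is correct and is essentially the paper's own argument: the paper packages the same steps by applying Theorem~\ref{t:scresgroup} to the class $\mathcal{A}$ of abelian-kernel semi-invariant pairs and then showing $\SC_{\mathcal{A}}(G,\crp{E})=\SC_{\mathcal{C}}(G,\crp{E})$ via exactly your kernel-quotient trick ($K=\Ker\theta$ is normal in $\oG$ by semi-invariance, and the Brauer-Clifford class is unchanged on passing to $\oG/K$, where the kernel becomes cyclic because $\theta$ is linear). The mathematical content (the corollary to Lemma~\ref{l:scid} for the identity, Lemma~\ref{l:scinv} for inverses, Lemma~\ref{l:scprod} plus factoring out $\Ker\theta$ for products) coincides; only the organization differs.
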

\begin{proof}
  We begin by showing 
  $[\crp{E}]\in \SC_{\mathcal{C}}(G,\crp{E})$.
  The action of $G$ on $\crp{E}=\crp{F}(\eps)$
  yields an action of $G$ on $C=\erz{\eps}$.
  Let $\oG=GC$ be the semidirect product
  and $\kappa\colon \oG\to G$ the corresponding epimorphism
  with kernel $C$.
  View the inclusion $C\subset \crp{E}$ as a linear character
  $\lambda$.
  Make $V=\crp{E}$ into a $\oG$\nbd module
  by defining $v(gc)= v^g c$.
  Then $V$ is $\lambda$\nbd quasihomogeneous and
  $\enmo_{\crp{F}C}(V)= \enmo_{\crp{F}(\eps)}(V)\iso \crp{E}$
  as $G$\nbd algebras.
  Thus $[\crp{E}]= \brcls{\lambda}{\kappa}{\crp{F}}
        \in \SC_{\mathcal{C}}(G,\crp{E})$.
        
  Now let $\mathcal{A}$ be the class of semi-invariant 
  Clifford pairs over $G$ such that
  $\Ker\kappa$ is abelian.
  By Theorem~\ref{t:scresgroup} and the preceding paragraph,
  $\SC_{\mathcal{A}}(G,\crp{E})$ is a subgroup.
  Pick a Clifford pair $(\lambda,\kappa)\in \mathcal{A}$
  with corresponding exact sequence
  \[ \begin{tikzcd}
       1 \rar & N \rar[hook]   & \oG \rar{\kappa} & G \rar & 1.
     \end{tikzcd}
  \]
  Let $K= \Ker \lambda$.
  Since $\lambda$ is assumed to be semi-invariant over
  $\crp{F}$, it follows that
  $K$ is normal in $\oG$.
  View $\lambda$ as character $\lambda_1$ of $N/K$
  and let $\kappa_1\colon \oG/K\to G$ be the map
  induced by $\kappa$.
  From the definitions, it follows that
  $\brcls{\lambda}{\kappa}{\crp{F}}
   = 
   \brcls{\lambda_1}{\kappa_1}{\crp{F}}$.
  Obviously, $(\lambda_1,\kappa_1)\in \mathcal{C}$.
  Thus $\SC_{\mathcal{A}}(G,\crp{E})
        \subseteq 
        \SC_{\mathcal{C}}(G,\crp{E})$.
  The reverse inclusion is trivial, and thus
  $\SC_{\mathcal{C}}(G,\crp{E})
    =\SC_{\mathcal{A}}(G,\crp{E})$
  is a subgroup.  
\end{proof}

\section{Dependence on the field}
\label{sec:depfield}
In this section, we prove Proposition~\ref{p:fieldindep}
from the introduction in the case where 
$Z=\crp{E}$ is a field.
However, let us first note that the uniqueness of the field 
$\crp{K}\subseteq \compl$ in Proposition~\ref{p:fieldindep}
follows in any case, because when 
$Z \iso \Z(\theta,\kappa,\crp{F})$, then 
$Z^G$ is 
a finite abelian Galois extension of $\crp{F}$
and thus isomorphic (as algebra over $\crp{F}$) to exactly one
field $\crp{K} \subseteq \compl$.

\begin{lemma}\label{l:brclsfielddep}
  Let $(\theta,\kappa)$ be a Clifford pair over $G$ that is 
  semi-invariant over the field\/ $\crp{F}$.
  Let\/ $\crp{K}$ be a field such that\/
  $ \crp{F} \subseteq \crp{K}
            \subseteq \crp{F}(\theta)^G $.
  Then 
  $\brcls{\theta}{\kappa}{\crp{F}}
  = \brcls{\theta}{\kappa}{\crp{K}}$.
\end{lemma}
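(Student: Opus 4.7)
The plan is to exhibit a single $G$\nbd algebra $S$ that simultaneously represents both Brauer\nbd Clifford classes $\brcls{\theta}{\kappa}{\crp{F}}$ and $\brcls{\theta}{\kappa}{\crp{K}}$. First I would confirm that the statement makes sense by verifying that $\theta$ is semi-invariant over $\crp{K}$ as well. Writing $\alpha_g \in \Gal(\crp{F}(\theta)/\crp{F})$ for the cocycle with $\theta^{g\alpha_g} = \theta$ coming from semi-invariance over $\crp{F}$, the hypothesis $\crp{K} \subseteq \crp{F}(\theta)^G$ says precisely that every $\alpha_g$ fixes $\crp{K}$ pointwise, so $\alpha_g \in \Gal(\crp{F}(\theta)/\crp{K})$ witnesses semi-invariance of $\theta$ over $\crp{K}$. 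In particular $\Z(\theta,\kappa,\crp{K})$ is defined, and the central character $\omega_\theta$ yields compatible $G$\nbd algebra isomorphisms of both $\Z(\theta,\kappa,\crp{F})$ and $\Z(\theta,\kappa,\crp{K})$ with $\crp{F}(\theta) = \crp{K}(\theta)$, so both classes live naturally in $\BrCliff(G,\crp{F}(\theta))$ and the equality of the lemma has a well-defined meaning.

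Pick any $\theta$\nbd quasihomogeneous $\crp{F}\oG$\nbd module $V$ and set $S := \enmo_{\crp{F}N}(V) \in \brcls{\theta}{\kappa}{\crp{F}}$. I would promote $V$ to a $\crp{K}\oG$\nbd module using $\omega_\theta^{-1}\colon \crp{F}(\theta) \iso \Z(\crp{F}N)e_{\theta,\crp{F}}$ to let $\crp{F}(\theta)$, and in particular $\crp{K}$, act on $V = V e_{\theta,\crp{F}}$: for $z \in \crp{K}$ lifted to $\tilde z \in \Z(\crp{F}N)e_{\theta,\crp{F}}$, set $v \cdot z := v\tilde z$. The crux is that this $\crp{K}$\nbd action commutes with the $\oG$\nbd action on $V$. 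For $g \in \oG$, both $\tilde z$ and $\tilde z^g$ lie in $\Z(\crp{F}N)e_{\theta,\crp{F}}$, as $e_{\theta,\crp{F}}$ is $G$\nbd invariant, and the identity $\omega_\theta(\tilde z^g) = \omega_\theta(\tilde z)^{\alpha_g}$ from the semi-invariant setup, combined with $z \in \crp{K} \subseteq \crp{F}(\theta)^G$, gives $\omega_\theta(\tilde z^g) = z^{\alpha_g} = z = \omega_\theta(\tilde z)$. Injectivity of $\omega_\theta$ on $\Z(\crp{F}N)e_{\theta,\crp{F}}$ forces $\tilde z^g = \tilde z$, so $v\tilde z \cdot g = vg \cdot \tilde z^g = vg \cdot \tilde z$, confirming that $V$ is indeed a $\crp{K}\oG$\nbd module.

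It remains to verify that $V$, so viewed, is $\theta$\nbd quasihomogeneous over $\crp{K}N$ and that $\enmo_{\crp{K}N}(V) = S$ as $G$\nbd algebras. As an $\crp{F}N$\nbd module $V$ is a sum of copies of the simple module $U$ in the block $\crp{F}N e_{\theta,\crp{F}}$; the image of $\crp{K}N$ in $\enmo(U)$ coincides with that of $\crp{F}N$ because $\crp{K}$ already acts via $\crp{K} \subseteq \crp{F}(\theta) = Z(\crp{F}N e_{\theta,\crp{F}})$, which sits inside the image of $\crp{F}N$. Hence $U$ remains simple as a $\crp{K}N$\nbd module and lies in the block $e_{\theta,\crp{K}}$, so $V e_{\theta,\crp{K}} = V$, the other primitive idempotents into which $e_{\theta,\crp{F}}$ splits over $\crp{K}N$ annihilating $V$. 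The same observation shows that every $\crp{F}N$\nbd endomorphism of $V$ is automatically $\crp{K}$\nbd linear, so $\enmo_{\crp{K}N}(V) = \enmo_{\crp{F}N}(V) = S$, with the same $G$\nbd action by $\oG$\nbd conjugation. Therefore $S$ also represents $\brcls{\theta}{\kappa}{\crp{K}}$, yielding the desired equality. The main obstacle is the compatibility check for the $\crp{K}$\nbd action with $\oG$, which rests entirely on the hypothesis $\crp{K} \subseteq \crp{F}(\theta)^G$ via the identity $\omega_\theta(z^g) = \omega_\theta(z)^{\alpha_g}$; once that is in place the rest is standard Clifford\nbd theoretic bookkeeping.
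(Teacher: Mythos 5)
Your proposal is correct and follows essentially the same route as the paper: both proofs take a $\theta$\nbd quasihomogeneous $\crp{F}\oG$\nbd module $V$, use the hypothesis $\crp{K}\subseteq\crp{F}(\theta)^G$ to embed $\crp{K}$ into $\Z(\crp{F}Ne_{\theta,\crp{F}})^G$ and thereby view $V$ as a $\crp{K}\oG$\nbd module, and conclude from $\enmo_{\crp{K}N}(V)=\enmo_{\crp{F}N}(V)$ that one algebra represents both classes. You simply spell out the details (the $G$\nbd invariance of the lifted scalars, quasihomogeneity over $\crp{K}$) that the paper leaves implicit.
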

\begin{proof}
  Note that the condition on $\crp{K}$ implies that
  $\crp{F}(\theta) = \crp{K}(\theta)$, and thus both
  $\brcls{\theta}{\kappa}{\crp{F}}$ and 
  $\brcls{\theta}{\kappa}{\crp{K}}$ are elements of the same
  Brauer-Clifford group $\BrCliff(G,\crp{F}(\theta))$.
  
  Let $V$ be a $\theta$-quasihomogeneous 
  module over $\crp{F}\oG$, 
  where, as usual, $\oG$ is the domain of $\kappa$
  and $N=\Ker\kappa$.  
  Since $\crp{K}$ is isomorphic to a subfield of
  $\Z(\crp{F}Ne_{\theta,\crp{F}})^G$,
  we can view $V$ as a $\crp{K}\oG$\nbd module.
  Then $\enmo_{\crp{K}N}(V)=\enmo_{\crp{F}N}(V)$,
  and the result follows.
\end{proof}
The next result contains Proposition~\ref{p:fieldindep} in the case 
where $Z = \crp{E}$ is a field.
\begin{prop}\label{p:fieldindep_fields}
  Let $G$ be a group which acts on the field $\crp{E}$, 
  and let 
  \[ \crp{F}\subseteq \crp{K}\subseteq\crp{E}^G
             \subseteq \crp{E} \subseteq \crp{F}(\eps)
  \]
  be a chain of fields, where 
  $\eps$ is a root of unity.
  Then 
  \[\SC^{(\crp{F})}(G,\crp{E}) = 
   \SC^{(\crp{K})}(G,\crp{E}).
  \]
\end{prop}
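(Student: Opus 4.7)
The plan is to establish both inclusions separately. The direction $\SC^{(\crp{F})}(G,\crp{E}) \subseteq \SC^{(\crp{K})}(G,\crp{E})$ follows directly from Lemma~\ref{l:brclsfielddep}: given $a = \overline{\alpha}\brcls{\theta}{\kappa}{\crp{F}}$ with $\alpha\colon \Z(\theta,\kappa,\crp{F}) \iso \crp{E}$ as $G$-algebras, composing $\alpha$ with $\omega_\theta$ identifies $\crp{F}(\theta)$ with $\crp{E}$ as $G$-algebras, so as subfields of $\compl$ one has $\crp{F}(\theta) = \crp{E}$ and $\crp{K} \subseteq \crp{E}^G = \crp{F}(\theta)^G$. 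Lemma~\ref{l:brclsfielddep} then gives $\brcls{\theta}{\kappa}{\crp{F}} = \brcls{\theta}{\kappa}{\crp{K}}$ in $\BrCliff(G,\crp{E})$, exhibiting $a$ as an element of $\SC^{(\crp{K})}(G,\crp{E})$.

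The reverse inclusion is the substantive content of the proposition, since for a class $a = \overline{\alpha}\brcls{\theta}{\kappa}{\crp{K}}$ in $\SC^{(\crp{K})}(G,\crp{E})$, the field $\crp{F}(\theta)$ may be a proper subfield of $\crp{E}$. My plan is to replace $(\theta,\kappa)$ by a pullback product with an auxiliary pair that generates $\crp{E}$ over $\crp{F}$ but has trivial Brauer-Clifford class. Using the hypothesis $\crp{E} \subseteq \crp{F}(\eps)$, Lemma~\ref{l:scid} and its corollary furnish a semi-invariant Clifford pair $(\chi,\mu)$ over $\crp{F}$ with $\crp{F}(\chi) = \crp{E}$, $\brcls{\chi}{\mu}{\crp{F}} = \algcls{\crp{E}}$ (the identity of $\BrCliff(G,\crp{E})$), and $\Z(\chi,\mu,\crp{F}) \iso \crp{E}$ as $G$-algebras matching the prescribed $G$-action on $\crp{E}$.

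Now form the pullback product pair $(\theta \times \chi, \kappa \times_G \mu)$ of Lemma~\ref{l:scprod}. I claim it is semi-invariant over $\crp{F}$ with $\Z(\theta \times \chi, \kappa \times_G \mu, \crp{F}) \iso \crp{E}$ as $G$-algebras. The equality $\crp{F}(\theta \times \chi) = \crp{E}$ is immediate from $\crp{F}(\chi) = \crp{E}$. For semi-invariance, for each $g \in G$ let $\beta_g \in \Gal(\crp{E}/\crp{F})$ be the element realizing the prescribed action of $g$ on $\crp{E}$; because that action fixes $\crp{K}$ pointwise, $\beta_g$ lies in $\Gal(\crp{E}/\crp{K})$. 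Semi-invariance of $\theta$ over $\crp{K}$ then forces $\theta^{g\beta_g} = \theta$, while the construction of $(\chi,\mu)$ forces $\chi^{g\beta_g} = \chi$; hence $(\theta \times \chi)^{g\beta_g} = \theta \times \chi$ as required.

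To identify the Brauer-Clifford class of this product pair with $a$, first apply Lemma~\ref{l:brclsfielddep} (applicable because $\crp{K} \subseteq \crp{E}^G = \crp{F}(\theta \times \chi)^G$) to obtain $\brcls{\theta\times\chi}{\kappa\times_G\mu}{\crp{F}} = \brcls{\theta\times\chi}{\kappa\times_G\mu}{\crp{K}}$. Then apply Lemma~\ref{l:scprod} over $\crp{K}$ --- whose hypothesis is met since $\Z(\chi,\mu,\crp{K}) \iso \crp{E}$ as $G$-algebras (by Lemma~\ref{l:brclsfielddep} applied to $(\chi,\mu)$) --- to rewrite this as $\brcls{\theta}{\kappa}{\crp{K}} \cdot \brcls{\chi}{\mu}{\crp{K}} = a \cdot \algcls{\crp{E}} = a$. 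Hence $a \in \SC^{(\crp{F})}(G,\crp{E})$. The principal technical obstacle I expect is the semi-invariance check in the preceding paragraph: guaranteeing that a single Galois element $\beta_g$ simultaneously realizes semi-invariance for both $\theta$ and $\chi$ is exactly what forces us to construct $(\chi,\mu)$ so that the induced $G$-action on its center algebra coincides with the prescribed one.
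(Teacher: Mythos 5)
Your proof is correct and follows essentially the same route as the paper: the easy inclusion via Lemma~\ref{l:brclsfielddep}, and for the converse the auxiliary pair from Lemma~\ref{l:scid} with trivial class, combined with the pullback product of Lemma~\ref{l:scprod} and a second application of Lemma~\ref{l:brclsfielddep}. Your extra semi-invariance verification is a detail the paper leaves implicit, not a different method.
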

\begin{proof}
  The inclusion
   $\SC^{(\crp{F})}(G,\crp{E}) \subseteq 
     \SC^{(\crp{K})}(G,\crp{E})$
  is a direct consequence of 
  Lemma~\ref{l:brclsfielddep}:
  If $\brcls{\theta}{\kappa}{\crp{F}} \in \SC^{(\crp{F})}(G,\crp{E})$,
  then $\crp{E} = \crp{F}(\theta)$, and thus 
  $ \crp{K} \subseteq \crp{F}(\theta)^G $, 
  so Lemma~\ref{l:brclsfielddep} applies.
  
  Conversely, pick 
  $\brcls{\theta}{\kappa}{\crp{K}}
   \in \SC^{(\crp{K})}(G,\crp{E})$.
  Then $\crp{E}=\crp{K}(\theta)$, 
  but it may happen that $\crp{F}(\theta) < \crp{E}$
  (equivalently, $\crp{K}\not\subseteq \crp{F}(\theta)$).
  However, by Lemma~\ref{l:scid} there is a 
  Clifford pair $(\phi,\pi)$ such that
  \[ [\crp{E}]= \brcls{\phi}{\pi}{\crp{F}} 
              =\brcls{\phi}{\pi}{\crp{K}},
  \]
  where the second equality follows from Lemma~\ref{l:brclsfielddep}.
  We have $\crp{E}=\crp{F}(\phi) = \crp{F}(\theta \times \phi)$,
  and in particular 
  $\crp{K}\subseteq \crp{F}(\theta \times \phi)^G$.  
  Applying first Lemma~\ref{l:scprod}, then 
  Lemma~\ref{l:brclsfielddep} again, we get
  \begin{align*}
   \brcls{\theta}{\kappa}{\crp{K}}
      &= \brcls{\theta\times \phi}{\kappa \times_G \pi}{\crp{K}}
      \\
      &= \brcls{\theta\times \phi}{\kappa \times_G \pi}{\crp{F}}
     \in \SC^{(\crp{F})}(G,\crp{E}).\qedhere
  \end{align*}
\end{proof}
From the proof, we get a statement about restricted Schur-Clifford groups.
\begin{cor}
  In the situation of
  Proposition~\ref{p:fieldindep_fields},
  let $\mathcal{X}$ be a class of Clifford pairs.
  Then 
  \[\SC_{\mathcal{X}}^{(\crp{F})}(G,\crp{E}) \subseteq 
       \SC_{\mathcal{X}}^{(\crp{K})}(G,\crp{E}).
  \]
  If there is a Clifford pair $(\phi,\pi)$ such that
  $[\crp{E}]= \brcls{\phi}{\pi}{\crp{F}}$
  and such that
  $(\theta\times \phi, \kappa\times_G\phi)\in \mathcal{X}$
  for all $(\theta,\kappa)\in \mathcal{X}$,
  then 
  \[\SC_{\mathcal{X}}^{(\crp{F})}(G,\crp{E}) = 
         \SC_{\mathcal{X}}^{(\crp{K})}(G,\crp{E}).
  \]
\end{cor}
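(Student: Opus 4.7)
The plan is to transplant the argument of Proposition~\ref{p:fieldindep_fields} to the restricted setting, tracking at each step the membership in the class $\mathcal{X}$. Both inclusions come out of exactly the same manipulations with Lemmas~\ref{l:brclsfielddep} and~\ref{l:scprod} already used in that proof; the only new point is that the pair which finally witnesses membership needs to lie in $\mathcal{X}$, and this is precisely what the hypothesis on $(\phi,\pi)$ provides.

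For the inclusion $\SC_{\mathcal{X}}^{(\crp{F})}(G,\crp{E})\subseteq \SC_{\mathcal{X}}^{(\crp{K})}(G,\crp{E})$, I would simply take $\brcls{\theta}{\kappa}{\crp{F}}\in \SC_{\mathcal{X}}^{(\crp{F})}(G,\crp{E})$ with $(\theta,\kappa)\in\mathcal{X}$. Then $\crp{F}(\theta)=\crp{E}$, so $\crp{F}\subseteq\crp{K}\subseteq\crp{F}(\theta)^G$, and Lemma~\ref{l:brclsfielddep} gives
\[
  \brcls{\theta}{\kappa}{\crp{F}}=\brcls{\theta}{\kappa}{\crp{K}}.
\]
The same pair $(\theta,\kappa)\in\mathcal{X}$ then witnesses the class on the right-hand side. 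Here no additional hypothesis on $\mathcal{X}$ is needed.

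For the reverse inclusion under the additional hypothesis, I would start with $\brcls{\theta}{\kappa}{\crp{K}}\in \SC_{\mathcal{X}}^{(\crp{K})}(G,\crp{E})$, $(\theta,\kappa)\in\mathcal{X}$. The obstruction, exactly as in Proposition~\ref{p:fieldindep_fields}, is that one can only guarantee $\crp{F}(\theta)\subseteq\crp{E}$, not equality, so $\brcls{\theta}{\kappa}{\crp{F}}$ need not even live in $\BrCliff(G,\crp{E})$. The given pair $(\phi,\pi)$ repairs this: since $\crp{F}(\phi)=\crp{E}$, also $\crp{F}(\theta\times\phi)=\crp{F}(\theta,\phi)=\crp{E}$, so Lemma~\ref{l:brclsfielddep} applies to the product pair and gives
\[
  \brcls{\theta\times\phi}{\kappa\times_G\pi}{\crp{F}}
    =\brcls{\theta\times\phi}{\kappa\times_G\pi}{\crp{K}}.
\]
By Lemma~\ref{l:scprod} the right-hand side factors as $\brcls{\theta}{\kappa}{\crp{K}}\cdot\brcls{\phi}{\pi}{\crp{K}}$, and a second use of Lemma~\ref{l:brclsfielddep} on $(\phi,\pi)$ yields $\brcls{\phi}{\pi}{\crp{K}}=\brcls{\phi}{\pi}{\crp{F}}=[\crp{E}]$, which is the identity of $\BrCliff(G,\crp{E})$. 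Combining,
\[
  \brcls{\theta}{\kappa}{\crp{K}}
    =\brcls{\theta\times\phi}{\kappa\times_G\pi}{\crp{F}},
\]
and the hypothesis $(\theta\times\phi,\kappa\times_G\pi)\in\mathcal{X}$ places this in $\SC_{\mathcal{X}}^{(\crp{F})}(G,\crp{E})$.

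There is no genuinely hard step here; the proof is a bookkeeping refinement of Proposition~\ref{p:fieldindep_fields}. The only point requiring attention is that each application of Lemma~\ref{l:brclsfielddep} has its hypothesis $\crp{F}\subseteq\crp{K}\subseteq\crp{F}(\chi)^G$ in place for the relevant character $\chi$; this reduces to the standing chain $\crp{F}\subseteq\crp{K}\subseteq\crp{E}^G$ together with the identification of $\crp{E}$ with $\crp{F}(\theta)$, $\crp{F}(\phi)$, or $\crp{F}(\theta\times\phi)$, as the case may be. The role of the stability hypothesis on $\mathcal{X}$ is exactly to ensure that the pair used in the final step is in $\mathcal{X}$.
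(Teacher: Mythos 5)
Your proposal is correct and follows essentially the same route as the paper, which obtains this corollary by re-running the proof of Proposition~\ref{p:fieldindep_fields} while tracking membership in $\mathcal{X}$: the first inclusion is Lemma~\ref{l:brclsfielddep} applied to the same pair, and the reverse inclusion combines Lemma~\ref{l:scprod} with Lemma~\ref{l:brclsfielddep}, the hypothesis on $(\phi,\pi)$ serving exactly to keep the witnessing product pair inside $\mathcal{X}$. Your verification of the hypotheses of the lemmas (in particular $\crp{K}\subseteq\crp{F}(\theta\times\phi)^G$ and $\brcls{\phi}{\pi}{\crp{K}}=[\crp{E}]$) matches the paper's argument.
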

As an example consider the class $\mathcal{A}$ of
Clifford pairs $(\theta,\kappa)$ such that 
$N=\Ker \kappa$ is abelian.
If there is a root of unity $\zeta$ such that
$\crp{E}=\crp{F}(\zeta)$,
then 
$\SC_{\mathcal{A}}^{(\crp{F})}(G,\crp{E})
 = 
 \SC_{\mathcal{A}}^{(\crp{K})}(G,\crp{E})$.
On the other hand, it may happen that
$\crp{E}=\crp{K}(\zeta)$ for some root of unity
$\zeta$, but
$\crp{E}$ over $\crp{F}$ is not cyclotomic.
(Example: $\crp{F}=\rats$, $\crp{K}=\rats(\sqrt{2})$
 and $\crp{E}= \rats(\sqrt{2}, \zeta)$
 where $\zeta$ is a primitive third root of unity.)
Then 
$\SC_{\mathcal{A}}^{(\crp{F})}(G,\crp{E})$
is empty, while
$\SC_{\mathcal{A}}^{(\crp{K})}(G,\crp{E})$
is a subgroup of $\BrCliff(G,\crp{E})$
(see the proof of Corollary~\ref{c:cyclickernelclass}).

\section{Restriction}
\label{sec:res}
Let $G$ and $H$ be finite groups and
$\eps\colon H\to G$ a group homomorphism.
Let $R$ be a $G$\nbd ring.
Then $R$ can be viewed as $H$-ring and
$\eps$ induces a group homomorphism
$\BrCliff(G,R)\to \BrCliff(H,R)$.
Let $Z$ be a commutative simple $G$\nbd algebra
over $\crp{F}$.
Recall that this means that $Z$ has no nontrivial 
$G$\nbd invariant ideal,  
and that such a $Z$ is a direct product of fields 
which are permuted transitively by $G$~\cite[Proposition~2.12]{turull11}.
Then $Z$ may not be simple as an $H$\nbd algebra
(with the notable exception of the case where 
 $Z$ is a field).
But if $e$ is a primitive idempotent in $Z^H=\C_Z(H)$,
then $Ze$ is simple as $H$\nbd algebra.
The map $Z\to Ze$ is a homomorphism of $H$\nbd algebras
and induces a homomorphism of Brauer-Clifford groups
$\BrCliff(H,Z)\to \BrCliff(H,Ze)$.
\begin{prop}\label{p:restriction}
  Assume the situation just described.
  Then the induced homomorphism
  $\BrCliff(G,Z)\to \BrCliff(H,Ze)$ maps
  $\SC(G,Z)$ into $\SC(H,Ze)$.  
\end{prop}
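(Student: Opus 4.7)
The plan is to take a Schur representative of a given class in $\SC(G,Z)$, modify the epimorphism $\kappa$ by pulling it back along $\eps$, and then cut both the representing module and the center algebra by the idempotent $e$ to produce a Schur $H$-algebra representing the image class in $\BrCliff(H,Ze)$.

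First I would fix an isomorphism $\alpha\colon\Z(\theta,\kappa,\crp{F})\to Z$ of $G$-algebras realising the given element $\overline{\alpha}\brcls{\theta}{\kappa}{\crp{F}}\in\SC(G,Z)$. Since the primitive idempotents of $\Z(\theta,\kappa,\crp{F})$ are precisely the $G$-conjugates of $e_{\theta,\crp{F}}$, every primitive idempotent of the fixed subring $\Z(\theta,\kappa,\crp{F})^H$ has the form $(e_{\theta^{\hat g},\crp{F}})^H$ for some $\hat g\in\oG$. Replacing $\theta$ by its conjugate $\theta^{\hat g}$---which alters neither the Brauer-Clifford class nor the center algebra---I may assume $\alpha^{-1}(e)=\tilde e:=(e_{\theta,\crp{F}})^H$. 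Next I would form the pullback $\oH=\oG\times_G H$ with projection $\pi\colon\oH\to H$; then $\pi$ is surjective with kernel isomorphic to $N$, so $(\theta,\pi)$ is a Clifford pair over $H$. A direct inspection of central idempotents then shows that $\Z(\theta,\pi,\crp{F})$ equals $\Z(\theta,\kappa,\crp{F})\tilde e$ as an $H$-algebra, so that $\alpha$ restricts to an $H$-algebra isomorphism $\alpha'\colon\Z(\theta,\pi,\crp{F})\to Ze$.

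To close the argument, I would pick a $\theta$-quasihomogeneous $\crp{F}\oG$-module $V$ with $S=\enmo_{\crp{F}N}(V)$ representing $\brcls{\theta}{\kappa}{\crp{F}}$ and observe that the $\oH$-submodule $V\tilde e$ is $\theta$-quasihomogeneous over $\crp{F}\oH$ and represents $\brcls{\theta}{\pi}{\crp{F}}$ via the endomorphism algebra $\enmo_{\crp{F}N}(V\tilde e)=S\tilde e$. Under $\alpha$ this algebra is identified with $Se$, which is by construction a representative of the image of $\overline{\alpha}\brcls{\theta}{\kappa}{\crp{F}}$ under the composite $\BrCliff(G,Z)\to\BrCliff(H,Z)\to\BrCliff(H,Ze)$; hence that image lies in $\SC(H,Ze)$. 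The main obstacle I anticipate is the preliminary reduction to $\alpha^{-1}(e)=(e_{\theta,\crp{F}})^H$ together with the bookkeeping that identifies the center algebra of the pullback Clifford pair with $Ze$; once those are in place, the remaining verifications are essentially tautological.
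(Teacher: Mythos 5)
Your proposal is correct and follows essentially the same route as the paper: pull $\kappa$ back along $\eps$ to get the Clifford pair $(\theta,\pi)$ over $H$, note that $(e_{\theta,\crp{F}})^H$ cuts out a direct summand of the center algebra, and identify $\enmo_{\crp{F}N}(V\tilde e)$ with $Se$. The only (harmless) difference is that you normalize $\theta$ within its $G$-orbit so that the chosen idempotent is $(e_{\theta,\crp{F}})^H$, whereas the paper keeps $\theta$ fixed and simply records the image class as $\brcls{\theta^g}{\rho}{\crp{F}}$ for some $g\in G$.
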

\begin{proof}
  Let $(\theta,\kappa)$ be a Clifford pair
  over $G$.
  Consider
  \[ \begin{tikzcd}
       1 \rar & N \rar \dar[equals] 
              & \oH\rar{\rho} \dar{\pi} 
              & H \rar \dar{\eps}
              & 1 
              \\
       1 \rar & N \rar
              & \oG \rar{\kappa}
              & G \rar
              & 1 \rlap{,}
     \end{tikzcd}
  \]
  where $\oH$ is the pull-back
  of the morphisms $\kappa\colon \oG\to G$ and
  $\eps\colon H\to G$.
  Then $(\theta,\rho)$ is a Clifford pair over
  $H$. 
  (In the special case where $\eps\colon H\to G$ is an inclusion
   of a subgroup, 
   the group $\oH$ is simply $\kappa^{-1}(H)$
   and $\rho$ the restriction of $\kappa$
   to $\oH$.)
  The idempotent
  $(e_{\theta,\crp{F}})^H$ is a sum of $H$\nbd conjugates
  of $e_{\theta,\crp{F}}$ and thus
  $(e_{\theta,\crp{F}})^H(e_{\theta,\crp{F}})^G
    =(e_{\theta,\crp{F}})^H$.
  It follows that 
  $\Z(\theta,\rho,\crp{F})$
  is a direct summand of
  $\Z(\theta,\kappa,\crp{F})$.
  Conversely, any direct summand of
  $\Z(\theta,\kappa,\crp{F})$ as $H$\nbd algebra
  is of the form
  $\Z(\theta^g,\rho,\crp{F})$ 
  for some $g\in G$.
  
  Let $V$ be a $\theta$\nbd quasihomogeneous module
  over $\crp{F}\oG$
  such that $S=\enmo_{\crp{F}N}(V)$
  is a $G$\nbd algebra in 
  $\brcls{\theta}{\kappa}{\crp{F}}
   \in \SC(G,Z)$, and let $e\in Z^H$
   be a primitive idempotent.
  Then $Ve$ becomes a
  module over $\crp{F}\widehat{H}$ via
  $\pi\colon \widehat{H}\to \widehat{G}$.
  Thus $\enmo_{\crp{F}N}(Ve) \iso \enmo_{\crp{F}N}(V)e=Se$ is an 
  $H$\nbd algebra
  in $\brcls{\theta^g}{\rho}{\crp{F}}$
  for some $g\in G$.
  Since the map
  $\BrCliff(G,Z)\to \BrCliff(H,Ze)$
  sends the equivalence class of $S$ to the 
  equivalence class of the $H$\nbd algebra $Se$,
  this proves the result.  
\end{proof}

\section{The Schur-Clifford group over simple 
         \texorpdfstring{$G$}{G}-rings that are not a field}
\label{sec:notfield}
Let $Z$ be a simple $G$-ring.
Then $Z$ is isomorphic to a direct product of fields
which are permuted transitively 
by the group $G$~\cite[Proposition~2.12]{turull11}.
More precisely, let $e\in Z$ be a primitive idempotent.
Then $\crp{E}=Ze$ is a field.
Let $H=G_e=\{g\in G\mid e^g=e\}$ be the stabilizer of $e$.
Then $e^ge=0$ for all $g\in G\setminus H$.
Choose $T\subseteq G$ with 
$G=\bigdcup_{t\in T}Ht$.
Then
$ 1 = \sum_{t\in T} e^t$ is a primitive idempotent decomposition
of $1$ in $Z$.
Thus
\[ Z = \bigtimes_{t\in T} Ze^t 
     \iso \bigtimes_{t\in T} \crp{E}.
\]
The group $H$ acts on the field $\crp{E}=Ze$.
We recall the following 
result of Turull~\cite[Theorem~5.3]{turull09}:
\begin{prop}\label{p:cliffordri}
  With the notation just introduced, 
  the map sending a $G$\nbd algebra $S$ over $Z$ to 
  the $H$\nbd algebra $Se$ over $\crp{E}=Ze$
  defines an isomorphism
  \[ \BrCliff(G,Z)\iso \BrCliff(H, \crp{E}).
  \]  
\end{prop}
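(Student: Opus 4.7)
My plan is to verify that $[S] \mapsto [Se]$ is a well-defined group homomorphism and then to exhibit an explicit inverse via an induction construction from $H$-algebras over $\crp{E}$ to $G$-algebras over $Z$.

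For well-definedness, I would first check that $Se$ is Azumaya over $\crp{E}$ as an $H$-algebra whenever $S$ is Azumaya over $Z$ as a $G$-algebra: the decomposition $S = \bigoplus_{t \in T} Se^t$ expresses $S$ as a direct sum of two-sided ideals permuted by $G$ with $H$ stabilizing $Se$, and each summand is central simple over the field factor $Ze^t$ of $Z$; in particular $Se$ is central simple over $\crp{E} = Ze$. Triviality is preserved: if $S = \enmo_Z(P)$ for a $ZG$-module $P$ that is a progenerator over $Z$, then $Pe$ is an $\crp{E}H$-module which is a progenerator over $\crp{E}$, and $Se \iso \enmo_{\crp{E}}(Pe)$. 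Multiplicativity follows from the natural $H$-algebra isomorphism $(S_1 \tensor_Z S_2)e \iso (S_1 e) \tensor_{\crp{E}} (S_2 e)$, which holds because $Z$ acts on the right-hand tensor factor through its quotient $\crp{E}$ after multiplication by $e$.

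For the inverse, I would set up an induction construction. Fix a transversal $\{t_1 = 1, t_2, \dots, t_n\}$ of $H$ in $G$. Given an Azumaya $H$-algebra $T$ over $\crp{E}$, define
\[
 T^{\mathrm{ind}} := T_1 \times \dots \times T_n,
\]
where each $T_i$ is a copy of $T$. The $Z$-module structure is given by letting $Ze^{t_i}$ act on $T_i$ through the $t_i$-transported isomorphism $Ze^{t_i} \iso \crp{E}$. The $G$-action permutes factors according to the rule $(sg)_{\sigma(i)} = s_i h_i$, where $\sigma$ and $h_i \in H$ are determined by the Mackey decomposition $t_i g = h_i t_{\sigma(i)}$. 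A direct verification shows $T^{\mathrm{ind}}$ is an Azumaya $G$-algebra over $Z$ satisfying $T^{\mathrm{ind}} e \iso T$ as $H$-algebras.

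Finally, I would check that the two maps are mutually inverse on equivalence classes. One direction is immediate from the construction. For the other, I would exhibit an explicit $G$-algebra isomorphism $S \iso (Se)^{\mathrm{ind}}$: the right-multiplication maps $Se \xrightarrow{\cdot t_i} Se^{t_i}$ are $\crp{E}$-linear isomorphisms which assemble into an algebra isomorphism intertwining the $G$-actions by construction, using the orthogonality $Se^{t_i} \cdot Se^{t_j} = 0$ for $i \neq j$. The main technical obstacle is the bookkeeping in the induction construction---in particular, verifying that the $G$-action on $T^{\mathrm{ind}}$ is a well-defined ring action independent of the choice of transversal up to canonical isomorphism, and that induction is compatible with tensor products so that the inverse map is genuinely a group homomorphism.
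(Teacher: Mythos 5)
The paper gives no proof of this proposition at all---it is quoted from Turull's work \cite[Theorem~5.3]{turull09}---so there is nothing internal to compare against; your restriction-plus-induction argument is correct and is essentially the standard one, and your $T^{\mathrm{ind}}$ is precisely the induced algebra $A\tensor_{Z[H]}Z[G]$ (product of $t$\nbd conjugate copies of $A=Se$ indexed by the transversal, with $G$ permuting the factors through the cocycle $t_ig=h(t_i,g)\,(t_i\circ g)$) that the paper itself invokes when it reconstructs $S$ from $Se$ in the proof of Theorem~\ref{t:sccliffcor}. One small correction of wording: the maps $Se\to Se^{t_i}$ you use to identify $S$ with $(Se)^{\mathrm{ind}}$ are not right multiplications by $t_i$ (no such element lives in $S$) but the $t_i$\nbd semilinear maps $s\mapsto s^{t_i}$ coming from the $G$\nbd action, and it is exactly the identity $t_ig=h(t_i,g)\,(t_i\circ g)$ that makes them intertwine the two $G$\nbd actions.
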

The following is Theorem~\ref{t:reductofields} from the introduction:
\begin{thm}\label{t:sccliffcor}
  The isomorphism of Proposition~\ref{p:cliffordri}
  restricts to an isomorphism
  \[ \SC(G,Z)\iso \SC(H,\crp{E}).
  \]  
\end{thm}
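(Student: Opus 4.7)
The forward inclusion $\SC(G,Z)\to \SC(H,\crp{E})$ under the isomorphism of Proposition~\ref{p:cliffordri} should be immediate from Proposition~\ref{p:restriction}, applied to the inclusion $H\hookrightarrow G$ and the primitive idempotent $e\in Z^H$: a Clifford pair $(\theta,\kappa)$ over $G$ representing a class in $\SC(G,Z)$ pulls back to $(\theta,\kappa|_{\kappa^{-1}(H)})$ over $H$, which gives a class in $\SC(H,\crp{E})$. All the work lies in the reverse inclusion, that is, in producing, for any $[S]\in \SC(H,\crp{E})$, a preimage in $\SC(G,Z)$.

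Starting from a Clifford pair $(\theta,\rho\colon \oH\to H)$ with $N=\Ker\rho$ and $\Z(\theta,\rho,\crp{F})\iso \crp{E}$, the plan is to use a wreath-product construction. Fix a right transversal $T=\{t_1,\ldots,t_n\}$ for $H$ in $G$ with $t_1=1$, write $tg=h_t(g)(t\cdot g)$ for the associated coset cocycle, and define
\[
  \oG = \{((\tilde h_t)_{t\in T},\,g)\in \oH^T \times G : \rho(\tilde h_t)=h_t(g) \text{ for all } t\},
\]
with multiplication $((\tilde h_t),g)((\tilde h'_t),g')=((\tilde h_t\tilde h'_{t\cdot g}),\,gg')$. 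Equivalently, $\oG$ is the pullback of the coset embedding $G\hookrightarrow H\wr \sym{T}$ along $\oH\wr\sym{T}\to H\wr \sym{T}$. The second projection $\tilde\kappa\colon \oG\to G$ is surjective with kernel $\tilde N=N^T$. I will take $\tilde\theta\in \Irr(\tilde N)$ defined by $\tilde\theta((n_t)_t)=\theta(n_{t_1})$, that is, the external tensor product with $\theta$ at position $t_1$ and the trivial character elsewhere, and propose $(\tilde\theta,\tilde\kappa)$ as the Clifford pair representing the preimage.

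The first thing to check is that $\Z(\tilde\theta,\tilde\kappa,\crp{F})\iso Z$ as $G$-algebras, so that $\brcls{\tilde\theta}{\tilde\kappa}{\crp{F}}\in \SC(G,Z)$. The $\crp{F}$-central primitive idempotent of $\tilde\theta$ factors as $e_{\tilde\theta,\crp{F}}=e_{\theta,\crp{F}}\tensor e_1\tensor\dotsm\tensor e_1$, where $e_1$ is the idempotent for the trivial character. Since $e_{\theta,\crp{F}}$ is $\oH$-invariant by semi-invariance of $\theta$, while the $\oG$-action on $\crp{F}\tilde N$ combines the permutation $G\to \sym{T}$ of coordinates with $\oH$-twists on each, the $G$-orbit of $e_{\tilde\theta,\crp{F}}$ is in bijection with $T\iso G/H$. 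A direct computation then identifies $\Z(\tilde\theta,\tilde\kappa,\crp{F}) \iso \bigoplus_{t\in T}\crp{E}$ with $Z \iso \bigoplus_{t\in T} Ze^t$ as $G$-algebras. What remains is to identify the image of $\brcls{\tilde\theta}{\tilde\kappa}{\crp{F}}$ in $\BrCliff(H,\crp{E})$: by the forward direction it equals $\brcls{\tilde\theta}{\tilde\rho}{\crp{F}}$, where $\tilde\rho$ is the restriction of $\tilde\kappa$ to $\oG_H:=\tilde\kappa^{-1}(H)$. Here the key observation is that $((\tilde h_t),h)\mapsto \tilde h_{t_1}$ is a surjective homomorphism $\pi\colon \oG_H\to \oH$ with $\rho\circ\pi=\tilde\rho$, whose kernel lies inside $\tilde N\cap\Ker\tilde\theta$ (because $\tilde\theta$ only reads the $t_1$-coordinate). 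The inflation principle used in the proof of Corollary~\ref{c:cyclickernelclass} then yields $\brcls{\tilde\theta}{\tilde\rho}{\crp{F}}=\brcls{\theta}{\rho}{\crp{F}}$, finishing the proof.

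The key conceptual point, and the main obstacle to getting started, is spotting the right construction. The naive diagonal choice $\tilde\theta=\theta\tensor\dotsm\tensor\theta$ would be $G$-semi-invariant and give a center algebra that is a single field, not $Z$; concentrating $\theta$ at a single transversal point $t_1$ is what produces a $G$-orbit of size $[G:H]$ with stabilizer $H$, matching the structure of $Z$. Once this setup is in place, the pullback formalism for $\oG$, the orbit computation for the center algebra, and the identification via inflation are all routine.
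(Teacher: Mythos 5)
Your construction is the same one the paper uses: the pullback $\oG$ of $G\to H\wr\sym{T}$ along $\oH\wr\sym{T}\to H\wr\sym{T}$ (the paper's $\kappa^{\tensor G}$ from Lemma~\ref{l:exseti}), with the character $\theta\times 1_M\times\dotsm\times 1_M$ concentrated at one transversal point. Where you diverge is in identifying the image: the paper builds an explicit $\crp{F}\oG$\nbd module $V=\bigoplus_{t\in T}W_t$ out of $t$\nbd conjugates of $W$, lets the simple $G$\nbd algebra $S$ over $Z$ with $Se\iso A$ act on it, and proves $S\iso\enmo_{\crp{F}N}(V)$ by a direct computation with the components $\beta_{u,t}$ of an endomorphism; you instead verify $\Z(\tilde\theta,\tilde\kappa,\crp{F})\iso Z$ and then compute the image of the new class formally, via Proposition~\ref{p:restriction} followed by inflation along $\pi\colon\tilde\kappa^{-1}(H)\to\oH$, $((\tilde h_t)_t,h)\mapsto\tilde h_{t_1}$. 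That inflation step is sound ($\pi$ is a surjective homomorphism with $\rho\circ\pi=\tilde\rho$ and $\Ker\pi\subseteq\Ker\tilde\theta\cap \tilde N$, and $\tilde\theta$ is the inflation of $\theta$), and for $\theta\neq 1_M$ your route gives a legitimate, arguably slicker, alternative to the paper's module computation, at the price of carrying out the center-algebra identification and being careful that the isomorphisms $\Z(\tilde\theta,\tilde\kappa,\crp{F})\to Z$ and $Ze\iso\crp{E}$ can be chosen so that the prescribed element of $\SC(H,\crp{E})$ (not merely some Galois twist of it) is hit.

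There is, however, a genuine gap: the case $\theta=1_M$. Your key claim that concentrating $\theta$ at $t_1$ produces a $G$\nbd orbit of idempotents of size $[G:H]$ with stabilizer exactly $H$ fails there, because $e_{\tilde\theta,\crp{F}}=e_{1}\tensor\dotsm\tensor e_{1}$ is then $G$\nbd invariant, so $\Z(\tilde\theta,\tilde\kappa,\crp{F})\iso\crp{F}$ is a single field and not $Z\iso\prod_{t\in T}\crp{E}$ (when $[G:H]>1$). This is not an artifact of your method alone: the paper's surjectivity argument for $S\to\enmo_{\crp{F}N}(V)$ also needs $\theta\neq 1_M$ (it uses that no nonzero vector of the simple module is fixed by $M$), and the paper devotes a separate construction to the trivial case, showing $Z=\bigtimes_{t\in T}\crp{E}$ itself is a Schur $G$\nbd algebra via $\oG=M\wr G$ acting on $U^T$ for a nontrivial absolutely simple $\crp{E}M$\nbd module $U$. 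Your plan as written silently assumes the orbit computation always works, so you must either add such a separate argument, or observe that the only class arising from a trivial $\theta$ is the identity class $[\crp{E}]$, which by Lemma~\ref{l:scid} can be re-represented by a Clifford pair with nontrivial character (e.g.\ the faithful linear character of $\erz{-1}$ when $\crp{E}=\crp{F}$), after which your construction applies.
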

(As mentioned earlier, this theorem also completes the proofs of
Theorem~\ref{t:main} and Proposition~\ref{p:fieldindep} 
in the general case.)

It follows from Proposition~\ref{p:restriction}
that $\SC(G,Z)$ is mapped into
$\SC(H,\crp{E})$.
The nontrivial part is to show that
the restriction
$\SC(G,Z)\to \SC(H,\crp{E})$
is onto.
To do that, we need some facts about wreath products
and coset action which we recall now.
Choose a right transversal $T$ of $H$ in $G$,
so that $G=\bigdcup_{t\in T}Ht$. 
    The action of $G$ on the right cosets $Hg$ defines an action
    of $G$ on $T$, which we denote by $t\circ g$.
    Thus for $t\in T$ and $g\in G$,
    the element $t\circ g$ is the unique element in
    $ Htg\cap T$.
    We may write
    $tg= h(t,g) (t\circ g) $ with $h(t,g)\in H$.    
    The map $t\mapsto t\circ g$ is
    a permutation $\sigma=\sigma(g)\in \sym{T}$, 
    where $\sym{T}$ denotes the group of permutations of $T$.
    
    The symmetric group $\sym{T}$ acts on
    $H^T$, the set of maps $T\to H$, by
    $\big((h_t)_{t\in T}\big)^{\sigma}
     = (h_{t\sigma^{-1}})_{t\in T}$.
    The semidirect product of $\sym{T}$ and 
    $H^T$ is also known as the wreath product of $\sym{T}$ and
    $H$ and denoted by $H \wr\sym{T}$.  
    For every $g\in G$, set
    \[ g\phi = (h(t,g))_{t\in T} \sigma(g) 
             = \sigma(g) (h(t\circ g^{-1}, g))_{t\in T}
             \in H\wr \sym{T}.
    \]              
    It turns out that the map
    $\phi\colon G\to H\wr \sym{T}$
    is a group homomorphism~\cite[Lemma~13.3]{CRMRT1}.
    For later reference, we summarize:
\begin{lemma}\label{l:wreath_hom}
      Let $G$ be a group, $H\leq G$ and
      suppose $G=\bigdcup_{t\in T}Ht$.
      Then there is a group homomorphism
      $\phi\colon G \to H\wr \sym{T}$ such that
      \[\sigma (h_t)_{t\in T}= g\phi
            \iff 
            Ht\sigma = Htg
            \text{ and }
            tg= h_{t\sigma}t\sigma
            \text{ for all } t\in T.
          \]
\end{lemma}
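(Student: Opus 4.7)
The plan is to construct $\phi$ explicitly from the coset action and the factorization $tg = h(t,g)(t\circ g)$, read the claimed ``iff'' directly off the definition, and verify multiplicativity by a short cocycle computation. This is essentially the classical imprimitive wreath\nbd product embedding, and the argument is routine once the conventions are fixed.

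For each $g\in G$ and $t\in T$, let $t\circ g\in T$ be the unique representative with $Htg = H(t\circ g)$, and let $h(t,g)\in H$ be defined by $tg = h(t,g)(t\circ g)$. Uniqueness of both follows because $T$ is a transversal. The map $\sigma(g)\colon t\mapsto t\circ g$ lies in $\sym{T}$, with inverse $\sigma(g^{-1})$. I would then define
\[
   g\phi := \sigma(g)\,\bigl(h(t\circ g^{-1},\, g)\bigr)_{t\in T}
   \in \sym{T}\ltimes H^T.
\]
With $s := t\circ g$ and $\sigma := \sigma(g)$, so that $t = s\circ g^{-1} = s\sigma^{-1}$, the $s$\nbd entry of the displayed tuple is $h_s = h(t,g)$, whence $tg = h_s\,(t\sigma)$ and $Ht\sigma = Htg$. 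Conversely, the two conditions $Ht\sigma = Htg$ and $tg = h_{t\sigma}(t\sigma)$ force $\sigma$ and the tuple $(h_s)$ uniquely, by uniqueness of the factorization. This establishes the ``iff''.

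For multiplicativity, suppose $g_i\phi = \sigma_i(h^{(i)}_t)_{t\in T}$ for $i=1,2$. Applying the characterization to $g_1$ and then to $g_2$ gives
\[
  t(g_1g_2) = h^{(1)}_{t\sigma_1}(t\sigma_1)g_2
            = h^{(1)}_{t\sigma_1}\,h^{(2)}_{t\sigma_1\sigma_2}\,(t\sigma_1\sigma_2).
\]
Setting $s = t\sigma_1\sigma_2$, this reads $t(g_1g_2) = l_s\cdot s$ with $l_s = h^{(1)}_{s\sigma_2^{-1}}h^{(2)}_s$, and $Hs = Htg_1g_2$. Hence $(g_1g_2)\phi = \sigma_1\sigma_2\,(l_s)_{s\in T}$. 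This is exactly the product $\sigma_1(h^{(1)}_t)\cdot\sigma_2(h^{(2)}_t)$ in the semidirect product under the action $((h_t))^\sigma = (h_{t\sigma^{-1}})$ recorded just above the lemma, so $\phi$ is a homomorphism.

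The only genuine pitfall is bookkeeping: left\nbd versus right\nbd coset action, indexing of the tuples under the $\sym{T}$\nbd action, and whether wreath\nbd product elements are written permutation\nbd first or tuple\nbd first. None of this is deep; once the paper's convention $g\phi = \sigma(g)(h(t\circ g^{-1},g))_t$ is adopted, the computations close up at once and no auxiliary result beyond the definitions is needed.
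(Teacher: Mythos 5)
Your proposal is correct and follows exactly the paper's route: the same factorization $tg=h(t,g)(t\circ g)$, the same definition $g\phi=\sigma(g)\,(h(t\circ g^{-1},g))_{t\in T}$, and the characterization read off from uniqueness of the factorization. The only difference is that the paper delegates the multiplicativity of $\phi$ to a citation (Curtis--Reiner, Lemma~13.3), whereas you verify it directly; your cocycle computation is consistent with the paper's convention $\bigl((h_t)_{t\in T}\bigr)^{\sigma}=(h_{t\sigma^{-1}})_{t\in T}$ and is correct.
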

    One may identify $T$ with $\Omega=\{Hg\mid g\in G\}$
    and $\sym{T}$ with $\sym{\Omega}$,
    but even then $\phi$ does depend on the choice
    of the right transversal $T$.

    Suppose that
    $\kappa\colon \oH\to H$ 
    is a surjective homomorphism with kernel $M$.
    We want to construct a group $\oG$ and a surjective homomorphism
    $\oG\to G$, where $H\leq G$ as before.
    The homomorphism $\kappa$ yields, in an obvious way, 
    a surjective homomorphism
    $\oH\wr\sym{T}\to H\wr\sym{T}$ with
    kernel $M^T$.
    We write $\kappa\wr 1$ for this homomorphism.
    We define $\oG$ to be the pullback of
    \[ \begin{tikzcd}
           {} &G \dar{\phi}\\
           \oH\wr\sym{T} \rar{\kappa\wr 1} &H\wr \sym{T}\rlap{,}
       \end{tikzcd}
    \]
    that is, we set
    \[ \oG= \{(\sigma(h_t)_{t\in T}, g)\in (\oH\wr\sym{T})\times G
              \mid
              g\phi = \sigma (h_t \kappa)_{t\in T}
            \}.
    \]
    Since $\kappa\wr 1$ is surjective,
    the canonical homomorphism
    $\oG\to G$ is surjective and has kernel
    canonically isomorphic with 
    $ M^T$.
    Thus:
\begin{lemma}\label{l:exseti}
      Let 
      \[ \begin{tikzcd}
           1 \rar & M \rar  & \oH \rar{\kappa} & H  \rar &1
         \end{tikzcd}
      \]
      be an exact sequence of groups and suppose
      $H\leq G$.
      Let $T$ and $\phi\colon G\to H\wr \sym{T}$
      be as in Lemma~\ref{l:wreath_hom}.
      Then there exists a group
      $\oG$ and a homomorphism
      $\kappa^{\tensor G}\colon \oG \to G$      
      such that the following diagram is commutative 
      and has exact rows:
    \[\begin{tikzcd}
        1 \rar & M^T \dar[xshift=-0.3em,equals] \rar 
               & \oG \dar \rar{\kappa^{\tensor G}} 
               & G \dar{\phi} \rar
               & 1 \\
        1 \rar & M^T \rar 
               & \oH \wr \sym{T} \rar{\kappa\wr 1}
               & H \wr \sym{T} \rar
               & 1.             
      \end{tikzcd}
    \]    
\end{lemma}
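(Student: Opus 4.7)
The plan is to use directly the explicit pullback construction described in the paragraph immediately preceding the lemma statement. That is, I would set
\[ \oG = \{(y, g) \in (\oH \wr \sym{T}) \times G \mid (y)(\kappa \wr 1) = g\phi\}, \]
define $\kappa^{\tensor G}$ to be the projection onto the second coordinate, and take the middle vertical map $\oG \to \oH \wr \sym{T}$ in the diagram to be the projection onto the first coordinate. With these choices, commutativity of the right-hand square is immediate from the defining condition on $\oG$.

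The first real step is to verify that $\kappa \wr 1$ is surjective. Given $\sigma (h_t)_{t \in T} \in H \wr \sym{T}$, surjectivity of $\kappa$ lets me pick $\widehat{h}_t \in \oH$ with $\widehat{h}_t \kappa = h_t$, and then $\sigma (\widehat{h}_t)_{t\in T}$ is a preimage. The kernel of $\kappa \wr 1$ is visibly $M^T$, identified with the subgroup of $\oH \wr \sym{T}$ consisting of elements of the form $1\cdot (m_t)_{t\in T}$ with $m_t \in M$. This shows exactness of the bottom row.

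Next, I would establish surjectivity of $\kappa^{\tensor G}$. For any $g \in G$, the element $g\phi \in H\wr\sym{T}$ lies in the image of $\kappa \wr 1$ by the preceding step, so I can find $y \in \oH \wr \sym{T}$ with $(y)(\kappa \wr 1) = g\phi$, whence $(y,g) \in \oG$ maps to $g$. Then I would compute
\[ \ker \kappa^{\tensor G} = \{(y,1) \in \oG\} = \{(y,1) \mid y \in \ker(\kappa \wr 1)\} = M^T \times \{1\}, \]
and projection onto the first coordinate identifies this kernel with $M^T$ in the same way as in the bottom row, so the left-hand square also commutes and the top row is exact.

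I do not anticipate any serious obstacle: the whole statement is a routine application of the pullback construction in the category of groups, resting on the fact that a pullback of a surjection along an arbitrary homomorphism is again a surjection, together with the component-wise description of $\ker(\kappa \wr 1)$. The only notational care required is to track how elements $\sigma (h_t)_{t\in T}$ multiply in the wreath product, but since we work with the kernel only, that subtlety does not enter.
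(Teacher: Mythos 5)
Your proposal is correct and follows essentially the same route as the paper: the paper also defines $\oG$ as the pullback of $\phi$ along $\kappa\wr 1$, notes that $\kappa\wr 1$ is surjective with kernel $M^T$, and deduces that the projection $\oG\to G$ is surjective with kernel canonically identified with $M^T$. Your explicit verification of surjectivity of $\kappa\wr 1$ and of the kernel computation simply spells out the steps the paper calls obvious.
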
    
     The construction depends on the choice 
     of the transversal $T$.
     It can be shown that the isomorphism type
     of the extension in the first row is independent
     of $T$.
     Maybe this justifies the notation $\kappa^{\tensor G}$,
     but otherwise we will not need this.
   
  \hypertarget{defi:conjugatemodule}{%
  We need another piece of notation.}
  Let $Z$ be a $G$\nbd ring and $W$ a
  $Z$\nbd module, and fix some $g\in G$.  
  We call a $Z$\nbd module $X$ a 
  \defemph{$g$\nbd conjugate}
  \label{defi:conjugatemodule}
  of $W$, if there is an isomorphism of 
  abelian groups $\alpha\colon W\to X$ such that
  $(wz)\alpha= w\alpha z^g$ for all $w\in W$ and
  $z\in Z$. 
  The map $\alpha$ is called a $g$\nbd isomorphism.
  Such a conjugate always exists, for example
  we can take $X=W$ as abelian group and define a new multiplication
  of $Z$ on $W$ by $w\bullet z= w z^{g^{-1}}$.
  Or we could take $X= W\tensor g\subseteq W\tensor_{Z}Z[G]$,
  where $Z[G]$ denotes the skew group ring.
  It is not difficult to show that all
  $g$\nbd conjugates of a given module $W$ are isomorphic.
  (This terminology is due to Riehm~\cite{Riehm70}.)

\begin{proof}[Proof of Theorem~\ref{t:sccliffcor}]
  That $\SC(G,Z)$ is mapped into
  $\SC(H,\crp{E})$
  follows from Proposition~\ref{p:restriction},
  as we mentioned already.
  
  Conversely, assume that 
  $(\theta, \kappa)$ is a Clifford pair over the group $H$
  such that 
  $\Z(\theta,\kappa,\crp{F})\iso \crp{E}$.
  Let $M$ be the kernel of $\kappa$ and
  $\oH$ its domain.
  Let $W$ be a $\theta$\nbd quasihomogeneous 
  $\crp{F}\oH$\nbd module
  and $A=\enmo_{\crp{F}M}(W)$.
  Thus $\brcls{\theta}{\kappa}{\crp{F}}\in \SC(H,\crp{E})$ 
  is the equivalence class of $A$.
  To prove Theorem~\ref{t:sccliffcor}, we have to find a Clifford pair
  over $G$ such that the isomorphism of Proposition~\ref{p:cliffordri} 
  maps its Brauer-Clifford class to 
  $\brcls{\theta}{\kappa}{\crp{F}}$.
  
  In the following, we use the notation of Lemma~\ref{l:exseti}.
  In particular, let $\kappa^{\tensor G}\colon \oG\to G$ be 
  the homomorphism of Lemma~\ref{l:exseti}
  with kernel $N:=M^T$.
  We will show that
  the isomorphism of Proposition~\ref{p:cliffordri}
  maps
  \[ \brcls{\theta\times \underbrace{1_M\times\dotsm\times 1_M}_{\abs{T}-1}
        }{\kappa^{\tensor G}
        }{\crp{F}} \in \SC(G,Z)
  \]
  to $\brcls{\theta}{\kappa}{\crp{F}}$,
  if $\theta\neq 1_M$. 
  The case $\theta=1_M$ will be treated separately.
  
  First we define an
  $\crp{F}\oG$\nbd module $V$.
  Recall that $G=\bigdcup_{t\in T}Ht$.
  Since $\crp{E}\iso \Z(\theta,\kappa,\crp{F})$,
  we may view $W$ as an $\crp{E}$\nbd module.
  Via the homomorphism $Z\to\crp{E}$, we can view
  $W$ as a $Z$\nbd module.
  Since $G$ acts on $Z$,
  it makes sense to speak of $g$\nbd conjugates for $g\in G$.
  We now choose a $t$\nbd conjugate $W_t$ and a
  $t$\nbd isomorphism $\gamma_t\colon W\to W_t$
  for every $t\in T$. 
  Every $\gamma_t$ is an isomorphism of 
  vector spaces over $\crp{F}\subseteq \crp{E}^H$.
  Let $V$ be the direct sum of the $W_t$'s:
  \[ V := \bigoplus_{t\in T} W_t.
  \]
  Every element $v\in V$ can be written uniquely as
  $v=\sum_{t\in T} w_t\gamma_t$ with $w_t\in W$.
  The wreath product $\oH \wr \sym{T}$ acts on $V$ by
  \[ \left(\sum_{t\in T} w_t\gamma_t \right) 
      \big(\sigma \cdot (h_t)_{t\in T}\big)
     = \sum_{t\in T} w_{t\sigma^{-1}}h_t \gamma_t.
  \]
  Using the homomorphism $\oG\to \oH\wr\sym{T}$,
  we can view $V$ as $\crp{F}\oG$\nbd module.
  
  Let $S$ be a simple $G$\nbd algebra over $Z$ such that
  $Se\iso A = \enmo_{\crp{F}M} (W)$.
  Every element of $S$ can be written uniquely
  as $\sum_{t\in T} a_t^t$ with $a_t\in Se\iso A$.
  The algebra $S$ is determined up to isomorphism
  by this property.
  (As a module over the skew group ring $Z[G]$,
   the algebra $S$ is isomorphic to $A\tensor_{Z[H]}Z[G]$.)
  The algebra $S$ acts on $V$ by
  \[ \left(\sum_{t\in T}w_t\gamma_t\right)
     \left(\sum_{t\in T} a_t^t\right)
     = \sum_{t\in T}w_ta_t\gamma_t.
  \]
  It is routine to verify that this action yields
  an $G$\nbd algebra homomorphism
  $S\to \enmo_{\crp{F}N}(V)$,
  and this homomorphism is injective.
  Next we want to show that it is onto.
  Here we will need to assume that $\theta\neq 1$.
  
  Let $\beta\in \enmo_{\crp{F}N}(V)$.
  For $t$, $u\in T$ and $w\in W$, there are elements
  $w\beta_{u,t}\in W$ such that 
  \[ w\gamma_u \beta =
     \sum_{t\in T} w\beta_{u,t}\gamma_t.
  \]
  This defines maps $\beta_{u,t}\colon W\to W$
  for each $u$, $t\in T$.
  Let $(m_t)_{t\in T}\in N=M^T$ be arbitrary.
  From
  \[ w\gamma_u\beta(m_t)_{t\in T} 
      = w\gamma_u(m_t)_{t\in T} \beta
      = w m_u \gamma_u \beta \]
  it follows that
  $w\beta_{u,t}m_t = wm_u \beta_{u,t}$.
  For $u=t$ this yields
  $\beta_{u,u}\in \enmo_{\crp{F}M}(W)=A$.
  For $u\neq t$ this yields
  $wm\beta_{u,t} = w\beta_{u,t}=w\beta_{u,t}m'$ for
  all $m$, $m'\in M$.
  We know that $W\iso tU$ for some
  simple $\crp{F}M$\nbd module $U$,
  and $U$ is not the trivial $\crp{F}N$\nbd module, 
  since we assume $\theta \neq 1_M$.
  Thus no non-zero element of $U$ is fixed by all of $M$.
  It follows that $\beta_{u,t}=0$ for $u\neq t$.
  Thus
  $\beta= \sum_{t\in T} (\beta_{t,t})^t\in S$.
  We have shown that $S\iso \enmo_{\crp{F} N}(V)$.
  
  Since $S\iso \enmo_{\crp{F}N}(V)$ is a simple $G$\nbd algebra,
  it follows that $V$ is quasihomogeneous
  over some character in $ \Irr M^T$.
  Now $W_1=W$ is a direct summand of $V$ as 
  $\crp{F}N$\nbd module, 
  and the action of $(m_t)_{t\in T}$
  on this summand is given by $w(m_t)_t = wm_1$.
  Since $\theta$ is a constituent of the character of $W_{\crp{F}M}$,
  it follows that $\theta\times 1_M \times \dotsm \times 1_M$
  is a constituent of the character of $V$.
  Thus $S$ is a simple $G$\nbd algebra in
  $\brcls{\theta\times 1_M \times \dotsm \times 1_M}{
          \kappa^{\otimes G}
         }{ \crp{F} }$ as claimed.
  
  In the remaining case where $\theta=1_M$ is the trivial character, 
  it follows that
  $M$ acts trivially on $W$ and we may view $W$ as an
  $\crp{F}H$\nbd module.
  Thus $A\iso \mat_n(\crp{F})$ for some positive integer 
  $n$
  and $\crp{F}=\crp{E}$.
  Then $A$ is equivalent to the trivial
  $H$\nbd algebra $\crp{E}$.
  We need to show that 
  $Z=\bigtimes_{t\in T} \crp{E}$
  is a Schur $G$\nbd algebra, where
  $G$ acts simply by permuting the factors.
  
  For this, let $M$ be a finite group such that there is 
  a nontrivial, absolutely simple 
  $\crp{E}M$\nbd module $U$.
  (For example, take a cyclic group of order $2$ for $M$.)
  Let $\oG=M\wr G$ be the wreath product with respect
  to the action of $G$ on $T$, 
  and $N=M^T\nteq \oG$.
  Then $\oG$ acts on $V=U^T$ by
  \[(u_t)_{t\in T} \sigma(m_t)_{t\in T}
    =(u_{t\sigma^{-1}}m_t)_{t\in T},\]
  and we have $\enmo_{\crp{E}N}(V)\iso Z$
  as desired.
\end{proof}

\section{Corestriction}
\label{sec:cores}
Let $G$ be a finite group and $H\leq G$.
  Let $\crp{F}$ be a field of characteristic zero
  and $Z$ a commutative $G$\nbd algebra over
  $\crp{F}$, so that $\crp{F}\leq Z^G$.
  Given an $H$\nbd algebra $A$ over $Z$,
  we showed in~\cite{ladisch15a} 
  how to construct a certain 
  $G$\nbd algebra $\Cores_H^G(A)$ over $Z$.
  This defines a group homomorphism
  $\Cores_H^G \colon \BrCliff(H,Z)\to \BrCliff(G,Z)$
  called \defemph{corestriction}.
  If $Z$ is simple as $H$\nbd algebra, then
  it is also simple as $G$\nbd algebra,
  and we may ask if the corestriction map
  maps
  $\SC^{(\crp{F})}(H,Z)$ into $\SC^{(\crp{F})}(G,Z)$.
  It is important to fix the field $\crp{F}$ here.
  For example, assume that $Z=\crp{E}$ is a field.
  Then it may happen that the Galois extension
  $\crp{E}/\crp{E}^H$ is abelian, but the
  Galois extension $\crp{E}/\crp{F}$
  (with $\crp{F}=\crp{E}^G$, say) is not.  
  Then $\SC(H,\crp{E})=\SC^{(\crp{E}^H)}(H,\crp{E})$
  is not empty by Lemma~\ref{l:scid} 
  and Kronecker-Weber, while
  $\SC(G,\crp{E})=\SC^{(\crp{F})}(G,\crp{E})$
  and $\SC^{(\crp{F})}(H,\crp{E})$ are empty.  
  The next result has content only when $\SC^{(\crp{F})}(H,Z)$
  is non-empty.
\begin{thm}\label{t:coresschurcl}
    With the notation just introduced, 
    the corestriction homomorphism
    maps $\SC^{(\crp{F})}(H,Z)$ into $\SC^{(\crp{F})}(G,Z)$.
\end{thm}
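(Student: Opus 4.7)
The approach is to adapt the wreath-product construction from the proof of Theorem~\ref{t:sccliffcor}. There, the asymmetric character $\theta \times 1_M \times \dotsm \times 1_M$ together with the direct sum $V = \bigoplus_t W_t$ realised the identification of Proposition~\ref{p:cliffordri}; for corestriction, the natural candidates are the "diagonal" character $\Theta = \theta \times \theta \times \dotsm \times \theta$ together with a tensor product $V = \bigotimes_t W_t$, since $\Cores_H^G$ has a multiplicative (rather than direct-sum) flavour across the $|T|$ cosets of $H$ in $G$.

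Concretely, start with a Schur $H$\nbd algebra $A = \enmo_{\crp{F}M}(W) \in \brcls{\theta}{\kappa}{\crp{F}}$, with Clifford pair $(\theta, \kappa \colon \oH \to H)$, $M = \Ker\kappa$, and $W$ a $\theta$\nbd quasihomogeneous $\crp{F}\oH$\nbd module. Apply Lemma~\ref{l:exseti} with a right transversal $T$ of $H$ in $G$ to obtain $\kappa^{\tensor G}\colon \oG \to G$ with kernel $N = M^T$, and set $\Theta = \theta \times \dotsm \times \theta \in \Irr N$. For each $t \in T$ pick a $t$\nbd conjugate $W_t$ of $W$ (as a $Z$\nbd module) together with a $t$\nbd isomorphism $\gamma_t\colon W \to W_t$, normalising $\gamma_1 = \mathrm{id}$. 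Form the tensor product $V = \bigotimes_{t \in T} W_t$, and define an $\oG$\nbd action on $V$ through the homomorphism $\oG \to \oH \wr \sym{T}$: the diagonal part $(\hat{h}_t)_{t \in T}$ acts on the $t$\nbd th tensor factor through the $\oH$\nbd structure of $W_t$, while $\sigma \in \sym{T}$ permutes the tensor factors via the $\gamma_t$'s.

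One then verifies: (i) these formulae really make $V$ into an $\crp{F}\oG$\nbd module; (ii) $V$ is $\Theta$\nbd quasihomogeneous, which is immediate from the product formula $\Theta(m_1,\dotsc,m_{|T|}) = \prod_t \theta(m_t)$ for the character of a tensor product; and (iii) $\enmo_{\crp{F}N}(V) \iso \Cores_H^G(A)$ as $G$\nbd algebras over $Z$. For (iii) one argues in close parallel to the corresponding computation in the proof of Theorem~\ref{t:sccliffcor}: an $\crp{F}N$\nbd endomorphism $\beta$ of $V$ is analysed via its action on elementary tensors, it decomposes along the factor decomposition $N = M^T$, and the hypothesis $\theta \neq 1_M$ kills the off-"diagonal" components. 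This identifies $\enmo_{\crp{F}N}(V)$ with $\bigotimes_{t, Z} \enmo_{\crp{F}M}(W_t)$, matching the explicit definition of $\Cores_H^G(A)$ from~\cite{ladisch15a}. The exceptional case $\theta = 1_M$ is handled by an ad hoc wreath-product construction exactly as at the end of the proof of Theorem~\ref{t:sccliffcor}.

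The main obstacle is showing that the isomorphism in~(iii) respects the full $G$\nbd action, not merely the $Z$\nbd algebra structure. This amounts to unwinding how $\sigma(g) \in \sym{T}$ and $h(t,g) \in H$ from Lemma~\ref{l:wreath_hom} interact with the formulae defining $\Cores_H^G$ in~\cite{ladisch15a}, and is ultimately forced by the cocycle-like identity $tg = h(t,g)(t \circ g)$ together with the compatibility of the $\gamma_t$'s with the $G$\nbd action on $Z$.
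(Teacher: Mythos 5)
Your overall architecture is the paper's: pass to $\kappa^{\tensor G}\colon\oG\to G$ via Lemma~\ref{l:exseti}, take $t$\nbd conjugates $W_t$ with $t$\nbd isomorphisms $\gamma_t$, form $V=\bigtensor_{t\in T}W_t$ over $Z$, let $\oG$ act through $\oH\wr\sym{T}$, and identify $\enmo_{\crp{F}N}(V)$ with $\Cores_H^G(A)\iso S^{\tensor G}$ equivariantly. But your key step (iii) is carried by an argument that does not transfer. The computation in the proof of Theorem~\ref{t:sccliffcor} that you invoke ``in close parallel'' depends on $V$ being a \emph{direct sum}: an $\crp{F}N$\nbd endomorphism of $\bigoplus_t W_t$ is a matrix $(\beta_{u,t})$ of maps between summands, and $\theta\neq 1_M$ kills the off-diagonal entries. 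An endomorphism of a \emph{tensor product} admits no such decomposition into components indexed by pairs of factors, so there are no ``off-diagonal components'' to kill and the hypothesis $\theta\neq 1_M$ plays no role. What is actually needed is a commutant argument: the natural $Z$\nbd algebra map $S^{\tensor G}=\bigtensor_{t\in T}S^{\tensor t}\to\enmo_{\crp{F}N}(V)$ is injective because $S^{\tensor G}$ is simple, and surjective because $\enmo_{\crp{F}N}(V)=\enmo_{ZN}(V)=\C_{\enmo_Z(V)}\bigl(ZM\tensor_Z\dotsm\tensor_Z ZM\bigr)\iso \enmo_{ZM}(W)\tensor_Z\dotsm\tensor_Z\enmo_{ZM}(W)\iso S^{\tensor G}$, using $ZN\iso(ZM)^{\tensor T}$ over $Z$. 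In particular no case distinction for $\theta=1_M$ arises; the ad hoc wreath-product construction you import from the end of Theorem~\ref{t:sccliffcor} solved a different problem there (realizing the identity class of $\BrCliff(G,Z)$ as a Schur algebra) and is not the missing ingredient here. The remaining work, which you correctly flag, is the well-definedness of the $\oG$\nbd action on the $Z$\nbd balanced tensor product and the $G$\nbd equivariance of the isomorphism; both come down to the identity $tg=h(t,g)(t\circ g)$ and the semilinearity of the $\gamma_t$, exactly as in the paper.

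Your step (ii) is also incorrect as stated: $V$ is in general \emph{not} quasihomogeneous over the diagonal character $\theta\times\dotsm\times\theta$, and the ``product formula'' justification fails because the tensor product is taken over $Z$, not over $\crp{F}$, and the factors $W_t$ are Galois/conjugation-twisted ($\gamma_t$ is only $t$\nbd semilinear over $Z$). The constituents of the character of $V_N$ have the form $\bigtimes_{t\in T}\theta_t$ with each $\theta_t$ in the $\Gamma\times H$\nbd orbit of $\theta$ (in fact $\bigtimes_{t\in T}\theta^{t^{-1}}$ occurs); see Corollary~\ref{c:coreschars} and the remark following it. This error does not sink the theorem, because the statement only requires quasihomogeneity over \emph{some} irreducible character of $N=M^T$: once $\enmo_{\crp{F}N}(V)\iso S^{\tensor G}$ is known to be simple, such a character exists automatically, and that already places the class of $\Cores_H^G(A)$ in $\SC^{(\crp{F})}(G,Z)$. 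But you should not base the argument on the diagonal character claim, which is false and, in the correct proof, unnecessary.
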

\begin{proof}
    Let $(\theta,\kappa)$ be a Clifford pair,
    where $\theta\in \Irr M$ with
    $Z\iso \Z(\theta,\kappa,\crp{F})$ as $H$\nbd algebras.
    Suppose that $W$ is an 
    $\crp{F}\oH$\nbd module which is 
    $\theta$\nbd quasihomogeneous,
    and let $S=\enmo_{\crp{F}M}(W)$.
    We will use the notation introduced in
    Lemmas~\ref{l:wreath_hom} and~\ref{l:exseti}.   
    In particular, let $G=\bigdcup_{t\in T}Ht$
    and recall the diagram
        \[\begin{tikzcd}
            1 \rar & M^T \dar[xshift=-0.3em,equals] \rar 
                   & \oG \dar \rar{\kappa^{\tensor G}} 
                   & G \dar{\phi} \rar
                   & 1 \\
            1 \rar & M^T \rar 
                   & \oH \wr \sym{T} \rar{\kappa\wr 1}
                   & H \wr \sym{T} \rar
                   & 1             
          \end{tikzcd}
        \]    
    with exact rows from Lemma~\ref{l:exseti}.     
    We are going to show that 
    $\Cores_H^G(S)=S^{\tensor G}\iso \enmo_{\crp{F}[M^T]}(V)$
    for some 
    $\crp{F}\oG$\nbd module $V$.
    
      Since $\enmo_{\crp{F}M}(W)=S$, 
      we can view $W$ as a module over
      $Z= \Z(S)$.
      Recall that $G$ acts on $Z$.    
      We now choose a $t$\nbd conjugate $W_t$ 
      (\hyperlink{defi:conjugatemodule}{see}
       p.~\pageref{defi:conjugatemodule}) 
      and a
      $t$\nbd isomorphism $\gamma_t\colon W\to W_t$
      for every $t\in T$.
      Every $\gamma_t$ is an isomorphism of 
      vector spaces over $\crp{F}\subseteq Z^H$.
      We set
      \[ V = \bigtensor_{t\in T} W_t
         \quad\text{(tensor product over $Z$)}.
      \]
      We want to define an action of $\oG$ on $V$.
      So let $\big(\sigma(h_t)_{t\in T}, g\big)\in \oG$ and
      let $w_t\in W$.
      Define
      \[ \left(\bigtensor_{t\in T} w_t\gamma_t \right)
         \big( \sigma (h_t)_{t\in T},g \big)
         = \bigtensor_{t\in T} w_{t\circ g^{-1}} h_t \gamma_t.
      \]
      First we show that this yields a well defined
      endomorphism of $V$.
      So let $w_t\in W$ and $z\in Z$, and fix $t_0\in T$.
      Suppose
      \[ u_t = \begin{cases}
                  w_t, \quad &\text{if } t\neq t_0,\\
                  w_t z^{t_0^{-1}}, \quad &\text{if } t = t_0. 
               \end{cases}
      \]
      Then $u_{t_0}\gamma_{t_0}= w_{t_0}\gamma_{t_0}z$
      and
      \[ \bigtensor_{t\in T} u_t \gamma_t 
         = \left(\bigtensor_{t\in T} w_t\gamma_t
           \right) z. 
      \]
      Applying the definition to the left side yields
      \begin{align*}
        \left( \bigtensor_{t\in T} u_t\gamma_t \right)
          \big( \sigma (h_t)_{t\in T}, g \big)
        &= \bigtensor_{t\in T} u_{t\circ g^{-1}}h_t \gamma_t
         = \bigtensor_{t\in T} v_t\gamma_t,  
      \end{align*}
      where
      \[ v_t = \begin{cases}
                  w_{t\circ g^{-1}} h_t \quad 
                      &\text{if }t\neq t_0 \circ g, \\
                  w_{t_0}z^{t_0^{-1}}h_{t_0\circ g} \quad
                      &\text{if } t=t_0\circ g. 
              \end{cases}
      \]
      Thus
      \begin{align*}
        v_{t_0\circ g}\,\gamma_{t_0\circ g}
           = w_{t_0}\, z^{t_0^{-1}} h_{t_0\circ g}\, \gamma_{t_0\circ g}
           &= w_{t_0}  h_{t_0\circ g}\, \gamma_{t_0\circ g} \,
              z^{t_0^{-1} (h_{t_0\circ g}\kappa)\, t_0\circ g}
           \\
           &= w_{t_0} h_{t_0\circ g} \, \gamma_{t_0\circ g}\, z^{g},
      \end{align*}
      where the last equation follows from
      $(h_{t_0\circ g}\kappa)t_0\circ g = t_0 g$,
      which follows from the fact that
      $ \sigma (h_t\kappa)_{t\in T}= g\phi$
      (see Lemma~\ref{l:wreath_hom}).
      It follows that
      \[ \bigtensor_{t\in T} v_t\gamma_t
         = \left( \bigtensor_{t\in T} w_{t\circ g^{-1}}h_t\gamma_t
           \right) z^{g}.
      \]
      This does not depend on $t_0$.
      Thus multiplication with
      $\big( \sigma (h_t)_{t\in T}, g \big)$ yields a well-defined 
      endomorphism of $V$.
      
      Having established this, 
      it is routine to check that we actually have an action
      of $\oG$ on $V$.
      
      Our next goal is to show that
      $\enmo_{\crp{F}N}V\iso S^{\tensor G}$
      as $G$\nbd algebra over $Z$.
      First recall that
      $S^{\tensor G}\iso \bigtensor_{t\in T} S^{\tensor t}$,
      where the tensor product is over $Z$ and each
      $S^{\tensor t}$ is a $t$\nbd conjugate of $S$.
      We define an action of $S^{\tensor G}$ on the module
      $V=\bigtensor_{t\in T}W_t$ by
      \[ \left(\bigtensor_{t\in T} w_t\gamma_t \right) 
         \left(\bigtensor_{t\in T} s_t^{\tensor t} \right)
         = \bigtensor w_ts_t\gamma_t.
      \]
      It is routine to see that this yields a well-defined
      homomorphism 
      $S^{\tensor G}\to \enmo_{\crp{F}N}V$
      of $Z$\nbd algebras.
      We claim that it is actually an isomorphism and commutes
      with the action of $G$.
      
      The homomorphism is injective, since $S^{\tensor G}$
      is simple.
      We know that $S\iso \enmo_{\crp{F}M}W=\enmo_{Z M}W$.
      Clearly $Z\subseteq \enmo_{\crp{F}N}V$, so that
      $\enmo_{\crp{F}N}V=\enmo_{ZN}V$.
      But $ZN\iso ZM \tensor_Z \dotsm \tensor_Z ZM= (ZM)^{\tensor T}$
      (tensor product over $Z$ of $\abs{T}$ factors), and so
      \begin{align*}
        \enmo_{ZN}V &= \C_{\enmo_Z(V)}(ZN)
        \\            
                    &= \C_{\enmo_Z(V)}(ZM\tensor_Z \dotsm \tensor_Z ZM)
                    \\
                    &\iso S\tensor_Z \dotsm \tensor_Z S
                     \iso S^{\tensor G} \quad \text{(as $Z$-algebra)}.
      \end{align*}
      This shows that the homomorphism above is an isomorphism
      of $Z$\nbd algebras.
      
      Finally we have to show that this isomorphism is compatible
      with the action of $G$ on $S^{\tensor G}$ and
      $\enmo_{\crp{F}N}V$.
      Let $g\in G$ and 
      $\bigtensor_{t\in T} s_t^{\tensor t}\in S^{\tensor G}$.
      Then
      \begin{align*}
        \left(\bigtensor_{t\in T} s_t^{\tensor t}\right)^g
         &= \bigtensor_{t\in T} s_{t}^{\tensor tg}
          = \bigtensor_{t\in T} 
               \left( s_{t\circ g^{-1}}^{ (t\circ g^{-1})gt^{-1}
                                        }
               \right)^{\tensor t}.
      \end{align*}
      Thus 
      \[ \left(\bigtensor_{t\in T} w_t\gamma_t\right)
         \left( \bigtensor_{t\in T} s_t^{\tensor t} \right)^g
         = \bigtensor_{t\in T} w_t 
                 s_{t\circ g^{-1}}^{(t\circ g^{-1})gt^{-1}}\gamma_t.
      \]
      Now choose $\sigma\in \sym{T}$ and
      $h_t\in \oH$ ($t\in T$) such that
      $(\sigma(h_t)_{t\in T}, g)\in \oG$.
      Recall that this means that $t\sigma=t\circ g$ and
      $h_t \kappa = (t\circ g^{-1})gt^{-1}$.
      We compute
      \begin{align*}
      \MoveEqLeft[5] \left(\bigtensor_{t\in T} w_t\gamma_t
                  \right)
                  \big(\sigma(h_t)_{t\in T},g 
                  \big)^{-1}
                  \left( \bigtensor_{t\in T} s_t^{\tensor t} 
                  \right)
                  \big( \sigma (h_t)_{t\in T}, g
                  \big)
      \\
        &= \left( \bigtensor_{t\in T} w_{t\sigma}
                        h_{t\sigma}^{-1}
                        s_t \gamma_t
           \right)
           \big( \sigma(h_t)_{t\in T}, g\big)
        \\
        &= \bigtensor_{t\in T} w_th_t^{-1} 
                          s_{t\sigma^{-1}}
                          h_t \gamma_t
        \\
        &= \bigtensor_{t\in T} w_t 
             s_{t\circ g^{-1}}^{h_t\kappa} 
             \gamma_t
        \\
        &= \bigtensor_{t\in T} w_t 
                 s_{t\circ g^{-1}}^{(t\circ g^{-1})gt^{-1}}
                 \gamma_t.
      \end{align*}
      This finishes the proof.
    \end{proof}
    \begin{cor}\label{c:coreschars}
      Let $Z$ be a commutative $G$\nbd algebra and $H\leq G$.
      Assume that $Z$ has no nontrivial $H$\nbd invariant ideals.
      Let
      \[ 
         \begin{tikzcd}
            1 \rar & M \rar & \oH \rar{\kappa} & H \rar & 1
         \end{tikzcd}
      \]
      be an exact sequence of groups and $\theta\in \Irr M$
      with $Z\iso \Z(\theta,\kappa,\crp{F})$
      as $H$\nbd algebras.
      Write $B=\{\phi\in \Irr M \mid \phi_{|Z}\neq 0\}.$
      Then
      \[ \Cores \brcls{\theta}{\kappa}{\crp{F}}
         = \brcls{\bigtimes_{t\in T}\theta_t}{\kappa^{\tensor G}}{\crp{F}},
      \]          
      where each $\theta_t\in B$.
    \end{cor}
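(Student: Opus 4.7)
The proof refines the construction from the proof of Theorem~\ref{t:coresschurcl}. From that construction, $\Cores\brcls{\theta}{\kappa}{\crp{F}}$ is the equivalence class of $\enmo_{\crp{F}N}(V)$, where $V=\bigtensor_{t\in T}^Z W_t$ is the explicit $\crp{F}\oG$-module built there and $N=M^T$. Since $\enmo_{\crp{F}N}(V)$ is a simple $G$-algebra over $Z$, $V$ is $\xi$-quasihomogeneous for some $\xi\in\Irr N=\Irr(M^T)$, and so $\Cores\brcls{\theta}{\kappa}{\crp{F}}=\brcls{\xi}{\kappa^{\tensor G}}{\crp{F}}$. Writing $\xi=\prod_{t\in T}\theta_t$ with $\theta_t\in\Irr M$, and noting that the Brauer-Clifford class only depends on the $\Gamma\times G$-orbit of $\xi$, it suffices to pick $\xi$ so that every $\theta_t\in B$.

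I would do this by analyzing the central idempotents of $\Z(\crp{F}N)$. From the explicit formula for the $N$-action on $V$ in the proof of Theorem~\ref{t:coresschurcl}, the element $(m_t)\in M^T$ sends $\bigtensor_s w_s\gamma_s$ to $\bigtensor_s (w_sm_s)\gamma_s$; so the $t$-th direct factor $M_t\leq N$ acts only on the $t$-th tensor slot of $V$, and $\gamma_t$ becomes an isomorphism $W\iso W_t$ of $\crp{F}M$-modules (after identifying $M$ with $M_t$). Consequently the idempotent $e:=\bigtensor_{t\in T}e_{\theta_t,\crp{F}}\in\Z(\crp{F}N)$ acts on each pure tensor by applying $e_{\theta_t,\crp{F}}$ in the $t$-th slot, so that $V\cdot e=\bigtensor_{t\in T}^Z(W_te_{\theta_t,\crp{F}})$.

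Since $V$ is $\xi$-quasihomogeneous, $V$ equals its $(e_{\xi,\crp{F}})^{\oG}$-component, so after replacing $\xi$ by a suitable $G$-conjugate we may assume $V\cdot e_{\xi,\crp{F}}\ne 0$. As $e_{\xi,\crp{F}}$ is a subsum of $e$ in $\Z(\crp{F}N)$, this forces $V\cdot e\ne 0$, and hence $W_t e_{\theta_t,\crp{F}}\ne 0$ for every $t\in T$. Transporting along $\gamma_t$ gives $W e_{\theta_t,\crp{F}}\ne 0$, i.e., $\theta_t$ is a $\compl$-constituent of $\chi_W|_M$. Since $W$ is $\theta$-quasihomogeneous over $\oH$, all $\compl$-constituents of $\chi_W|_M$ lie in the $\Gamma\times H$-orbit of $\theta$, which by the definition of $B$ is exactly $B$; hence $\theta_t\in B$, as required. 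The main technical obstacle is the implication ``$V\cdot e\ne 0\Rightarrow W_te_{\theta_t,\crp{F}}\ne 0$ for every $t$'': this needs the tensor factors $W_t$ of $V$ to be ``fully supported'' on $Z$ in a suitable sense (so that $W_te_{\theta_t,\crp{F}}=0$ for a single $t$ would annihilate the whole tensor product over $Z$), which should follow from each $W_t$ being a conjugate of the quasihomogeneous module $W$.
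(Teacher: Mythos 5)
Your proposal is correct, and it follows the same overall strategy as the paper -- identify $\Cores\brcls{\theta}{\kappa}{\crp{F}}$ with the class of $\enmo_{\crp{F}N}(V)$ for the module $V=\bigtensor_{t\in T}W_t$ (tensor over $Z$) from the proof of Theorem~\ref{t:coresschurcl}, use that $V$ is quasihomogeneous over some $\xi\in\Irr(M^T)$, and show the factors of $\xi$ lie in $B$ -- but the decisive step is carried out differently. You argue via central idempotents: the $t$\nbd th copy of $M$ acts through the $t$\nbd th slot via $\gamma_t$, the idempotents $e_{\theta_t,\crp{F}}\in\crp{F}M$ act $Z$\nbd linearly slot-wise, $e_{\xi,\crp{F}}$ is dominated by $\bigtensor_t e_{\theta_t,\crp{F}}$ (this uses $\crp{F}(\theta_t)\subseteq\crp{F}(\xi)$, which is fine), and a nonvanishing component forces $We_{\theta_t,\crp{F}}\neq0$ for every $t$. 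The paper instead maps the tensor power $W^{\tensor\abs{T}}$ over $\crp{F}$ onto $V$ by $\bigtensor_t w_t\mapsto\bigtensor_t w_t\gamma_t$, notes this is a homomorphism (indeed a surjection) of $\crp{F}[M^T]$\nbd modules, and simply compares characters: since the character of $W_M$ is a multiple of $\sum_{\phi\in B}\phi$, every irreducible constituent of $V_N$ is a product of members of $B$. The two routes rest on the same structural facts (slot-wise $M$\nbd action via the $t$\nbd isomorphisms, quasihomogeneity of $V$); the paper's character-theoretic comparison avoids your idempotent bookkeeping, while your version makes explicit which primitive idempotents survive. One remark: the ``main technical obstacle'' you flag at the end is not one. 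You only need the trivial direction: if $W_{t_0}e_{\theta_{t_0},\crp{F}}=0$ for a single $t_0$, then the summand $\bigtensor_t^{Z}(W_te_{\theta_t,\crp{F}})$ of $V$ has a zero factor and vanishes, so $Ve\neq0$ already forces all $W_te_{\theta_t,\crp{F}}\neq0$; no ``full support'' property of the $W_t$, and no statement about nonvanishing of tensor products of nonzero modules over $Z$, is required anywhere in the argument.
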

    \begin{proof}
      Note that $B$ is the orbit of $\theta$ under the action
      of $\Gamma \times H$, where
      $\Gamma = \Gal( \crp{F}(\theta)/\crp{F})$.
      Let $\beta= \sum_{\phi\in B} \phi$.
      Let $W$ be a $\theta$\nbd quasihomogeneous module.
      Then the character of $W_{M}$ 
      is an integer multiple of $\beta$.
      View $W^{\tensor \abs{T}}=\bigtensor_{t\in T}W$
      (tensor product over $\crp{F}$)
      as module over $\crp{F}[M^T]$.
      This module affords the character
      $\bigtimes_{t\in T} \beta$.
      Let 
      $V$ be the $\crp{F} \oG$\nbd module 
      constructed in the proof of Theorem~\ref{t:coresschurcl}.
      Consider the map
      \[ W^{\tensor \abs{T}} \ni
         \bigtensor_{t\in T} w_t
         \mapsto \bigtensor_{t\in T}w_t\gamma_t\in V.
      \]
      This maps a tensor product over $\crp{F}$ to a tensor product
      over $Z$.
      It is easy to see that this map
      is a homomorphism of $\crp{F}[M^T]$\nbd modules.
      Thus $V_N$ affords a character which is a sum of
      characters of the form $\bigtimes_{t\in T} \theta_t$
      with $\theta_t\in B$.
      This is the claim. 
    \end{proof}
    \begin{remark}
      The corollary does not tell the full story, 
      namely what the $\theta_t$ actually are. 
      Note that since $Z$ is assumed to be a $G$\nbd algebra,
      we have an action of $G$ on $B$.
      The restriction of this action to $H$ agrees with the
      usual action of $H\iso \oH/M$ on $B\subseteq \Irr M$.
      With somewhat more effort, one can show that
      the character of the module $V$ in the proof of 
      Theorem~\ref{t:coresschurcl}
      has the character $\bigtimes_{t\in T} \theta^{t^{-1}}$
      as constituent.
      Thus
      \[ \Cores \brcls{\theta}{\kappa}{\crp{F}}
               = \brcls{\bigtimes_{t\in T}\theta^{t^{-1}}
                       }{\kappa^{\tensor G}}{\crp{F}}.
      \] 
      But for the applications we have in mind, it is enough
      to know the weaker statement of Corollary~\ref{c:coreschars}.
    \end{remark}
  Recall that the Brauer-Clifford group is an abelian torsion group.
  It is thus the direct sum of its $p$\nbd torsion parts where $p$
  runs through the primes.
  The same is true for the Schur-Clifford group, if defined.
  Denote the $p$\nbd part of an abelian group $A$ by $A_p$.
  \begin{cor}
    Let $Z$ be a simple $G$\nbd algebra over $\crp{F}$,
    $P$ a Sylow $p$\nbd subgroup of $G$ for some prime $p$
    and assume that $Z$ is simple as $P$\nbd algebra.
    If $\SC^{(\crp{F})}(G,Z)$ is not the empty set,
    then 
    $\SC^{(\crp{F})}(G,Z)_p$ 
    is isomorphic to a subgroup of $ \SC^{(\crp{F})}(P,Z)_p$.
    Moreover, an equivalence class $a = [S]\in \BrCliff(G,Z)_p$ 
    belongs to
    $\SC^{(\crp{F})}(G,Z)$ if and only if\/
    $\Res^G_P(a)\in \SC^{(\crp{F})}(P,Z)$.    
  \end{cor}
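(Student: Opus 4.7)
The plan is to combine the restriction statement (Proposition~\ref{p:restriction}) and the corestriction statement (Theorem~\ref{t:coresschurcl}) with the standard identity $\Cores_P^G \circ \Res_P^G = [G:P]\cdot \mathrm{id}$ on $\BrCliff(G,Z)$, which is one of the basic properties of corestriction established in~\cite{ladisch15a}. Since $p\nmid [G:P]$, multiplication by $[G:P]$ is an automorphism on the $p$\nbd primary component $\BrCliff(G,Z)_p$, so this identity forces $\Res^G_P$ to be injective on $\BrCliff(G,Z)_p$. Both maps are well defined on the Schur-Clifford subsets by the two results just cited, noting that $\SC^{(\crp{F})}(P,Z)$ is nonempty because $\Res$ of any class in $\SC^{(\crp{F})}(G,Z)$ lands in it.

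For the first assertion, I would therefore restrict $\Res^G_P$ to the $p$\nbd parts of the Schur-Clifford groups: it maps into $\SC^{(\crp{F})}(P,Z)_p$ by Proposition~\ref{p:restriction}, and it is injective by the above remark. This exhibits $\SC^{(\crp{F})}(G,Z)_p$ as a subgroup of $\SC^{(\crp{F})}(P,Z)_p$, which is what we want.

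For the second, more substantive assertion, the forward direction is just Proposition~\ref{p:restriction}. For the converse, take $a\in \BrCliff(G,Z)_p$ with $\Res^G_P(a) \in \SC^{(\crp{F})}(P,Z)$. By Theorem~\ref{t:coresschurcl}, $\Cores_P^G(\Res_P^G(a)) \in \SC^{(\crp{F})}(G,Z)$; but this element equals $[G:P]\cdot a$. Let $n$ be the order of $a$, a $p$\nbd power. Since $\gcd([G:P],n)=1$, one can pick an integer $k$ with $k\,[G:P]\equiv 1\pmod n$, and then $a = k\cdot([G:P]\cdot a)$ lies in $\SC^{(\crp{F})}(G,Z)$ because the latter is a subgroup (Theorem~\ref{t:main}, whose general case is already in hand by this point in the paper).

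The only step that requires any care is the identity $\Cores\circ \Res = [G:P]$; everything else is a direct concatenation of the restriction and corestriction results already proved. I would simply quote this identity from~\cite{ladisch15a}, or, if it is not literally stated there, remark briefly that it follows from the explicit construction $S^{\tensor G}$ used in Section~\ref{sec:cores} applied to the restriction of a $G$\nbd algebra, exactly as for the classical Brauer group. The use of the hypothesis that $Z$ is simple as a $P$\nbd algebra enters only to ensure that $\SC^{(\crp{F})}(P,Z)$ and the corestriction map into $\BrCliff(G,Z)$ make sense in the forms used above.
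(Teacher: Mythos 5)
Your proposal is correct and follows essentially the same route as the paper: the identity $\Cores^G_P\circ\Res^G_P = a\mapsto a^{\abs{G:P}}$ (Theorem~6.3 of~\cite{ladisch15a}) gives injectivity of $\Res^G_P$ on the $p$\nbd part, Proposition~\ref{p:restriction} gives the forward direction, and Theorem~\ref{t:coresschurcl} together with coprimality of $\abs{G:P}$ and the order of $a$ (plus the subgroup property of $\SC^{(\crp{F})}(G,Z)$) gives the converse. The paper phrases the last step as $a\in\erz{a^{\abs{G:P}}}$, which is exactly your choice of $k$ with $k\abs{G:P}\equiv 1 \pmod{n}$.
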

  \begin{proof}
    By Theorem~6.3 in \cite{ladisch15a} 
    the composition
    \[ \begin{tikzcd}
          \BrCliff(G,Z) \rar{\Res^G_P} 
          & \BrCliff(P,Z) \rar{\Cores^G_P} 
          & \BrCliff(G,Z)          
       \end{tikzcd}
    \]
    maps an element $a=[S]$ to $a^{\abs{G:P}}=[S]^{\abs{G:P}}$.
    Thus $\Res^G_P$ is injective on the $p$\nbd torsion part.
    By Proposition~\ref{p:restriction},
    it maps
    $\SC^{(\crp{F})}(G,Z)_p$
    into $\SC^{(\crp{F})}(P,Z)_p$.
    
    Conversely, if $\Res^G_P(a)\in \SC^{(\crp{F})}(P,Z)$
    for some $a\in \BrCliff(G,Z)$, then
    $a^{\abs{G:P}}\in \SC^{(\crp{F})}(G,Z)$ by 
    Theorem~\ref{t:coresschurcl}.
    Thus if the order of $a$ is a power of $p$, then
    $a \in \erz{a^{\abs{G:P}}}$ is in $\SC^{(\crp{F})}(G,Z)$, too.
  \end{proof}
  
\section{A special case and a conjecture}
\label{sec:ausblick}
  The results of this paper suggest that one can carry through a study
  of the Schur-Clifford subgroup for various groups 
  $G$ and simple commutative $G$\nbd algebras $Z$.
  For example, we have seen that for $Z=\compl$ with trivial
  action of the group $G$, we have
  $\BrCliff(G,\compl) =\SC(G,\compl)$
  and that $\SC(G,\compl)\iso H^2(G,\compl^*)$ (Example~\ref{ex:complcohom}).
  Moreover, every element in $\SC(G,\compl)$ can be realized
  as the class of a Clifford pair $(\theta,\kappa)$ such that
  $\Ker \kappa $ is a cyclic central subgroup of $\oG$, the domain of
  $\kappa$.
  In a follow-up paper~\cite{ladisch14c}, we prove a similar result
  for arbitrary fields $\crp{E}$ with $G$\nbd action, 
  showing that every element in $\SC(G, \crp{E})$ 
  comes from a Clifford pair $(\theta,\kappa)$ 
  such that the normal subgroup $\Ker\kappa$
  (which is the domain of $\theta$)
  is cyclic by abelian.
  
  We might also discuss how $G$ and $Z$ bear upon group theoretic
  properties of $\SC(G,Z)$.
  We discuss now a slightly more general case than the case
  $\compl$.
  First, observe the following.
  Given an element of the Brauer-Clifford group
  $\BrCliff(G,\crp{E})$, where $\crp{E}$ is a field on which $G$
  acts, we may take its equivalence class in the Brauer group
  $\Br(\crp{E})$.
  This defines a group homomorphism
  $\BrCliff(G,\crp{E}) \to \Br(\crp{E})$.
  It is clear that this homomorphism maps the Schur-Clifford group
  into the Schur subgroup of the Brauer group.
  (In fact, this is the case $H=\{1\}$ of 
   Proposition~\ref{p:restriction}.)
  
  Turull has shown~\cite[Theorem~3.12]{turull09b} 
  that the kernel of the homomorphism is isomorphic to
  the second cohomology group $H^2 (G,\crp{E}^*)$.
  In the case where $G$ acts trivially on $\crp{E}=\crp{F}$,
  Turull has shown~\cite[Corollary~3.13]{turull09b}
  that $\BrCliff(G,\crp{F})\iso \Br(\crp{F})\times H^2 (G,\crp{F}^*)$.
  The Schur-Clifford group decomposes accordingly:  
  \begin{prop}
    Suppose that\/ $\crp{F}$ is a field on which the group $G$
    acts trivially.
    Let $\mathcal{S}(\crp{F})\leq \Br(\crp{F})$
    be the Schur subgroup of the Brauer group of\/ $\crp{F}$.
    Then 
    \[ \SC(G, \crp{F})
       \iso  \mathcal{S}(\crp{F})
          \times  A
       \quad\text{for some}\quad
       A \leq H^2(G, \crp{F}^*).
    \]
  \end{prop}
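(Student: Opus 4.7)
The plan is to exploit the splitting of the Brauer-Clifford group due to Turull and check that it respects the Schur-Clifford subgroup on the nose. Concretely, Turull's result~\cite[Corollary~3.13]{turull09b} provides a split exact sequence
\[
  \begin{tikzcd}
     1 \rar & H^2(G,\crp{F}^*) \rar & \BrCliff(G,\crp{F}) \rar[shift left]{\pi}
            & \Br(\crp{F}) \rar & 1\rlap{,}
            \ar[l, shift left, "\iota"]
  \end{tikzcd}
\]
where $\pi$ is the forgetful map (discarding the $G$-action) and $\iota$ sends a central simple $\crp{F}$-algebra $B$ to itself equipped with the trivial $G$-action. Once this is in place, I only have to verify two complementary facts: first, $\pi(\SC(G,\crp{F})) = \mathcal{S}(\crp{F})$, and second, $\iota(\mathcal{S}(\crp{F})) \subseteq \SC(G,\crp{F})$. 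Together these yield $\SC(G,\crp{F}) = \iota(\mathcal{S}(\crp{F})) \oplus (\SC(G,\crp{F}) \cap \ker\pi)$, and setting $A = \SC(G,\crp{F}) \cap \ker\pi$ completes the proof.

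The easy direction is $\pi(\SC(G,\crp{F})) \subseteq \mathcal{S}(\crp{F})$: a Schur $G$-algebra $S = \enmo_{\crp{F}N}(V)$ over $\crp{F}$, when its $G$-structure is forgotten, is Morita equivalent to a block of $\crp{F}N$ by the remarks following the definition of a Schur $G$-algebra, hence its class lands in the classical Schur subgroup. For the reverse direction, and simultaneously for the inclusion $\iota(\mathcal{S}(\crp{F})) \subseteq \SC(G,\crp{F})$, the key observation is that any classical Schur algebra comes from a Clifford pair with trivial $G$-action. Given a finite group $N$ and an irreducible character $\theta \in \Irr N$, I take $\oG = G \times N$ and let $\kappa \colon \oG \to G$ be the projection, so that $\Ker \kappa = N$. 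A $\theta$-quasihomogeneous $\crp{F}N$-module $V$ becomes an $\crp{F}\oG$-module by letting $G$ act trivially, and then $\enmo_{\crp{F}N}(V)$ carries the trivial $G$-action (because $G$ acts trivially on $V$) and is a Schur $G$-algebra over $\crp{F}$. Its Brauer-Clifford class is $\iota$ of its classical Brauer class, which shows simultaneously that $\iota$ sends $\mathcal{S}(\crp{F})$ into $\SC(G,\crp{F})$ and that every class in $\mathcal{S}(\crp{F})$ is in the image of $\pi|_{\SC(G,\crp{F})}$.

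Putting the pieces together, the restriction of the split exact sequence to $\SC(G,\crp{F})$ gives a short exact sequence
\[
  \begin{tikzcd}
    1 \rar & A \rar & \SC(G,\crp{F}) \rar[shift left]{\pi} & \mathcal{S}(\crp{F}) \rar
           & 1\rlap{,} \ar[l,shift left,"\iota"]
  \end{tikzcd}
\]
where $A := \SC(G,\crp{F}) \cap H^2(G,\crp{F}^*) \leq H^2(G,\crp{F}^*)$ and the splitting $\iota$ restricts to a section of $\pi$ on $\SC(G,\crp{F})$. This yields the claimed isomorphism $\SC(G,\crp{F}) \iso \mathcal{S}(\crp{F}) \times A$. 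The only real subtlety to be careful about is the compatibility of $\iota$ with the Schur-Clifford subgroup, that is, that a Schur algebra given trivial $G$-action actually arises from the construction via a Clifford pair as defined here; this is exactly what the explicit choice $\oG = G \times N$ above accomplishes, so no genuine obstacle arises.
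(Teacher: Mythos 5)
Your proposal is correct and follows essentially the same route as the paper: the forgetful map to $\Br(\crp{F})$ and the trivial-action section $\Br(\crp{F})\to\BrCliff(G,\crp{F})$ coming from Turull's splitting, with the key step being that a classical Schur algebra of $N$ lies in $\SC(G,\crp{F})$ via the Clifford pair $(\theta,\kappa)$ with $\oG=N\times G$ and $\kappa$ the projection, $G$ acting trivially on the quasihomogeneous module. The paper's proof packages the conclusion the same way, so no genuine difference beyond your slightly more explicit split-exact-sequence bookkeeping.
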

  \begin{proof}
    The homomorphism 
    $\beta\colon\BrCliff(G,\crp{F})\to \Br(\crp{F})$
    maps $\SC(G,\crp{F})$ into $\mathcal{S}(\crp{F})$.
    
    Conversely, there is a homomorphism
    $\Br(\crp{F}) \to \BrCliff(G,\crp{F})$
    which is induced by viewing a central simple $\crp{F}$\nbd algebra
    as a $G$\nbd algebra with trivial action of $G$.  
    This homomorphism shows that
    $\BrCliff(G,\crp{F})$ is a direct product of
    $\Br(\crp{F})$ and some other group.
    The other group is isomorphic to
     the second cohomology group $H^2(G, \crp{F}^*)$
     by the results of Turull mentioned before.
    
    To finish the proof, it remains to show that
    the homomorphism
    $\Br(\crp{F}) \to \BrCliff(G,\crp{F})$
    maps the Schur group $\mathcal{S}(\crp{F})$
    into the Schur-Clifford group
    $\SC(G,\crp{F})$.    
    Suppose that $\phi$ is the character of some group $N$
    with $\crp{F}(\phi)=\crp{F}$, so that
    $\phi$ defines an element $\llbracket \phi \rrbracket$ 
    of the Schur subgroup 
    $\mathcal{S}(\crp{F})$.
    More precisely, there is an $\crp{F}N$-module $V$
    whose character is a multiple of $\phi$,
    and then $\enmo_{\crp{F}N}(V)$ is a central simple
    $\crp{F}$\nbd algebra whose equivalence class
    defines an element $\llbracket \phi \rrbracket$.
    
    Let $\oG = N\times G$
    and $\kappa\colon \oG \to G$ be the projection on the second component.
    We identify $N$ with the kernel of $\kappa$.
    Then $(\phi, \kappa)$ is a Clifford pair over
    $G$ with
    $\beta \brcls{\phi}{\kappa}{\crp{F}} = 
     \llbracket \phi \rrbracket$.
    It is easy to see that the map
    $\llbracket \phi \rrbracket \mapsto
     \brcls{\phi}{\kappa}{\crp{F}}$ 
     is the restriction of the map
    $\Br(\crp{F})\to \BrCliff(G,\crp{F})$ 
     defined above.    
    The result follows.
  \end{proof}
  Given a Clifford pair $(\theta,\kappa)$ over $G$,
  on can construct directly a cocycle in $Z^2(G,\crp{F}^*)$,
  and then show that its cohomology class depends only on
  $\brcls{\theta}{\kappa}{\crp{F}}$.
  This yields an alternative proof of the proposition.
   
  The subgroup $A\leq H^2 (G,\crp{F})$ occurring in the last proposition
  has been studied by Dade~\cite{dade74}.
  He proved that if $m$ is the exponent of the group $A$,
  then $\crp{F}$ contains a primitive $m$\nbd th root of unity.
  For the Schur subgroup of the Brauer group, 
  the same result is true by a result of 
  Benard-Schacher~\cite[Proposition~6.2]{Yamada74}.
  Thus we have:
  \begin{cor}\label{c:scgroupexp}
    Suppose that $G$ acts trivially on the field\/ $\crp{F}$
    and let $m=\exp(\SC(G,\crp{F}))$.
    Then
    $\crp{F}$ contains a primitive $m$\nbd th root of unity.
  \end{cor}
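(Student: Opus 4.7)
The plan is to combine the previous proposition, which splits $\SC(G,\crp{F})$ as a direct product, with the two cited results of Benard--Schacher and Dade applied to the two factors.

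First I would write $\SC(G,\crp{F}) \iso \mathcal{S}(\crp{F}) \times A$ with $A \leq H^2(G,\crp{F}^*)$ as given by the preceding proposition. Setting $m_1 = \exp \mathcal{S}(\crp{F})$ and $m_2 = \exp A$, the exponent of a direct product is the least common multiple of the exponents of the factors, so
\[ m = \exp \SC(G,\crp{F}) = \operatorname{lcm}(m_1, m_2). \]

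Next, by the Benard--Schacher theorem (cited above as \cite[Proposition~6.2]{Yamada74}), the field $\crp{F}$ contains a primitive $m_1$-th root of unity. By Dade's theorem \cite{dade74} applied to $A \leq H^2(G,\crp{F}^*)$, the field $\crp{F}$ contains a primitive $m_2$-th root of unity.

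Finally, I would invoke the standard fact that the group of roots of unity in any field is cyclic (every finite subgroup of $\crp{F}^*$ is cyclic). Hence if $\crp{F}^*$ contains elements of orders $m_1$ and $m_2$, it contains an element of order $\operatorname{lcm}(m_1,m_2) = m$, i.e.\ a primitive $m$-th root of unity. There is no real obstacle here: the whole proof is a three-line assembly of the preceding proposition with the two quoted exponent theorems, so the only care needed is in handling the lcm via cyclicity of finite multiplicative subgroups of a field.
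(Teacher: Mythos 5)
Your proof is correct and is essentially the paper's own argument: the paper likewise combines the decomposition $\SC(G,\crp{F})\iso \mathcal{S}(\crp{F})\times A$ from the preceding proposition with the Benard--Schacher theorem for $\mathcal{S}(\crp{F})$ and Dade's theorem for $A\leq H^2(G,\crp{F}^*)$. Your explicit handling of the exponent as $\operatorname{lcm}(m_1,m_2)$ via cyclicity of finite subgroups of $\crp{F}^*$ only spells out a detail the paper leaves implicit.
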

  Schmid~\cite{schmid88} shows how to associate a 
  cohomology class to a semi-invariant Clifford pair 
  $(\theta,\kappa)$ over $\crp{F}$ 
  in a slightly more  general situation.
  (The assumption is that the Schur index of $\theta$ over $\crp{F}$
  and the index $\abs{G:G_{\theta}}$ of the inertia group of $\theta$ 
  in $G$ are coprime.)
  Moreover, Schmid shows that $\crp{F}(\theta)$ contains 
  a primitive $m$\nbd th root of unity if this 
  cohomology class has order $m$~\cite[Theorem~7.3]{schmid88}.
  We conclude this paper with the conjecture that 
  Corollary~\ref{c:scgroupexp} is true for all
  Schur-Clifford subgroups:
  \begin{conj}
    Let $\crp{E}\subseteq \compl$ be a field on which the group $G$ acts,
    and suppose $\SC(G,\crp{E})\neq \emptyset$.
    If $m$ is the exponent of $\SC(G,\crp{E})$,
    then $\crp{E}$ contains a primitive $m$\nbd th root of unity.
  \end{conj}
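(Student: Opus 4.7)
The plan is to reduce the conjecture to the two cases already settled in this paper, namely Corollary~\ref{c:scgroupexp} for the trivial action of $G$ (which itself incorporates the Benard--Schacher theorem for the classical Schur subgroup), by means of the restriction--corestriction machinery of Sections~\ref{sec:res} and~\ref{sec:cores}.

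First I would reduce to the case $Z=\crp{E}$ is a field via Theorem~\ref{t:reductofields}; that isomorphism preserves exponents. Since $\SC(G,\crp{E})$ is an abelian torsion group, it suffices to fix a prime $p$ and show that every $a \in \SC(G,\crp{E})$ of order $p^k$ forces $\crp{E}$ to contain a primitive $p^k$\nbd th root of unity. Picking a Sylow $p$\nbd subgroup $P\leq G$, the last corollary of Section~\ref{sec:cores} supplies an injection $\SC(G,\crp{E})_p \hookrightarrow \SC(P,\crp{E})_p$, so without loss of generality $G=P$ is a $p$\nbd group.

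Let $N\nteq P$ denote the kernel of the action of $P$ on $\crp{E}$, so that $P/N$ is a $p$\nbd subgroup of $\Gal(\crp{E}/\crp{E}^P)$. When $N=P$ the action is trivial and Corollary~\ref{c:scgroupexp} concludes. When $N\lneq P$, the index $[P:N]$ is a positive power of $p$, and Theorem~6.3 of~\cite{ladisch15a}, which asserts $\Cores^P_N\circ\Res^P_N = [P:N]$, does not yield injectivity of $\Res^P_N$ on the $p$\nbd part: restriction to $N$ can drop the order of $a$ by up to the factor $[P:N]$. The best naive bound one obtains from restriction is $\exp\SC(P,\crp{E})_p \leq [P:N] \cdot \exp\SC(N,\crp{E})_p$, which falls short of the conjectural bound by a factor of $[P:N]$.

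Closing this gap is the main obstacle. My strategy here would be to represent $a$ concretely as $\brcls{\theta}{\kappa}{\crp{F}}$ for a Clifford pair with $\Ker\kappa$ cyclic\nbd by\nbd abelian (using the forthcoming~\cite{ladisch14c}), and to attach to such $(\theta,\kappa)$ an explicit cocycle class in $H^2(P,\crp{E}^*)$ of order $p^k$ in the spirit of Schmid~\cite{schmid88}. The hard step is extending Schmid's analysis past his coprimality hypothesis between the Schur index of $\theta$ and $[P:P_\theta]$, which typically fails here since both indices are powers of the same prime $p$. Overcoming this obstruction---perhaps by carefully choosing a representative in the $\Gamma\times P$\nbd orbit of $\theta$, or by enlarging $\crp{F}$ inside $\crp{E}^P$ via Proposition~\ref{p:fieldindep_fields} so that Schmid's coprimality condition is restored---is the real content of the conjecture.
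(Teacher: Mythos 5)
This statement is stated in the paper as a \emph{conjecture}; the paper offers no proof of it, so there is nothing to match your argument against, and indeed your proposal does not close the question either. Your preliminary reductions are sound and use exactly the tools the paper provides: the reduction to a field is vacuous here (the conjecture already assumes $Z=\crp{E}$ is a field), and the reduction to a Sylow $p$\nbd subgroup $P$ via the restriction--corestriction corollary of Section~\ref{sec:cores} is legitimate, since a field is simple as a $P$\nbd algebra and $\Res^G_P$ is injective on $p$\nbd torsion by Theorem~6.3 of \cite{ladisch15a}. Likewise, when $P$ acts trivially on $\crp{E}$, Corollary~\ref{c:scgroupexp} finishes that case.

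The genuine gap is the one you yourself flag: the case of a $p$\nbd group acting nontrivially on $\crp{E}$ is not handled, and this is precisely the content of the conjecture, not a technical remainder. Restricting to the kernel $N$ of the action loses a factor of $[P:N]$ in the exponent, as you note, and the proposed repair via Schmid \cite{schmid88} does not go through: his coprimality hypothesis between the Schur index of $\theta$ and $[P:P_\theta]$ fails systematically when everything is a power of the same prime $p$, and no replacement argument is given. There is a further unaddressed obstruction: the decomposition $\BrCliff(G,\crp{F})\iso \Br(\crp{F})\times H^2(G,\crp{F}^*)$, which underlies Corollary~\ref{c:scgroupexp} (Benard--Schacher on one factor, Dade on the other), is only available for trivial action; for nontrivial action one has a map $\BrCliff(P,\crp{E})\to\Br(\crp{E})$ with kernel $H^2(P,\crp{E}^*)$ but no natural splitting, so an element $a$ of order $p^k$ need not yield either a Schur-group class or a cohomology class of order $p^k$ to which the known root-of-unity theorems apply. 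The appeal to the forthcoming \cite{ladisch14c} for cyclic-by-abelian kernels supplies a convenient representative $(\theta,\kappa)$ but does not by itself produce the required cocycle of full order. In short, your proposal is a reasonable reduction strategy plus an honest statement of the open problem, but it is not a proof of the conjecture.
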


%
\printbibliography   
%

\end{document}